\documentclass[12pt]{amsart}

\usepackage[english]{babel}
\usepackage{amsmath,amssymb,amsthm}
\usepackage{amsfonts}
\usepackage{hyperref}
\usepackage[centering]{geometry}
\usepackage{tikz-cd}
\usepackage{pb-diagram}

\usepackage{verbatim,amscd,amsmath,amsthm,amstext,amssymb,amsfonts,latexsym,lscape,rawfonts}

\usepackage[arrow,matrix,curve]{xy}
\usepackage{xcolor}

\newcommand{\di}{\operatorname{diag}}
\newcommand{\tr}{\operatorname{tr}}

\newtheorem{theorem}{Theorem}[section]
\newtheorem{corollary}[theorem]{Corollary}
\newtheorem{lemma}[theorem]{Lemma}
\newtheorem{proposition}[theorem]{Proposition}
\theoremstyle{definition}
\newtheorem{definition}{Definition}

\newtheorem{remark}{Remark}
\newtheorem*{acknow}{Acknowledgments}

\newtheorem*{example}{Example}

\numberwithin{equation}{section}

\newcommand{\R}{\mathbb{R}}
\newcommand{\C}{\mathbb{C}}
\newcommand{\D}{\mathbb{D}}
\newcommand{\E}{\mathbb{E}}

\newcommand{\St}{\mathbb{S}}
\newcommand{\F}{\mathbb{F}}
\newcommand{\Z}{\mathbb{Z}}

\newcommand{\f}{\mathfrak f}

\newcommand{\SU}{{\rm SU}(3)}

\begin{document}

\title[Minimal Lagrangian surfaces in $\mathbb{C}P^2$ : Part I]
{Minimal Lagrangian surfaces in $\mathbb{C}P^2$ via the loop group method   Part  I: the contractible case}
\author{Josef F.~Dorfmeister}
\address{Fakult\"{a}t F\"{u}r Mathematik, TU-M\"{u}nchen, Boltzmann Str. 3,
D-85747, Garching, Germany}
\email{dorfm@ma.tum.de}
\author{Hui Ma}
\address{Department of Mathematical Sciences, Tsinghua University,
Beijing 100084, P.R. China} \email{ma-h@tsinghua.edu.cn}
\thanks{2010 {\it Mathematics Subject Classification}. Primary 
 53C42; Secondary 53D12}
\thanks{{\it Keywords.} Lagrangian submanifold, minimal surface, Loop group method}

\maketitle

\begin{abstract} In this paper, we employ  the loop group method to study the construction of minimal Lagrangian surfaces in the complex projective plane for which the surface is contractible. We present several new classes of minimal Lagrangian surfaces in $\C P^2$.
\end{abstract}

\section{Introduction}

Minimal Lagrangian surfaces in the complex projective plane $\mathbb{C}P^2$ endowed with the Fubini-Study metric are of great interest from the point of view of differential geometry, symplectic geometry and mathematical physics
(For instance, see \cite{CU94, Sharipov, MM, Haskins_Am04, McIn, Mironov, Haskins_Inv04}). 
They give rise to  local models of singular special Lagrangian $3$-folds in Calabi-Yau 3-folds, hence play an important role in the development of mirror symmetry (\cite{Joyce}). 

The Gauss-Codazzi equations for minimal Lagrangian surfaces in $\mathbb{C}P^2$
are given by
\begin{equation*}\label{eq:mLi}
\begin{split}
u_{z\bar z}&=e^{-2u}|\psi|^2-e^{u},\\
\psi_{\bar z}&=0,
\end{split}
\end{equation*}
where $g=2e^{u}dzd\bar{z}$ is the induced Riemannian metric on the Riemann surface and $\psi dz^3$ is a cubic differential defined on the surface. 

Using the fact that a Riemann surface of genus zero has no non-trivial holomorphic differentials, one can see that any minimal Lagrangian surface of genus zero in $\mathbb{C}P^2$ is totally geodesic, hence is the standard  immersion of $S^2$ in $\mathbb{C}P^2$ (\cite{Yau1974, Naitoh-Takeuchi82}). 
Any minimal Lagrangian immersed surface of genus one in $\mathbb{C}P^2$ can be  constructed in terms of algebraically completely integrable systems \cite{Sharipov, MM, McIn}. 


The objective of the present paper is to apply the loop group method \cite{DPW} to the construction of minimal Lagrangian surfaces in $\mathbb{C}P^2$.
The paper is organized as follows: in Section \ref{sec:Pre}, we recall the general theory  of loop group method for harmonic maps and basic set-ups for minimal Lagrangian surfaces in $\C P^2$. Then we give Wu's formula to describe the normalized potential of minimal Lagrangian surfaces. In Section \ref{sec:vacuum}, we characterize the vacuum solutions. Then in Section \ref{Sec:symmetries} we investigate minimal Lagrangian surfaces admitting some symmetries.  In Section \ref{sec:finiteorder}, we give a complete characterization of all full minimal Lagrangian surfaces with finite order symmetries with a fixed point or without fix points.

In all classes of integrable surfaces two types of one-parameter groups 
can be considered. At one hand these are the one-parameter groups of 
extrinsic isometries, i.e. one-parameter groups of isometries of the space the surfaces are contained in, and, on the other hand, one-parameter groups of isometries of the induced metric.
These two types of transformations were discussed first for surfaces of constant mean curvature in $\R^3$ by Smyth \cite{Smyth}.
For a given minimal Lagrangian surface $f: M \rightarrow \C P^2$ one considers, more generally now, as extrinsic isometries  one parameter groups $(\gamma_t, R_t)$ 
with $\gamma_t$ a one-parameter group of automorphisms of the Riemann surface $M$ and $R_t$ a one-parameter group of isometries of $\C P^2$ satisfying $f(\gamma_t.z) = R_t f(z)$. It was shown (see  \cite{DoMaNewLook}, \cite{DoMaExplicit}) that up to 
normalizations only two types of such one-parameter groups occur: 
translationally equivariant surfaces and rotationally equivariant surfaces.
In the case of minimal Lagrangian surfaces it follows that the domain of definition is all of $\C$ or $S^2$. The rotationally equivariant surfaces all yield minimal Lagrangian spheres and real projective spaces.
In  \cite{DoMaNewLook}, \cite{DoMaExplicit} also an explicit construction of all translationally equivariant minimal Lagrangian surfaces is given, extending the result of \cite{CU94}. 
Moreover, translationally equivariant cylinders have been discussed there and it was shown, how one can recover the results on tori of \cite{CU94} by the methods of the present paper.
For the second type of one-parameter groups, a coarse classification of  minimal Lagrangian surfaces with one-parameter groups of self-isometries of $M$ relative to the induced metric is given in Theorem \ref{Th6.5}.
No literature about this is known to the authors.

In Section \ref{Sec:entireradiallysym}, we  discuss radially symmetric,  minimal Lagrangian surfaces, as these surfaces are commonly known and restrict from thereon to the case of immersions defined on all of $\C$. 
In this paper we only discuss entire surfaces. As a matter of fact, in Section \ref{Sec:radially symmetric} we give the construction of entire radially symmetric minimal Lagrangian immersions into $\C P^2$ with constant normalized potential. This contains the simplest case and still seems to be new.
 

\section{Preliminaries}
\label{sec:Pre}

We first recall the loop group method for the construction of primitive harmonic maps of a Riemann surface into a $k$-symmetric space \cite{DPW}.
\subsection{Primitive harmonic maps}

As pointed out above, minimal Lagrangian spheres in $\C P^2$ are completely known. We will therefore assume without loss of generality that the simply-connected cover of any Riemann surface considered in this paper is non-compact, whence contractible and we will usually assume that the Riemann surface is the complex plane, the upper half-plane or the open unit disk. 
We will always write $\D$ for such a Riemann surface.

Let $G$ be a  compact real semisimple Lie group with an automorphism $\sigma: G\rightarrow G$ of finite order $k\geq 2$. Let $G^{\sigma}=\mathrm{Fix}(G,\sigma)$ and $G^{\sigma}_0$ the identity component of $G^{\sigma}$. A real homogeneous space $G/K$ is called a $k$-symmetric space, if
$(G)^{\sigma}_0\subset K\subset G^{\sigma}$.
Then $\sigma$ induces an automorphism, also denoted by $\sigma$, of the Lie algebra $\mathfrak{g}$  of $G$ as well as of their complexifications $G^\C$ and 
$\mathfrak{g}^\C.$ 

The eigenspace decomposition of $\mathfrak{g}^\C$ gives a reductive decomposition
of $\mathfrak{g}$ 
\begin{equation}\label{eq:gkm}
\mathfrak{g}=\mathfrak{k}\oplus \mathfrak{m}, 
\end{equation}
where $ \mathfrak{m}^\C =\sum_{l\in \mathbb{Z}_k\backslash\{0\}}\mathfrak{g}^\C_l$ and $\mathfrak{g}^\C_l$ is the $\epsilon^l$-eigenspace of  $\sigma$ for $\epsilon=e^{2\pi i/k}$.

Let  $f: \D \rightarrow G/K$ be a smooth map with frame (or lift) $\F:\D \rightarrow G$ so that $f=\pi\circ \F$, where $\pi: G\rightarrow G/K$ is the canonical projection.
Such a frame always exists globally.

Then the $\mathfrak{g}$-valued $1$-form $\alpha=\F^{-1}d\F$ has a split $\alpha=\alpha_{\mathfrak k}+\alpha_{\mathfrak m}$ according to \eqref{eq:gkm}. 
Write $\alpha_{\mathfrak m}=\alpha_{\mathfrak m}^{\prime}+\alpha_{\mathfrak m}^{\prime\prime}$ 
according to the type decomposition $TM^{\mathbb C}=T^{\prime}M\oplus T^{\prime\prime}M$ for $M = \D$. Then we have the well known result:
A map $f: \D\rightarrow G/K$ is harmonic if  and only if
\begin{eqnarray*}
&&d\alpha_{\mathfrak m}^{\prime}+[\alpha_{\mathfrak k}\wedge \alpha_{\mathfrak m}^{\prime}]=0,\\
&& d\alpha_{\mathfrak k}+\frac{1}{2} [\alpha_{\mathfrak k}\wedge \alpha_{\mathfrak k}]+[\alpha_{\mathfrak m}^{\prime}\wedge \alpha_{\mathfrak m}^{\prime\prime}]=0.
\end{eqnarray*}

A map $f: \D\rightarrow G/K$ is called \emph{primitive}  if $\alpha_{\mathfrak m}^{\prime}$ takes values in $\mathfrak{g}_{-1}$. 
Remark that when $k>2$, a primitive map is automatically harmonic 
\cite{BuP1994},  Theorem 3.6. (Also see \cite{Black}.) 

Combining these two cases, we say that $f$ is a primitive harmonic map
if either   $k=2$ and $f$ is harmonic, or  $k>2$ and $f$ is primitive.

Now if $F$ frames a primitive harmonic map, then 
$$\alpha_{\lambda}=\lambda^{-1}\alpha_{\mathfrak m}^{\prime}+\alpha_{\mathfrak k}+\lambda \alpha_{\mathfrak m}^{\prime\prime}$$  
is a family of flat connections for each $\lambda\in \mathbb{C}^{*}$.
Therefore, by  solving $\F_{\lambda}^{-1}d \F_{\lambda}=\alpha_{\lambda}$,
we obtain the family $\F_{\lambda}$ of \emph{extended frames}.
We will always interpret  the family  $\F_{\lambda}$
as a map from $\D$ into the twisted loop group
$$\Lambda G_{\sigma}=\{g: S^1\rightarrow G \text { smooth } | \, g(\epsilon \lambda)=\sigma (g(\lambda))\}.$$
Conversely, given an extended frame $\F_{\lambda}$, then $\F_{\lambda}$ frames a primitive harmonic map for each $\lambda\in S^1$. 

Using the point $0 \in \D$ as a base point (which is what we usually do, unless we state the opposite explicitly),  the space of primitive harmonic maps $f: \D\rightarrow G/K$ with $f(0)=eK$ can be identified with the space of extended frames 
$\F: \D\rightarrow \Lambda G_{\sigma}$ with $\F(0)=k\in K$ modulo gauge transformations $H: \D\rightarrow K$ via the relation $f=\pi\circ \F_{\lambda=1}$.  We will
usually implement this bijection by choosing without loss of generality $\F(0,\lambda) = e$.
Thus constructing primitive harmonic maps turns equivalently into the construction of an extend frame from a contractible Riemann surface into a twisted loop group.
The crucial property of the loop group method introduced in \cite{DPW} is that any primitive harmonic map defined on a contractible Riemann surface $\D$ can be obtained from some \emph{potential}, which is a $1$-form on $\D$, holomorphic in $\lambda \in \C^*$, and can be assumed to be holomorphic in $z \in \D$.  
In this case the potential may be an infinite power series in $\lambda$.
However, by weakening the assumption to \emph{meromorphic in $z$}
 one can assume that the potential only contains one power of $\lambda$,  namely $\lambda^{-1}$.

\subsection{Loop groups}
 
Let's introduce some notation. By $\mathbf{D}$ we
denote the interior of the unit disk $\mathbf{D}=\{\lambda\in \mathbb{C}| |\lambda|<1\}$ and by $\E$  the exterior of the unit disk, 
$\E=\{\lambda\in \mathbb{C}| |\lambda|>1\} \cup {\infty}$
in $S^2$.

Since we are primarily interested in groups and loop groups related to $SL(3, \C)$, we  write down the conditions below for simply-connected complex (matrix) Lie groups and compact stabilizer groups $K$ only.

Set 
\begin{eqnarray*}
\Lambda G_{\sigma}^{\mathbb C}&=&\{ g: S^1\rightarrow G^{\mathbb C} | g
\text{  has finite Wiener norm},\, g(\epsilon \lambda)=\sigma g(\lambda)\}, \\
\Lambda^{+} G^{\mathbb C}_{\sigma}&=&\{g\in \Lambda G^{\mathbb{C}}_{\sigma} | \, g \text{ extends holomorphically to } \mathbf{D}, g(0)\in K^{\mathbb C}\},\\
\Lambda^{+}_B G_{\sigma}^{\mathbb C}&=&\{ g\in \Lambda^{+} G_{\sigma}^{\mathbb C} |\, 
g(0)\in B \},\\
\Lambda^{-} G^{\mathbb C}_{\sigma}&=&\{ g\in \Lambda G^{\mathbb{C}}_{\sigma} | \, g \text{ extends holomorphically to } \mathbb{E}, g(\infty)\in K^{\mathbb C}\},\\
\Lambda^{-}_{*} G^{\mathbb C}_{\sigma}&=&\{ g\in \Lambda^{-} G^{\mathbb C}_{\sigma} | \, g(\infty)=e\},
\end{eqnarray*}
where $K^{\mathbb C} = KB$ is a fixed Iwasawa decomposition of $K^{\mathbb C}$.

We will always equip $\Lambda G_{\sigma}^{\mathbb C}$ with 
the Wiener topology of absolute convergence of the Fourier coefficients. Then the group
$\Lambda G^{\mathbb C}_{\sigma}$ becomes a complex Banach Lie group  with Lie algebra
\begin{equation*}
\Lambda \mathfrak{g}^{\mathbb{C}}_{\sigma}:=\{\xi: S^1 \rightarrow \mathfrak{g}^{\mathbb C}|
 \sigma(\xi(\lambda))=\xi(\epsilon \lambda)\}.
\end{equation*}

If $\xi \in \Lambda \mathfrak{g}^{\mathbb C}_{\sigma}$, its Fourier decomposition is
$$\xi=\sum_{l \in \mathbb{Z}} \lambda^l\xi_l, \quad \xi_l \in \mathfrak{g}_l$$
and the Lie subalgebras of $\Lambda \mathfrak{g}^{\mathbb C}_{\sigma}$ corresponding to the subgroups
$\Lambda G_{\sigma}$, $\Lambda^{+} G^{\mathbb C}_{\sigma}$ and $\Lambda^{-} G^{\mathbb C}_{\sigma}$ are
\begin{eqnarray*}
\Lambda \mathfrak{g}_{\sigma}&=&\Lambda \mathfrak{g}^{\mathbb C}_{\sigma}\cap \mathfrak{g},
\\
\Lambda^{+}\mathfrak{g}^{\mathbb C}_{\sigma}&=&\{\xi\in \Lambda \mathfrak{g}^{\mathbb C}_{\sigma} |
\xi_l=0 \text{ for } l<0, \xi_0\in \mathfrak{k}\},\\
\Lambda^{-}\mathfrak{g}^{\mathbb C}_{\sigma}&=&\{\xi\in \Lambda \mathfrak{g}^{\mathbb C}_{\sigma} |
\xi_l=0 \text{ for } l>0, \xi_0\in \mathfrak{k}\}.
\end{eqnarray*}

Similar conditions hold for the remaining two Lie algebras.

We finish this subsection by quoting the two splitting theorems which are of 
crucial importance for the application of the loop group method.

The first of these theorems is due to Birkhoff, who invented it for the loop group of $GL(n,\C)$ in an attempt to solve Hilbert's 21'{st} problem. 

\begin{theorem}[Birkhoff Decomposition]
Let $G$ be a compact real Lie group. Then the multiplication $\Lambda^{-}_{*} G^{\mathbb C}_{\sigma} \times \Lambda^{+} G^{\mathbb C}_{\sigma} \rightarrow \Lambda G^{\mathbb C}_{\sigma}$ is a 
complex analytic diffeomorphism onto the open, connected  and dense subset 
$\Lambda^{-}_{*} G^{\mathbb C}_{\sigma}\cdot \Lambda^{+} G^{\mathbb C}_{\sigma}$ of  $\Lambda G^{\mathbb C}_{\sigma},$
called the big (left Birkhoff) cell. 

In particular, if $g\in \Lambda G^{\mathbb C}_{\sigma}$ is contained in the big cell,  then
$g$ has a unique decomposition $g=g_{-}g_{+}$, where $g_{-}\in \Lambda_{*}^{-} G_{\sigma}^{\mathbb C}$ 
and $g_{+}\in \Lambda^{+}G^{\mathbb C}_{\sigma}$.
\end{theorem}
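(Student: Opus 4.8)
The plan is to reduce the statement to the classical Birkhoff factorization for $GL(n,\mathbb{C})$-valued loops and then to cut down successively to the subgroup $G^{\mathbb{C}}$ and to the $\sigma$-twisted loops. First I would fix a faithful finite-dimensional representation $\rho\colon G^{\mathbb{C}}\hookrightarrow GL(n,\mathbb{C})$, which exists because $G^{\mathbb{C}}$ is a complex semisimple, hence linear algebraic, group; this realizes $\Lambda G^{\mathbb{C}}_{\sigma}$ as a subgroup of the untwisted loop group of $GL(n,\mathbb{C})$-valued loops of finite Wiener norm. For the ambient group I would invoke the Birkhoff--Grothendieck theorem: every such loop $g$ factors as $g=g_{-}\,\Delta\,g_{+}$ with $g_{\pm}\in\Lambda^{\pm}GL(n,\mathbb{C})$ and $\Delta=\operatorname{diag}(\lambda^{k_1},\dots,\lambda^{k_n})$, the integers $k_i$ being the partial indices; geometrically $g$ is the clutching function of a holomorphic vector bundle over $S^2=\mathbf{D}\cup\E$ and the $k_i$ are its splitting type. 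By definition the big cell is the locus where all $k_i=0$, i.e.\ where no middle factor $\Delta$ is needed.

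Next I would establish uniqueness on the big cell, which simultaneously yields injectivity of the multiplication map and is the engine for the two reduction steps. If $g=g_{-}g_{+}=\tilde g_{-}\tilde g_{+}$ are two factorizations of the prescribed type, then $\tilde g_{-}^{-1}g_{-}=\tilde g_{+}g_{+}^{-1}$ is holomorphic on $\E$ read from the left-hand side and holomorphic on $\mathbf{D}$ read from the right-hand side, the two agreeing on $S^1$; by Liouville the resulting loop is constant, and the normalization $g_{-}(\infty)=\tilde g_{-}(\infty)=e$ forces this constant to be $e$, so $g_{-}=\tilde g_{-}$ and $g_{+}=\tilde g_{+}$.

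The main obstacle is to show that the factors produced inside $GL(n,\mathbb{C})$ can in fact be taken in $G^{\mathbb{C}}$, since Liouville alone keeps us only in the ambient group. Here I would use that $G^{\mathbb{C}}$ is the common zero locus of a family of regular (matrix-coefficient) functions on $GL(n,\mathbb{C})$ and exploit the rigidity afforded by the uniqueness above together with the reductivity of $G^{\mathbb{C}}$ to conclude $g_{\pm}(\lambda)\in G^{\mathbb{C}}$; this is the step that genuinely requires $G$ to be the compact real form of a semisimple group rather than an arbitrary linear group, and I expect it to be the most delicate point. Once the factorization lives in $G^{\mathbb{C}}$, the $\sigma$-twisting is immediate from uniqueness: applying $\sigma^{-1}$ together with the substitution $\lambda\mapsto\epsilon\lambda$, which preserves $\mathbf{D}$, $\E$, $0$ and $\infty$, to $g=g_{-}g_{+}$ and using $\sigma^{-1}(g(\epsilon\lambda))=g(\lambda)$ produces a second factorization of the same type; uniqueness then gives $\sigma^{-1}(g_{\pm}(\epsilon\lambda))=g_{\pm}(\lambda)$, so the factors are themselves $\sigma$-twisted and lie in $\Lambda^{-}_{*}G^{\mathbb{C}}_{\sigma}$ and $\Lambda^{+}G^{\mathbb{C}}_{\sigma}$.

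Finally I would record the topological and analytic properties. Complex analyticity of the multiplication map and of its inverse I would obtain from the inverse function theorem in the Banach category: its differential at the identity is the splitting of $\Lambda\mathfrak{g}^{\mathbb{C}}_{\sigma}$ into its strictly negative and its non-negative Fourier modes, a topological direct sum by an elementary coefficient count, so the map is a local biholomorphism, and combined with the injectivity above it is a biholomorphism onto its image. Openness of the big cell follows because the vanishing of all partial indices is equivalent to the invertibility of an associated Toeplitz operator, an open condition, together with the upper semicontinuity of the indices; the complementary strata $\{\Delta\neq I\}$ have positive codimension, which yields density. Connectedness is then clear, since via the diffeomorphism the big cell is homeomorphic to the product $\Lambda^{-}_{*}G^{\mathbb{C}}_{\sigma}\times\Lambda^{+}G^{\mathbb{C}}_{\sigma}$ of two connected factors, the second retracting onto the connected group $K^{\mathbb{C}}$.
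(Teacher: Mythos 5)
First, a point of reference: the paper does not prove this theorem at all --- it is quoted as a known result, attributed to Birkhoff for $GL(n,\C)$ and taken in its twisted form from \cite{DPW} (see also \cite{PS}). So there is no in-paper argument to compare yours against, and your outline has to stand on its own. Most of it does: the reduction to a matrix group, the Liouville/Fourier-coefficient argument for uniqueness, the deduction of the $\sigma$-twisting of the factors from uniqueness, the local biholomorphism via the splitting of $\Lambda\mathfrak{g}^{\mathbb C}_\sigma$ into strictly negative and non-negative modes, the Toeplitz-operator criterion for openness, and the contraction of the two factors for connectedness are all the standard and correct steps.

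There are, however, two genuine gaps. The first is the step you yourself flag as delicate, the descent of the factorization from $GL(n,\C)$ to $G^{\mathbb C}$: ``exploit the rigidity afforded by uniqueness together with reductivity'' is not yet an argument. Knowing that $G^{\mathbb C}$ is the zero locus of regular functions $f_i$ only gives $f_i\bigl(g_-(\lambda)g_+(\lambda)\bigr)=0$, which says nothing about $f_i\bigl(g_-(\lambda)\bigr)$. The missing idea is that a reductive closed subgroup of $GL(n,\C)$ is \emph{observable}, i.e.\ equal to the stabilizer $\{A : \rho(A)v=v\}$ of a single vector $v$ in some finite-dimensional representation $\rho$ of $GL(n,\C)$. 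Then $\rho(g_+(\lambda))v=\rho(g_-(\lambda))^{-1}v$ on $S^1$, the right-hand side extends holomorphically to $\E$ with value $v$ at $\infty$ and the left-hand side to $\mathbf{D}$, so your Liouville argument applies to this vector-valued function and yields $\rho(g_\pm(\lambda))v\equiv v$, hence $g_\pm(\lambda)\in G^{\mathbb C}$. (Alternatively one bypasses $GL(n,\C)$ entirely and uses Grothendieck's theorem that holomorphic principal $G^{\mathbb C}$-bundles over $\mathbb{P}^1$ reduce to a maximal torus.) The second gap is the density claim: positive codimension of the lower Birkhoff strata \emph{in $\Lambda GL(n,\C)$} does not imply that the big cell is dense in the subgroup $\Lambda G^{\mathbb C}_\sigma$, since a priori a small ambient stratum could meet the subgroup in a set with nonempty interior. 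One needs the Birkhoff stratification of $\Lambda G^{\mathbb C}_\sigma$ itself, indexed by (the $\sigma$-fixed part of) the affine Weyl group, together with the finite positive codimension of its non-open strata; this is precisely where \cite{PS} and \cite{DPW} do the real work, and it cannot be inherited from the ambient group for free.
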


The second crucial loop group splitting theorem is the following

\begin{theorem}[Iwasawa decomposition]

Let $G$ be a real compact Lie group. Then the multiplication map 
$$\Lambda G_{\sigma}\times \Lambda^{+}_B G_{\sigma}^{\mathbb C} 
\rightarrow \Lambda G_{\sigma}^{\mathbb C}$$
is a real-analytic diffeomeorphism of Banach Lie groups.
\end{theorem}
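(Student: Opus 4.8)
The plan is to follow the classical strategy for loop-group Iwasawa decompositions, reducing first to the Lie-algebra level and then promoting a local statement to a global one. Write $\tau$ for the conjugation of $\mathfrak g^{\C}$ (and of $G^{\C}$) fixing the compact real form $\mathfrak g$, chosen to commute with $\sigma$ so that $\tau(\mathfrak g^{\C}_l)=\mathfrak g^{\C}_{-l}$; then $\Lambda\mathfrak g_{\sigma}$ is the fixed-point set of $\xi\mapsto\tau\circ\xi$ on $S^1$, which on Fourier modes reads $\xi_l=\tau(\xi_{-l})$. The first step is to establish the topological direct sum
\[
\Lambda\mathfrak g^{\C}_{\sigma}=\Lambda\mathfrak g_{\sigma}\oplus\Lie(\Lambda^{+}_B G^{\C}_{\sigma}),
\]
where the second summand consists of $\beta=\sum_{l\geq 0}\lambda^l\beta_l$ with $\beta_0\in\mathfrak b=\Lie B$. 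Given $\xi=\sum_l\lambda^l\xi_l$, the components are forced mode by mode: the zeroth mode uses the finite-dimensional Iwasawa splitting $\mathfrak k^{\C}=\mathfrak k\oplus\mathfrak b$; for $l<0$ one must take $\eta_l=\xi_l$, and the reality relation then yields $\eta_m=\tau(\xi_{-m})$ and $\beta_m=\xi_m-\tau(\xi_{-m})$ for $m>0$. Since $\tau$ is an isometric involution and index reflection is bounded, the associated projections are continuous in the Wiener norm, so this is a genuine Banach-space direct sum.

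The differential of the multiplication map $\mu$ at the identity is exactly the addition map of this direct sum, hence a Banach-space isomorphism. By the inverse function theorem for Banach manifolds $\mu$ is a real-analytic local diffeomorphism near $(e,e)$, and translating by $\Lambda G_{\sigma}$ on the left and by $\Lambda^{+}_B G^{\C}_{\sigma}$ on the right spreads this over the whole product; thus $\mu$ is everywhere a local real-analytic diffeomorphism with open image. For injectivity it suffices to show $\Lambda G_{\sigma}\cap\Lambda^{+}_B G^{\C}_{\sigma}=\{e\}$. An element $g$ of this intersection is a loop into the compact group $G$ that extends holomorphically to $\mathbf D$ with $g(0)\in B$; setting $h(\lambda)=\tau(g(1/\bar\lambda))$ gives a holomorphic extension to $\E$ agreeing with $g$ on $S^1$ (there $1/\bar\lambda=\lambda$ and $\tau(g)=g$), so the entries of $g$ extend holomorphically to all of $S^2$ and are constant. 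Hence $g$ is a constant element of $G\cap B$, which is trivial because the solvable simply connected group $B$ contains no nontrivial compact subgroup; therefore $\mu$ is injective.

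The genuine obstacle is \textbf{surjectivity}: unlike the Birkhoff decomposition, whose image is only a dense cell, the image of $\mu$ must be all of $\Lambda G^{\C}_{\sigma}$. This global character is the loop analog of the fact that the Gram--Schmidt factorization $\GL=\U\cdot(\text{upper triangular})$ is defined everywhere, and it reflects that $\Lambda G_{\sigma}$ is a maximal compact-type subgroup. I would establish it via the Pressley--Segal Grassmannian model: realize $\Lambda G^{\C}_{\sigma}$ as bounded operators on the polarized Hilbert space $H=L^2(S^1,\C^n)=H_{+}\oplus H_{-}$, note that $\Lambda G_{\sigma}$ acts unitarily while $\Lambda^{+}_B G^{\C}_{\sigma}$ stabilizes the base point of the restricted Grassmannian, and exhibit an arbitrary loop as a unitary loop times a positive one by an infinite-dimensional Gram--Schmidt orthonormalization, carried out compatibly with the twisting $g(\epsilon\lambda)=\sigma g(\lambda)$. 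The crux is to prove that this process converges in the Wiener topology and that the two factors land in the correct twisted subgroups; equivalently, one shows the open image of $\mu$ is also closed, and connectedness of $\Lambda G^{\C}_{\sigma}$ then forces surjectivity.

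Finally, a bijection that is everywhere a local real-analytic diffeomorphism is automatically a global real-analytic diffeomorphism, so the three algebraic steps together with surjectivity yield the theorem. I expect the analytic convergence in the last step to be the main difficulty; in practice one would either run the orthonormalization argument directly and verify that the twisting is preserved at each stage, or invoke the twisted version of the Pressley--Segal theorem.
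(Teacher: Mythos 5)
The paper does not actually prove this statement: it records it and refers to Pressley--Segal \cite{PS} for the untwisted case and to \cite{DPW} for the twisted extension, so there is no internal argument to compare yours against. Your preparatory steps are sound: the mode-by-mode direct sum $\Lambda\mathfrak g^{\C}_{\sigma}=\Lambda\mathfrak g_{\sigma}\oplus\Lie(\Lambda^{+}_B G^{\C}_{\sigma})$ is correct and its projections are indeed Wiener-bounded, the inverse function theorem plus two-sided translation gives that $\mu$ is everywhere a local real-analytic diffeomorphism, and the Schwarz-reflection argument showing $\Lambda G_{\sigma}\cap\Lambda^{+}_B G^{\C}_{\sigma}$ consists of constants in $G\cap B=\{e\}$ is the standard and correct injectivity proof.

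The genuine gap is exactly where you flag it: surjectivity. Everything you prove would hold verbatim for the Birkhoff map $\Lambda^{-}_{*}G^{\C}_{\sigma}\times\Lambda^{+}G^{\C}_{\sigma}\to\Lambda G^{\C}_{\sigma}$, whose image is open and dense but \emph{not} all of $\Lambda G^{\C}_{\sigma}$; so the local-diffeomorphism-plus-injectivity package cannot by itself distinguish the two situations, and the assertion that the open image is ``also closed'' is precisely the global content of the theorem, not a formality one can defer. Neither of your two proposed completions is carried out: the convergence of the infinite-dimensional Gram--Schmidt in the Wiener (rather than $L^2$ or $H^{1/2}$) topology is a real analytic issue, and the compatibility with the twisting is asserted rather than checked. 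If you are willing to cite the untwisted Iwasawa decomposition (as the paper implicitly does), there is a much shorter route to the twisted statement that avoids redoing the Grassmannian argument: choose the Iwasawa data so that $\sigma(G)=G$, $\sigma(K)=K$, $\sigma(B)=B$, decompose a twisted loop $g=ub$ inside the untwisted groups, apply the automorphism $g(\lambda)\mapsto\sigma\bigl(g(\epsilon^{-1}\lambda)\bigr)$, and invoke uniqueness of the untwisted factorization to conclude that $u$ and $b$ are themselves $\sigma$-twisted. As written, your argument establishes that $\mu$ is an injective local diffeomorphism onto an open subgroup-coset neighborhood, but not the theorem.
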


This result is well known for untwisted loop groups (see, e.g. Pressley-Segal \cite{PS})  and was extended to the twisted setting in  \cite{DPW}.  

\subsection{The basic loop group method}
\label{subsection:loop method}

Let us start from some primitive harmonic map $f: \D \rightarrow G/K$ and let's consider an  extended frame  $\F: \D \rightarrow \Lambda G_{\sigma}$ of $f$. 
Unless stated otherwise we will always assume $\F(0,\lambda) = e$.

While the frame $\F$ satisfies a non-linear integrability condition, the objects we construct next trivially satisfy the integrability condition.
\vspace{2mm}

{\bf Construction 1: Holomorphic potentials}
\vspace{2mm}

For any extended frame $\F: \D\rightarrow \Lambda G_{\sigma}$ with $\F(0)=e$, of some primitive harmonic map $f$ one can show that there exists a global matrix function
$V_{+}: \D\rightarrow \Lambda^{+}G_{\sigma}^{\mathbb C}$ solving the $\bar{\partial}$-problem
\begin{equation}
 \bar\partial {V_{+}} V_{+}^{-1}=-(\alpha_{\mathfrak k}^{\prime\prime}+\lambda \alpha_{\mathfrak m}^{\prime \prime} ), \quad\quad
 V_{+}(0)=e
\end{equation}
 over   $\D$, so that $C=\F V_{+}$ gives a \emph{holomorphic 
extended frame}. The Maurer-Cartan form of $C$,  
$\eta=C^{-1}\partial C$  is a $(1,0)$-form defined on $\D$ and takes values in 
\begin{eqnarray*}
\Lambda_{-1,\infty}&:=&\{\xi\in \Lambda \mathfrak{g}_{\sigma}^{\mathbb C} | \xi \text{ extends holomorphically to } \\
&& \quad\quad 0<|\lambda|<1 
\text{ with a simple pole at } 0\}\\
&=&\{\xi=\sum_{l\geq -1} \lambda^l \xi_l\in \Lambda \mathfrak{g}_{\sigma}^{\mathbb C}\}. 
\end{eqnarray*}
This differential 1-form on $\D$ is called a \emph{holomorphic potential} for $f$.

Conversely, starting from a holomorphic $(1,0)$-form $\eta=\sum_{l\geq -1} \lambda^l \eta_l  \in \Lambda_{-1,\infty}$, one  first solves the ODE $dC=C\eta$, $C(0, \lambda) = e$ over  $\D$, where $C\in \Lambda G^{\mathbb C}_{\sigma}$. Performing an Iwasawa decomposition of $C$: $C={\mathbb F}V_{+}$,
where $\mathbb{F}=\mathbb{F}(z,\lambda) \in \Lambda G_{\sigma}$
and $V_{+}=V_0+\lambda V_1+\lambda^2 V_2+\cdots \in \Lambda^+ G^{\mathbb C}_{\sigma}$, it turns out that $\F$ is the extended frame of some primitive harmonic map $f=\pi\circ \F_{\lambda=1}:\D\rightarrow G/K.$
\vspace{2mm}

Altogether we obtain

\begin{theorem}[\cite{DPW}] Let $G/K$ be a compact $k$-symmetric space.
Let $f: \D\rightarrow G/K$ be a primitive harmonic map with $f(0)=eK$ and $\F: \D \rightarrow \Lambda G_{\sigma}$  an extended frame of $f$ satisfying $\F(0,\lambda) = e$. 
Then there exists a matrix function 
$V_+: \D \rightarrow  \Lambda^{+}G_{\sigma}^{\mathbb{C}}$ such that 
$C = \F V_+$ is holomorphic in $z \in \D$. Furthermore,  $\eta = C^{-1} dC \in \Lambda_{-1,\infty}$ is a holomorphic $(1,0)$-form on $\D$, called a \emph{holomorphic potential} for $f$.

 Conversely, given a  holomorphic $(1,0)$-form $\eta \in \Lambda_{-1,\infty}$ on $\D$ 
we obtain a map  $C:\D \rightarrow \Lambda G_{\sigma}^{\mathbb C}$, 
satisfying $dC = C \eta$ and $C(0,\lambda)=e$.
Performing an Iwasawa decomposition of $C$ we obtain an extended frame $\F: \D \rightarrow \Lambda G_{\sigma}$ of some  primitive harmonic map $f=\pi\circ \F_{\lambda=1}$. 
\end{theorem}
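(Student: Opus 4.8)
The plan is to prove the theorem in two directions, mirroring the two ``Constructions'' sketched above, and relying on the Birkhoff and Iwasawa splitting theorems already stated. The forward direction builds a holomorphic potential from a given extended frame; the converse recovers a primitive harmonic map from an abstract potential. The main analytic input in both directions is the solvability of a matrix $\bar\partial$-problem (resp.\ a holomorphic ODE) in the Banach Lie group $\Lambda G_\sigma^{\mathbb C}$, together with the Iwasawa decomposition.

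For the \textbf{forward direction}, I would start with the extended frame $\F\colon\D\to\Lambda G_\sigma$ satisfying $\F(0,\lambda)=e$, whose Maurer--Cartan form is the family of flat connections $\alpha_\lambda=\lambda^{-1}\alpha_{\mathfrak m}'+\alpha_{\mathfrak k}+\lambda\alpha_{\mathfrak m}''$. The key step is to solve the $\bar\partial$-problem $\bar\partial V_+\,V_+^{-1}=-(\alpha_{\mathfrak k}''+\lambda\alpha_{\mathfrak m}'')$ with $V_+(0)=e$ for a map $V_+\colon\D\to\Lambda^+G_\sigma^{\mathbb C}$; since $\D$ is contractible (a disk, plane, or half-plane, hence Stein), this inhomogeneous $\bar\partial$-equation is solvable with holomorphic dependence on $\lambda$, and the triangular structure of $\Lambda^+$ guarantees the solution stays in $\Lambda^+G_\sigma^{\mathbb C}$. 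Setting $C=\F V_+$, a direct computation of $C^{-1}dC$ shows that the antiholomorphic part cancels, so $\eta=C^{-1}\partial C=C^{-1}dC$ is a holomorphic $(1,0)$-form; inspecting the $\lambda$-powers shows $\eta$ lands in $\Lambda_{-1,\infty}$, with the $\lambda^{-1}$ term coming precisely from $\alpha_{\mathfrak m}'$. This establishes that $\eta$ is a holomorphic potential.

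For the \textbf{converse}, given $\eta\in\Lambda_{-1,\infty}$ holomorphic on $\D$, I would first solve the linear holomorphic ODE $dC=C\eta$ with $C(0,\lambda)=e$; solvability over the contractible $\D$ follows from the flatness of $\eta$ (which is automatic since $\eta$ is a $(1,0)$-form with holomorphic coefficients, so $d\eta+\eta\wedge\eta$ has no $(1,1)$-part) and yields $C\colon\D\to\Lambda G_\sigma^{\mathbb C}$ depending holomorphically on $z$. Then I would apply the Iwasawa decomposition theorem to write $C=\F V_+$ pointwise, with $\F\colon\D\to\Lambda G_\sigma$ and $V_+\colon\D\to\Lambda_B^+G_\sigma^{\mathbb C}$; real-analyticity of the Iwasawa splitting gives smooth dependence of $\F$ on $z$. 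The final step is to verify that $\F$ is an extended frame: computing $\F^{-1}d\F=V_+\eta V_+^{-1}-dV_+\,V_+^{-1}$ and examining the $\lambda$-expansion, one checks that the $\lambda^{-1}$-coefficient lies in $\mathfrak g_{-1}$ and that the form has the structure $\lambda^{-1}\alpha_{\mathfrak m}'+\alpha_{\mathfrak k}+\lambda\alpha_{\mathfrak m}''$, which by the loop-group characterization forces $f=\pi\circ\F_{\lambda=1}$ to be primitive harmonic.

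The \textbf{main obstacle} is the regularity and solvability of the $\bar\partial$-problem in the infinite-dimensional group $\Lambda^+G_\sigma^{\mathbb C}$ in the forward direction, ensuring the solution $V_+$ is globally defined on $\D$ and holomorphic in $\lambda$; in the converse, the subtle point is confirming that the Iwasawa projection $\F$ really produces the correct reality and degree structure of an extended frame rather than merely a loop-group-valued map. Both points are handled by the cited results of \cite{DPW}, and the contractibility of $\D$ is what makes the global solvability possible; I would emphasize that the entire argument is essentially a translation of the DPW construction into the present $k$-symmetric-space setting, so the proof is a verification rather than a new construction.
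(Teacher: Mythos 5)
Your proposal is correct and follows essentially the same route as the paper, which itself only sketches this argument in ``Construction 1'' and defers the details to \cite{DPW}: solve the $\bar\partial$-problem for $V_+$ to holomorphize the frame in the forward direction, and solve $dC=C\eta$ followed by the Iwasawa splitting in the converse. The points you flag as the main obstacles (global solvability of the $\bar\partial$-problem over the contractible $\D$ and the reality/degree check on $\F^{-1}d\F$) are exactly the ones the cited reference handles, so no gap.
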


{\bf Construction 2: Normalized potentials}
\vspace{2mm}

If one does not require that $C = \F V_+$ is necessarily holomorphic in $z$, but only meromorphic,  then by using the  Birkhoff decomposition  
$\F_- = \F V_+$, \cite{DPW} shows that any  harmonic map 
$f: \D\rightarrow G/K$ can  be obtained from a meromorphic potential of the form $ \F_-^{-1} d\F_- = \mu=\lambda^{-1}\mu_{-1}$. 
Note, in this step $\F_-$ is automatically meromorphic on $\D$.

The converse procedure follows the pattern outlined above.
We collect the results for the meromorphic case by

\begin{theorem}[\cite{DPW}] Let $G/K$ be a compact $k$-symmetric space.
Let $f: \D\rightarrow G/K$ be a primitive harmonic map with $f(0)=eK$ and $\F: \D \rightarrow \Lambda G_{\sigma}$  an extended frame of $f$ satisfying $\F(0,\lambda) = e$. 
Then there exists a discrete subset $S\subset \D\backslash \{0\}$ such that for any point $z\in \D\backslash S$, the Birkhoff decomposition 
$\F(z,\cdot)=\F_{-}(z,\cdot) \F_{+}(z,\cdot)$ exists, with 
$\F_{-}(z,\cdot)\in \Lambda^{-}_{*}G_{\sigma}^{\mathbb{C}}$ and 
$\F_{+}(z,\cdot)\in \Lambda^{+}G_{\sigma}^{\mathbb{C}}$, and 
$\eta= \F_{-}(z,\lambda)^{-1}d\F_{-}(z,\lambda)$ is a $\mathfrak{m}^{\mathbb C}$-valued meromorphic $(1,0)$-form with poles in $S$ and which only contains the power $\lambda^{-1}$.

 Conversely, given a  $\mathfrak{m}^{\mathbb C}$-valued meromorphic $(1,0)$-form $\eta$ on $\D$ containing only the power $\lambda^{-1}$, for which the solution to $\F_{-}(z,\lambda)^{-1}d\F_{-}(z,\lambda)= \eta$ with $\F_{-}(0,\cdot)=e$ is meromorphic, we obtain a map  $\F_{-}:\D\backslash S \rightarrow \Lambda_{*}^{-} G_{\sigma}^{\mathbb C}$, where the discrete subset $S\subset \D\backslash\{0\}$ consists of the poles of $\eta$.
Performing an Iwasawa decomposition of $\F_-$ we obtain an extended frame $\F: \D\backslash S\rightarrow \Lambda G_{\sigma}$ of some  primitive harmonic map $f$  which satisfies $\F(0,\lambda) = e$. 

The two constructions explained in this theorem are inverse to each other. 
\end{theorem}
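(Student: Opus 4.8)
The plan is to reduce the meromorphic statement to the already-established holomorphic potential theorem by inserting a Birkhoff decomposition, and then to extract the sharp power-of-$\lambda$ information from the two sides of that decomposition. For the direct assertion I would start from the holomorphic extended frame $C = \F V_+$ produced by Construction 1, which is holomorphic in $z\in\D$ and satisfies $C^{-1}dC = \eta_C \in \Lambda_{-1,\infty}$. Since $C(0,\lambda)=e$ lies in the big cell, which is open by the Birkhoff Decomposition theorem, and $C$ is holomorphic in $z$, the set $S\subset\D\setminus\{0\}$ of points where $C(z,\cdot)$ leaves the big cell is a proper complex-analytic subset of the one-dimensional domain $\D$, hence discrete. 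On $\D\setminus S$ the factorization $C=C_-C_+$ with $C_-\in\Lambda^-_* G^{\mathbb C}_\sigma$ and $C_+\in\Lambda^+ G^{\mathbb C}_\sigma$ exists and is unique; because Birkhoff factorization is biholomorphic on the big cell, $C_-$ is meromorphic in $z$ with poles on $S$. I would then check that the Birkhoff factor of the frame itself is $C_-$: writing $\F = C V_+^{-1} = C_-(C_+V_+^{-1})$ with $C_+V_+^{-1}\in\Lambda^+ G^{\mathbb C}_\sigma$, uniqueness of Birkhoff gives $\F_- = C_-$, so it suffices to analyze $\eta = C_-^{-1}dC_-$.

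The heart of the direct statement is that $\eta$ contains only $\lambda^{-1}$, which I would prove by a two-sided power count. On one hand $C_-\in\Lambda^-_*G^{\mathbb C}_\sigma$ has Fourier expansion in powers $\lambda^{j}$, $j\le 0$, with constant term $e$, so $dC_-$ has only powers $\le -1$ and $C_-^{-1}dC_-$ only powers $\le -1$. On the other hand, substituting $dC = C\eta_C$ and $C_-^{-1}C = C_+$ gives $\eta = C_+\,\eta_C\,C_+^{-1} - (dC_+)C_+^{-1}$, where $(dC_+)C_+^{-1}$ has only powers $\ge 0$ and $C_+\eta_C C_+^{-1}$ has lowest power $\lambda^{-1}$; hence $\eta$ has only powers $\ge -1$. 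Combining, $\eta = \lambda^{-1}\eta_{-1}$. Its coefficient is $\eta_{-1}=\operatorname{Ad}(C_+(0))\,(\eta_C)_{-1}$ with $C_+(0)\in K^{\mathbb C}$; since $\operatorname{Ad}(K^{\mathbb C})$ preserves each eigenspace $\mathfrak g^{\mathbb C}_l$ and $(\eta_C)_{-1}\in\mathfrak g_{-1}\subset\mathfrak m^{\mathbb C}$, the form $\eta$ is $\mathfrak m^{\mathbb C}$-valued, and it is of type $(1,0)$ because $C_-$ is holomorphic in $z$ off $S$, so $dC_- = \partial C_-$.

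For the converse I would take the solution $\F_-$ of $d\F_- = \F_-\,\eta$ with $\F_-(0,\cdot)=e$, which by hypothesis is meromorphic on $\D$ with poles in $S$. That $\F_-$ takes values in $\Lambda^-_*G^{\mathbb C}_\sigma$ follows because $\eta=\lambda^{-1}\eta_{-1}\in\Lambda^-\mathfrak g^{\mathbb C}_\sigma$, so the flow preserves $\Lambda^- G^{\mathbb C}_\sigma$, while evaluation at $\lambda=\infty$, where $\eta\equiv 0$, keeps $\F_-(z,\infty)=\F_-(0,\infty)=e$. Applying the Iwasawa decomposition theorem to $\F_-$ yields $\F_- = \F V_+$ with $\F\in\Lambda G_\sigma$ and $V_+\in\Lambda^+_B G^{\mathbb C}_\sigma$, real-analytic on $\D\setminus S$. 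The main obstacle is to verify that this $\F$ is genuinely an extended frame, i.e. that $\alpha:=\F^{-1}d\F = V_+\,\eta\,V_+^{-1} - (dV_+)V_+^{-1}$ has exactly the shape $\lambda^{-1}\alpha_{\mathfrak m}'+\alpha_{\mathfrak k}+\lambda\alpha_{\mathfrak m}''$. The power count above shows $\alpha$ has only powers $\ge -1$ with $\lambda^{-1}$-coefficient $\operatorname{Ad}(V_+(0))\eta_{-1}\in\mathfrak g_{-1}$; the crucial extra input is that $\F\in\Lambda G_\sigma$ forces $\alpha\in\Lambda\mathfrak g_\sigma$, and the reality condition defining the compact real loop algebra $\Lambda\mathfrak g_\sigma$ upgrades the one-sided bound ``powers $\ge -1$'' to the symmetric bound ``powers in $\{-1,0,1\}$'', thereby pinning the three Fourier terms and identifying them as $\alpha_{\mathfrak m}'$, $\alpha_{\mathfrak k}$, $\alpha_{\mathfrak m}''$. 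I expect this to require the most care, since it is exactly where the compact real form and the $(1,0)/(0,1)$ splitting interact; once it is in place, $f=\pi\circ\F_{\lambda=1}$ is primitive harmonic and $\F(0,\lambda)=e$ follows from $\F_-(0,\cdot)=e$.

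Finally I would show the two constructions are mutually inverse using uniqueness in the Birkhoff and Iwasawa theorems together with the normalization $\F(0,\lambda)=e$. For direct-then-converse, the ODE $d\F_-=\F_-\eta$ with $\F_-(0,\cdot)=e$ has the Birkhoff factor $C_-$ as its unique solution, so the converse reproduces the same $\F_-$; the ensuing Iwasawa step returns the original frame up to a based $K$-valued gauge, which does not alter $f=\pi\circ\F_{\lambda=1}$. The discrepancy one must track is that the Birkhoff factor $\F_+$ satisfies $\F_+(0)\in K^{\mathbb C}$ whereas the Iwasawa factor $V_+$ satisfies $V_+(0)\in B$, these differing precisely by such a gauge. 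The converse-then-direct composition is handled symmetrically, again invoking uniqueness of the two decompositions.
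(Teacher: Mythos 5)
Your proposal is correct and follows essentially the same route as the paper (which cites \cite{DPW} for this theorem and only sketches it, but carries out the identical computation in the Wu's-formula subsection): Birkhoff-decompose the frame, obtain $\eta=\F_+\alpha\F_+^{-1}-d\F_+\F_+^{-1}$, and pin down $\eta=\lambda^{-1}\eta_{-1}$ by the two-sided power count in $\lambda$, with uniqueness of the Birkhoff and Iwasawa splittings giving the inverse statement. Your only deviation is to factor the holomorphic frame $C$ rather than $\F$ itself and then identify $\F_-=C_-$; this is a clean way to make the discreteness of $S$ and the meromorphy of $\F_-$ transparent, and it matches the paper's remark that ``$\F_-$ is automatically meromorphic.''
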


\begin{remark} \begin{enumerate}
\item The  $\mathfrak{m}^{\mathbb C}$-valued meromorphic $(1,0)$-form $\eta$  on $\D$,  unique after the choice of a base point, 
is called the {\it normalized potential of $f$ with the point $0$ as the reference point}.  
\item  We would like to emphasize  that $\bar\partial \F_{-}(z,\lambda)=0$ on $\D\backslash S$.  
 
\item  In the generality discussed above, a meromorphic $\eta$ will in general not yield a globally smooth minimal Lagrangian immersion.  To obtain global smoothness one needs to require certain relations between the poles and zeros of the coefficients of the potential. For CMC surfaces in $\R^3$, see e.g. \cite{DoHaMero}.
\end{enumerate}
\end{remark}

More information concerning the loop group method can be found in 
 \cite{DoHa;sym1}, \cite{DoHa;sym2}, \cite{DoWasym1}.
 
\subsection{Minimal Lagrangian surfaces in $\mathbb{C}P^2$ }
\label{subsec:miL}

We recall briefly the basic set-up for minimal Lagrangian surfaces in $\mathbb{C}P^2$. For details we refer to \cite{MM} and references therein.

Let $\mathbb CP^2$ be the complex projective plane endowed with the Fubini-Study metric of constant holomorphic sectional curvature $4$. For a minimal Lagrangian immersion of a nonorientable surface, some double cover is orientable. Hence it is no restriction to assume in this paper  that $f:M \rightarrow {\mathbb C}P^2$ is a minimal Lagrangian immersion of an oriented surface. 

The induced metric on $M$ generates a conformal structure with
respect to which the metric is $g=2e^{u} dzd{\bar z}$, and where $z=x+iy$ is a local
conformal coordinate on $M$ and $u$ is a real-valued function defined on $M$ locally.
 
For our approach in this paper we will need certain lifts to  $S^5(1) =\{Z\in \mathbb{C}^3 | Z\cdot \bar{Z}=1\}$, 
where $Z\cdot \overline{W}=\sum_{k=1}^{3} z_{k} \overline{w_{k}}$
denotes the Hermitian inner product for any $Z=(z_1,z_2,z_3)$ and $W=(w_1,w_2,w_3)\in \mathbb{C}^3$.

 \begin{theorem}
Let $f: M \rightarrow {\mathbb C}P^2$ be a Lagrangian immersion of an oriented surface and $U$ an open contractible subset of $M$. 
Then the Lagrangian immersion $f_{U}:=f|_{U}$ has a horizontal lift ${\f}_U: U \rightarrow S^5(1) $, i.e. ${\f}_U$ satisfies the  equations
\begin{equation}\label{horizontal}
({\f}_U)_z \cdot {\overline {{\f}_U}}=({\f}_U)_{\bar z} \cdot {\overline {{\f}_U}} =0.
\end{equation}
Moreover, ${\f}_U$ is uniquely determined by this property up to a constant factor $\delta \in S^1$. 
\end{theorem}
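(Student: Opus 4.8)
The plan is to produce the horizontal lift by correcting an arbitrary smooth lift by an $S^1$-valued phase, and to show that the Lagrangian hypothesis is exactly what makes the correction possible. First I would obtain a preliminary smooth lift: the Hopf fibration $\pi\colon S^5(1)\to \C P^2$ is a principal $S^1$-bundle, so its pullback $f_U^{*}S^5(1)\to U$ is a principal $S^1$-bundle over the contractible set $U$; since $H^2(U;\Z)=0$ this bundle is trivial and admits a global section, i.e.\ there is a smooth map $\tilde{\mathfrak f}\colon U\to S^5(1)$ with $\pi\circ\tilde{\mathfrak f}=f_U$. Because the fibers of $\pi$ are the $S^1$-orbits, every other lift has the form $\mathfrak f_U=e^{i\phi}\tilde{\mathfrak f}$ for a smooth real function $\phi$ on $U$ (smoothness on the simply connected $U$ follows by lifting the $S^1$-valued ratio to $\R$), so the whole problem becomes the choice of $\phi$.

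Next I would record the horizontality condition in terms of $\phi$. Since $|\tilde{\mathfrak f}|^2\equiv 1$, the quantity $d\tilde{\mathfrak f}\cdot\overline{\tilde{\mathfrak f}}$ is purely imaginary, so I may write $d\tilde{\mathfrak f}\cdot\overline{\tilde{\mathfrak f}}=i\beta$ with $\beta$ a real $1$-form on $U$; here $\beta=\tilde{\mathfrak f}^{*}\vartheta$ is the pullback of the standard connection $1$-form $\vartheta$ of the Hopf fibration. A direct computation gives $d\mathfrak f_U\cdot\overline{\mathfrak f_U}=i(d\phi+\beta)$, whose $dz$- and $d\bar z$-components are exactly $(\mathfrak f_U)_z\cdot\overline{\mathfrak f_U}$ and $(\mathfrak f_U)_{\bar z}\cdot\overline{\mathfrak f_U}$. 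Hence both horizontality equations \eqref{horizontal} hold simultaneously if and only if $d\phi=-\beta$, and such a $\phi$ exists on the contractible (hence simply connected) set $U$ precisely when $\beta$ is closed.

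The key step, and the one where the Lagrangian hypothesis enters, is therefore to prove $d\beta=0$. Differentiating $i\beta=d\tilde{\mathfrak f}\cdot\overline{\tilde{\mathfrak f}}$ yields $i\,d\beta=-d\tilde{\mathfrak f}\wedge\overline{d\tilde{\mathfrak f}}$, and this expression is a nonzero constant multiple of the pullback $f_U^{*}\omega$ of the Fubini--Study Kähler form $\omega$ (indeed $d\beta=\tilde{\mathfrak f}^{*}d\vartheta$, and $d\vartheta$ equals $\pi^{*}\omega$ up to normalization). Because $f$ is a Lagrangian immersion we have $f_U^{*}\omega=0$, so $d\beta=0$; the desired $\phi$ exists, and setting $\mathfrak f_U=e^{i\phi}\tilde{\mathfrak f}$ produces a horizontal lift. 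I expect the main obstacle to be the bookkeeping in this step: one must identify $\beta$ correctly as the pulled-back connection form and verify the proportionality $i\,d\beta\sim f_U^{*}\omega$ in the Hermitian conventions in force, the exact constant being irrelevant once one knows $f_U^{*}\omega$ vanishes.

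Finally, for uniqueness, suppose $\mathfrak f_U$ and $\mathfrak f_U'$ are both horizontal lifts. They lie over the same map $f_U$, so $\mathfrak f_U'=e^{i\phi}\mathfrak f_U$ for a smooth real $\phi$, and the computation above gives $0=d\mathfrak f_U'\cdot\overline{\mathfrak f_U'}=i\,d\phi$, using $d\mathfrak f_U\cdot\overline{\mathfrak f_U}=0$. Thus $d\phi=0$, so $\phi$ is constant on the connected set $U$ and $\mathfrak f_U'=\delta\,\mathfrak f_U$ with $\delta=e^{i\phi}\in S^1$, which is the asserted uniqueness up to a constant factor in $S^1$.
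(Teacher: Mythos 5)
Your proposal is correct and follows essentially the same route as the paper: trivialize the pullback of the Hopf fibration over the contractible set $U$ to get a preliminary lift, observe that the Lagrangian condition forces the real $1$-form $-i\,d\tilde{\mathfrak f}\cdot\overline{\tilde{\mathfrak f}}$ to be closed so it integrates to a phase correction, and deduce uniqueness from the same phase computation. The only difference is that you spell out the closedness step (via $d\vartheta=\pi^{*}\omega$ and $f_U^{*}\omega=0$) which the paper dismisses as ``straightforward to verify.''
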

\begin{proof}
First we note, that $f$ has a lift $\check{\f}_U$ to $S^5(1) $, since the pullback of the Hopf  fibration $S^5(1) \rightarrow {\mathbb C}P^2$ is trivial over the contractible subset $U$.

Secondly, it is straightforward to verify that
this lift  $\check{\f}_U$  induces  for any Lagrangian immersion  the closed one-form 
$d \check{\f}_U\cdot \overline{\check{\f}_U}$.
Hence, since $U$ is contractible,  there exists a real function $\eta\in C^\infty (U)$ such that 
$i d\eta= d \check{\f}_U\cdot \overline{\check{\f}_U}$.
Then ${\f}_U =e^{-i\eta} \check{\f}_U$ is a  horizontal lift of $f$ from $U$ to $S^5(1)$.
If $\mathfrak{g}_U$ is another horizontal lift, then  $\mathfrak{g}_U= e^{ih}\f_U$ by the definition of the Hopf fibration. Then a straightforward computation using  (\ref{horizontal}) implies that the function $e^{ih}$ is a constant $\delta$ and again the definition of the Hopf fibration implies  $\delta \in S^1$.
\end{proof}

Remark that an immersed surface $\f: M \rightarrow S^5(1)$ is called \emph{Legendrian} with respect to the standard contact form if it satisfies (\ref{horizontal}).

It is useful to point out that any  minimal Lagrangian immersion $f:M\rightarrow {\mathbb C}P^2$ 
has a natural lift  $\tilde{f}: \tilde{M} \rightarrow {\mathbb C}P^2$ to the universal cover $\tilde{M}$ as a  minimal Lagrangian immersion.

\begin{corollary} \label{uniquehori}
Let $f:M\rightarrow {\mathbb C}P^2$ be a minimal Lagrangian immersion of an oriented
surface and $\tilde{f}:\tilde{M} \rightarrow {\mathbb C}P^2$  its natural lift to the universal cover $\tilde{M}$ of $M$. Then, if  $M \neq S^2$, then $\tilde{M} $ is contractible and 
$\tilde{f}:\tilde{M} \rightarrow {\mathbb C}P^2$  can be lifted to a horizontal map
$\tilde{F}:\tilde{M} \rightarrow S^5(1)$ and this map is uniquely determined up to some constant factor $\delta \in S^1$.
\end{corollary}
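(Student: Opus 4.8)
The plan is to reduce the corollary to the Theorem immediately preceding it by first establishing that $\tilde M$ is contractible, so that $\tilde M$ can itself serve as the contractible set $U$ of that Theorem. The statement really splits into two independent pieces: the topological claim that $\tilde M$ is contractible as soon as $M \neq S^2$, and the geometric claim about existence and uniqueness of the horizontal lift. Only the first piece requires genuine work; once it is in hand, the second follows at once by applying the Theorem with $U = \tilde M$.

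For the contractibility I would argue as follows. The induced metric endows $M$ with a conformal structure, so $M$ is a Riemann surface and the covering projection $p:\tilde M \to M$ is a local biholomorphism; hence $\tilde M$ is a \emph{simply connected} Riemann surface. By the uniformization theorem, $\tilde M$ is biholomorphic to one of $S^2$, $\C$, or $\D$. Since both $\C$ and $\D$ are contractible, it suffices to exclude the case $\tilde M \cong S^2$. If $\tilde M \cong S^2$, then $M = \tilde M / \Gamma$ with $\Gamma \cong \pi_1(M)$ acting freely by deck transformations, and comparing Euler characteristics gives $\chi(M) = 2/|\Gamma|$. Integrality forces $|\Gamma| \in \{1,2\}$: the value $|\Gamma|=1$ gives $M = S^2$, which is excluded by hypothesis, while $|\Gamma| = 2$ gives $\chi(M) = 1$, i.e. $M \cong \R P^2$, which is non-orientable and hence excluded since $M$ is oriented. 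Therefore $\tilde M \cong \C$ or $\tilde M \cong \D$, and in either case $\tilde M$ is contractible. (Equivalently, one can exclude $S^2$ by noting that the deck transformations are holomorphic automorphisms of $S^2$, i.e. M\"obius transformations, and every non-identity such map has a fixed point, so a free action forces $\Gamma$ trivial.)

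With $\tilde M$ contractible the remaining assertions are immediate. By the remark preceding the corollary, the natural lift $\tilde f : \tilde M \to \C P^2$ is again a minimal, in particular Lagrangian, immersion of an oriented surface. Applying the preceding Theorem to $\tilde f$ with the open contractible set $U = \tilde M$ yields a horizontal lift $\tilde F : \tilde M \to S^5(1)$ satisfying the equations \eqref{horizontal}, unique up to a constant factor $\delta \in S^1$, which is exactly what is claimed. The one point that genuinely needs care — and is the heart of the argument — is the exclusion of $S^2$ as the universal cover; this is where the orientability hypothesis enters, and without it the statement would fail for $M \cong \R P^2$.
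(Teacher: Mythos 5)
Your proof is correct and follows the route the paper intends: the corollary is stated without a separate proof precisely because it is the preceding Theorem applied with $U=\tilde M$, once one knows (via uniformization and the exclusion of $S^2$ as universal cover, which the paper records earlier as the observation that the simply-connected cover is non-compact, whence contractible) that $\tilde M$ is contractible. Your Euler-characteristic/M\"obius fixed-point argument for excluding $\tilde M\cong S^2$ is a correct and slightly more explicit filling-in of that step.
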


 For general Riemann surfaces $M$  and general conformal immersions 
 $f: M \rightarrow \C P^2$ such a (global) lift  $\f:M \rightarrow S^5$ does not exist.

\begin{example}\label{Clifford} The map $f:\R^2\rightarrow S^5(1)$ defined by 
$$f(x,y)= \frac{1}{\sqrt{3}}(e^{2iy}, e^{i(\sqrt{3}x-y)}, e^{-i(\sqrt{3}x+y)})$$
is a flat, conformal, minimal Legendrian immersion in $S^{5}(1)$. It induces an embedded torus in $S^{5}(1)$
$$g: \R^2 /\Lambda \rightarrow S^5(1),$$
where $\Lambda=\Z \omega_1\oplus \Z \omega_2$ with $\omega_1=(0,2\pi)$ and $ \omega_2=(\frac{\pi}{\sqrt{3}}, \pi)$. Project this map by the Hopf fibration ${\varPi}_H: S^5(1)\rightarrow \C P^2$, we obtain a minimal Lagrangian torus 
$$f_0={\varPi}_H\circ g: \R^2 /\Lambda \rightarrow \C P^2.$$
 Notice that 
\begin{equation}\label{lamda_prime}
f((x,y)+\frac{k}{3}\omega_1+m\omega_2)=e^{-i\frac{2\pi}{3}k} f(x,y),
\end{equation}
where $k, m\in \Z$. Hence it induces a map $f_0^{\prime}: \R^2/ \Lambda_{C} \rightarrow \C P^2$, satisfying $f'_0 \circ {\varPi}' ={\varPi}_{H} \circ f$,  where $\Lambda_{C}=\Z\frac{\omega_1}{3}\oplus \Z\omega_2$ and ${\varPi}'$ is the natural projection 
from $\R^2$  to $\R^2/ \Lambda_{C}$.

We observe that $f_0^{\prime}$ is an embedded minimal Lagrangian  torus in 
$\C P^2$. This torus  is called the \emph{Clifford torus}. 

We claim that $f_0^{\prime}$ is not globally horizontally liftable.  Otherwise, assume that there exists a horizontal map
$$g^{\prime}: \R^2/\Lambda_{C} \rightarrow S^5(1)$$
such that ${\varPi}_H\circ g^{\prime}=f_0^{\prime}$. Then we have 
$${\varPi}_H\circ g^{\prime}\circ {\varPi}^{\prime}=f_0^{\prime}\circ {\varPi}^{\prime}={\varPi}_H\circ f.$$
Thus there exists $h: \R^2\rightarrow \R$ such that $g^{\prime}\circ {\varPi}^{\prime}=e^{ih(x,y)} f$.
This means
$$g^{\prime}\circ \tau\circ {\varPi}=e^{ih(x,y)} g\circ {\varPi},$$
where we set ${\varPi}: \R^2 \rightarrow\R^2/\Lambda$ and  $\tau: \R^2/\Lambda \rightarrow \R^2/\Lambda_{C}$ are the natural projections, whence 
${\varPi}' = \tau \circ {\varPi}$.
Since $g$ and $g^{\prime} \circ \tau$ are horizontal, $h$ is constant and hence we have 
$$g^{\prime}\circ {\varPi}^{\prime}=e^{ih} f.$$
But the left hand side is invariant under the lattice $\Lambda_{C}$ and the right hand side depends on $k$ due to \eqref{lamda_prime}. This contradiction shows the claim holds.
\end{example}

We would like to emphasize again, that in this paper all Lagrangian surfaces are assumed to be defined on a contractible domain. Hence we exclude $M =S^2$
throughout this paper  (except where the opposite is stated explicitly) and will 
always use $M = \D$ as domains of minimal Lagrangian surfaces, indicating that $\D$ is a contractible domain, usually assumed to be $\C$ or the upper-half plane or the unit disk.

\subsection{Frames for horizontal lifts}\label{subsec:frames}
In this subsection we assume that $\D$ is a contractible Riemann surface,  $f: \D \rightarrow \C P^2$ a minimal Lagrangian immersion and $\f: \D \rightarrow S^5(1)$ a horizontal lift of $f$.

The fact that the induced metric $g$ is conformal is equivalent to
\begin{equation}\label{fconf1} 
\begin{split}
&{\f}_z\cdot \overline{\f_z}=\f_{\bar z}\cdot \overline{\f_{\bar z}}=e^{u},\\
& \f_z\cdot \overline{\f_{\bar z}}=0.
\end{split}
\end{equation}
Thus 
$\mathcal{F}=(e^{-\frac{u}{2}}\f_z, e^{-\frac{u}{2}}\f_{\bar z}, \f)$ defines a Hermitian orthonormal moving frame on the surface $\D$.

It follows from \eqref{horizontal}, \eqref{fconf1} and the minimality of
 $f$ that $\mathcal{F}$ satisfies the frame equations (see e.g. \cite{MM})
\begin{equation}\label{eq:frame1}
\mathcal{F}_{z}=\mathcal{F} {\mathcal U}, \quad \mathcal{F}_{\bar z}=\mathcal{F} {\mathcal V},
\end{equation}
where
\begin{equation}\label{eq:UV1}
{\mathcal U}=\left(\begin{array}{ccc}
                   \frac{u_z}{2} & 0 & e^{\frac{u}{2}} \\
      e^{-u}\psi  &-\frac{u_z}{2}  & 0 \\
       0 & -e^{\frac{u}{2}}  &0 \\
     \end{array}
   \right),
   \quad
{\mathcal V}=\left(
     \begin{array}{ccc}
      -\frac{u_{\bar z}}{2}  &  - e^{-u}\bar\psi  &0 \\
       0& \frac{u_{\bar z}}{2} &e^{\frac{u}{2}} \\
       -e^{\frac{u}{2}}  &0 &0 \\
     \end{array}
   \right),
\end{equation}
with
\begin{equation}\label{eq:phipsi}
\psi=\f_{zz}\cdot\overline{\f_{\bar z}}.
\end{equation}

Using (\ref{fconf1}) and Corollary \ref{uniquehori} one can easily check that the cubic differential $\Psi=\psi dz^3$ is actually independent of the choice of a lift and the complex coordinate $z$ of $\D$ and thus is globally defined on the Riemann surface $\D$. The differential $\Psi$ is called the \emph{Hopf differential} of $f$.

The compatibility condition of  the equations \eqref{eq:frame1} is
${\mathcal U}_{\bar z}-{\mathcal V}_z =[{\mathcal U},{\mathcal V}]$,  
and using \eqref{eq:UV1} this turns out to be equivalent to
 \begin{eqnarray}
u_{z\bar z}+e^u-e^{-2u}|\psi|^2&=0,\label{eq:mLsurfaces}\\
\psi_{\bar z}&=0.\label{eq:Codazzi}
\end{eqnarray}


Since  $\f$  it is uniquely determined up to a constant factor $\delta \in S^1$,
also $\mathcal{F}$ is only defined up to this  constant factor. 
We can actually assume 

\begin{proposition}
Let $\D$ be a contractible Riemann surface  and  $f: \D \rightarrow {\mathbb C}P^2$ a minimal Lagrangian immersion. Then for the corresponding  frame $\mathcal{F}$ we can assume without loss of generality
$\det \mathcal{F} = -1$.
Under this assumption $\mathcal{F}$ is uniquely determined up to some factor $\delta$ satisfying $\delta^3 = 1$.
\end{proposition}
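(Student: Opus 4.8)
The plan is to first observe that $\mathcal{F}$ takes values in the unitary group, so that $\det \mathcal{F}$ is automatically unimodular, then to show that it is in fact a \emph{constant}, and finally to exploit the $S^1$-ambiguity of the horizontal lift $\f$ to normalize that constant. The whole argument is a short computation once the right two facts ($\mathcal{F}\in \mathrm{U}(3)$ and $\tr\mathcal{U}=\tr\mathcal{V}=0$) are isolated.

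First I would record that the columns of $\mathcal{F}=(e^{-\frac{u}{2}}\f_z,\, e^{-\frac{u}{2}}\f_{\bar z},\, \f)$ form a Hermitian orthonormal triple. This is exactly the content of \eqref{fconf1} and \eqref{horizontal}: the first two columns have unit length and are mutually orthogonal by \eqref{fconf1}, each is orthogonal to $\f$ by \eqref{horizontal}, and $\f\cdot\overline{\f}=1$ since $\f$ maps into $S^5(1)$. Hence $\mathcal{F}\in\mathrm{U}(3)$ pointwise, so $\det\mathcal{F}$ takes values in $S^1$. Next I would show that this value is constant on $\D$. Applying Jacobi's formula to the structure equations \eqref{eq:frame1} gives $\partial_z\det\mathcal{F}=(\det\mathcal{F})\tr\mathcal{U}$ and $\partial_{\bar z}\det\mathcal{F}=(\det\mathcal{F})\tr\mathcal{V}$; reading the diagonal entries of $\mathcal{U}$ and $\mathcal{V}$ off \eqref{eq:UV1} one sees at once that $\tr\mathcal{U}=\frac{u_z}{2}-\frac{u_z}{2}=0$ and likewise $\tr\mathcal{V}=0$. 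Thus both derivatives vanish, and since $\D$ is connected, $\det\mathcal{F}\equiv c$ for some constant $c\in S^1$.

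Finally I would bring in the ambiguity of the lift. By the uniqueness statement established above (Corollary \ref{uniquehori}), $\f$ is determined only up to a constant factor $\delta\in S^1$. Under $\f\mapsto\delta\f$ the real function $u$ defined through \eqref{fconf1} is unchanged, because $|\delta|=1$; consequently the scalars $e^{-\frac{u}{2}}$ are unchanged and $\mathcal{F}\mapsto\delta\mathcal{F}$, so that $\det\mathcal{F}=c\mapsto\delta^3 c$. Since $\delta\mapsto\delta^3$ maps $S^1$ onto $S^1$, I can choose $\delta$ with $\delta^3=-c^{-1}$ and thereby arrange $\det\mathcal{F}=-1$; this proves the first assertion. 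For the second, once $\det\mathcal{F}=-1$ has been imposed, any further change of lift $\mathcal{F}\mapsto\delta\mathcal{F}$ preserving this normalization must satisfy $\delta^3(-1)=-1$, i.e.\ $\delta^3=1$, and conversely every cube root of unity preserves it; this is precisely the claimed residual freedom.

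I expect no serious obstacle here: the computation is routine, and the two points that merely require care are the bookkeeping justifying $\tr\mathcal{U}=\tr\mathcal{V}=0$ and the observation that rescaling $\f$ by a unimodular constant leaves $u$ invariant, so that the rescaling acts on $\mathcal{F}$ as honest scalar multiplication by $\delta$ (whence the clean factor $\delta^3$ on the determinant) rather than as a more complicated transformation.
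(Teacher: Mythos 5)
Your proof is correct and follows essentially the same route as the paper: unimodularity of $\det\mathcal{F}$ from orthonormality, constancy via $\tr\mathcal{U}=\tr\mathcal{V}=0$ in the frame equations, and normalization using the $S^1$ freedom in the horizontal lift. You are in fact slightly more careful than the paper's own (terser) proof, in that you make explicit that the rescaling acts through $\f\mapsto\delta\f$ with $u$ unchanged, which is what justifies the factor $\delta^3$ and hence the residual freedom $\delta^3=1$ claimed in the second assertion.
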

\begin{proof}
We know from orthonormality that $\det \mathcal{F} = \kappa \in S^1$.
 Moreover, by \eqref{eq:frame1}-\eqref{eq:UV1}, we see that
$d \ln \det \mathcal{F} =  0$, which implies that the determinant of $\mathcal{F}$ is independent of $z$. Hence we can multiply 
$\mathcal{F}$ by a  constant in $S^1$ such  that $\det \mathcal{F} = -1$. 
\end{proof}

From here on we will always assume that $ \det \mathcal{F} = -1$ holds.

Notice that the integrability conditions  \eqref{eq:mLsurfaces}-\eqref{eq:Codazzi} are invariant under the transformation
$\psi \rightarrow \nu \psi$ for any $\nu \in S^1$.
This implies that after replacing $\psi$ in \eqref{eq:UV1}   by  $\psi^\nu=\nu\psi$ the equations \eqref{eq:frame1} are still integrable.
Therefore, the solution $\mathcal{F}(z,\bar{z}, \nu)$, with initial condition
$\mathcal{F}(0,0, \nu) = \mathcal{F}(0,0)$
 to this new system of equations is the frame of some minimal Lagrangian surface $f^\nu$.
The argument above  yields also now that $\det \mathcal{F} (z, \bar z, \nu) = \delta(\nu) \in S^1$ is independent of $z$.

Next we consider $A(z, \bar z, \nu) = \partial_\nu \mathcal{F} (z, \bar z, \nu) \cdot  \mathcal{F} (z, \bar z, \nu) ^{-1}$. It is straightforward to compute
$d A(z, \bar z, \nu) = \mathcal{F} (z, \bar z, \nu) \partial_\nu \beta \mathcal{F} (z, \bar z, \nu)^{-1}$, 
where $\beta = \mathcal{F} (z, \bar z, \nu)^{-1} d \mathcal{F} (z, \bar z, \nu) $.

Therefore we obtain $0 = \tr d A(z, \bar z, \nu) = 
d \tr  (\partial_\nu \mathcal{F} (z, \bar z, \nu) \mathcal{F} (z, \bar z, \nu)^{-1})$,
whence $\partial_\nu \ln \det \mathcal{F} (z, \bar z, \nu)  = 
\tr (\partial_\nu \mathcal{F} (z, \bar z, \nu)  \mathcal{F} (z, \bar z, \nu)^{-1} )= h(\nu)$ is independent of $z$. Setting $z = 0$ and recalling our choice of initial condition we obtain $h(\nu) \equiv 0$. Thus $\det \mathcal{F}(z,\bar{z}, \nu)$ is independent of $z$ and of $\nu$.
As a consequence we can again assume without loss of generality   $\det \mathcal{F}(z,\bar{z}, \nu) = -1$. 

It turns out to be convenient to consider in place of the frames $\mathcal{F}(z,\bar{z}, \nu)$ the gauged frames 
\begin{equation} \label{F}
\mathbb{F}(\lambda)=\mathcal{F}(\nu)\left(
                                   \begin{array}{ccc}
                                     -i\lambda & 0 & 0 \\
                                     0 &  -i\lambda^{-1} & 0 \\
                                     0 & 0 &  1 \\
                                   \end{array}
                                 \right),
\end{equation}
where $i\lambda^3 \nu=1$. Note that we have  $\det \mathcal{F} = -1$ and  
$\det \F = 1$, 
hence  $\F \in \Lambda SU(3)$.
 
By replacing $\mathbb{F}(z,\bar z, \lambda)$ by 
$\mathbb{F}(0,0, \lambda)^{-1} \mathbb{F}(z,\bar{z}, \lambda)$ we can assume without loss of generality  $\mathbb{F}(0,0, \lambda) = I$.
  Note that this normalization implies $\f(0,0, \lambda) = e_3$ and also implies 
  that the minimal Lagrangian immersion   $f(z, \bar z,\lambda) $ satisfies
 $f(0,0, \lambda) = [e_3]$. Thus the change above is the one moving the associated family $f(z, \bar z, \lambda) $ of the immersion $f$ to the associated family 
  of the immersion 
 $\mathbb{F}(0,0)^{-1} f(z,\bar{z}).$ 
 Moreover, the immersions for which the above normalizations hold are uniquely determined and, in particular, independent of the horizontal lift.
 
For these frames we obtain the equations
\begin{equation}\label{eq:mathbbF}
\begin{split}
\mathbb{F}^{-1}\mathbb{F}_z&=
\frac{1}{\lambda}\left(
   \begin{array}{ccc}
     0 & 0 & i e^{\frac{u}{2}} \\
  -i \psi e^{-u}   & 0 & 0 \\
     0 &  i e^{\frac{u}{2}} & 0 \\
   \end{array}
 \right)+\left(
   \begin{array}{ccc}
   \frac{u_z}{2}  & &  \\
      & -\frac{u_z}{2} &  \\
      & & 0 \\
   \end{array}
 \right)\\
 &:=\lambda^{-1}U_{-1}+U_0,\\
\mathbb{F}^{-1}\mathbb{F}_{\bar{z}} &=\lambda \left(
   \begin{array}{ccc}
     0 &  -i\bar{\psi} e^{-u} & 0 \\
     0& 0   & i e^{\frac{u}{2}} \\
     i e^{\frac{u}{2}} &0 & 0 \\
   \end{array}
 \right)+\left(
   \begin{array}{ccc}
     -\frac{u_{\bar z}}{2} & &  \\
      & \frac{u_{\bar z}}{2}  & \\
     & & 0 \\
   \end{array}
 \right)\\
  &:=\lambda V_{1}+V_0.
\end{split}
\end{equation}

\begin{proposition}\label{prop:frame}
 Let $M$ be an arbitrary  Riemann surface different from $S^2$ and $U$ a contractible open subset of $M$.
Let $\mathbb{F}(z, \bar{z},\lambda),  \lambda\in S^1, z \in U, $ be a solution to the system \eqref{eq:mathbbF}.
Then $[\mathbb{F}(z, \bar{z}, \lambda)e_3]$ gives a minimal Lagrangian surface defined on $U$ with values in $\mathbb{C}P^2$ and 
with the metric $g=2e^{u}dzd\bar{z}$ and the Hopf differential $\Psi^{\nu}=\nu \psi dz^3$ with $i\lambda^3 \nu=1$.

Conversely, suppose $f^{\nu}:M\rightarrow \mathbb{C}P^2$ is a conformal parametrization of
a minimal Lagrangian surface in $\mathbb{C}P^2$ with the metric $g=2e^{u}dzd\bar{z}$ and Hopf differential
$\Psi^{\nu}=\nu\psi dz^3$. Then  for any open, contractible subset $U$ of $M$ there exists  a  frame $\mathbb{F}: U \rightarrow SU(3)$
satisfying \eqref{eq:mathbbF} with $i\lambda^3 \nu=1$. This frame is unique if we choose a base point $z_0$ and normalize  $\mathbb{F}(z_0, \bar{z}_0,\lambda) = I$.
\end{proposition}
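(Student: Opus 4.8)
The plan is to read off the entire geometric dictionary directly from \eqref{eq:mathbbF}, treating both directions as the correspondence between the columns of $\mathbb{F}$ and the data $(\f,u,\psi)$. For the forward direction I would first check that, for $\lambda\in S^1$, the right-hand sides $A:=\lambda^{-1}U_{-1}+U_0$ and $B:=\lambda V_1+V_0$ satisfy $\tr A=0$ and $B=-A^{*}$; this uses $\overline{\lambda^{-1}}=\lambda$ together with the reality of $u$ (so $\overline{u_z}=u_{\bar z}$), which gives $V_1=-U_{-1}^{*}$ and $V_0=-U_0^{*}$. Hence the Maurer--Cartan form of $\mathbb{F}$ is $\mathfrak{su}(3)$-valued and $\mathbb{F}\in SU(3)$, so its columns $(\Phi_1,\Phi_2,\Phi_3)$ are Hermitian orthonormal. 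Setting $\f:=\mathbb{F}e_3=\Phi_3$ and reading off \eqref{eq:mathbbF} gives $\f_z=i\lambda^{-1}e^{u/2}\Phi_1$ and $\f_{\bar z}=i\lambda e^{u/2}\Phi_2$; orthonormality then yields the horizontality \eqref{horizontal} and the conformality relations \eqref{fconf1} at once, so that $\f$ is a horizontal (Legendrian) lift, $[\f]$ is a conformal Lagrangian immersion, and the induced metric is $g=2e^{u}dzd\bar z$.

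For minimality and the Hopf differential I would differentiate once more. A direct computation from \eqref{eq:mathbbF} collapses to $\f_{z\bar z}=-e^{u}\f$, i.e.\ $\f_{z\bar z}$ is purely vertical (a multiple of $\f$); its horizontal part, which carries the mean curvature of $[\f]$, therefore vanishes, proving minimality. Similarly, differentiating $\f_z$ and pairing with $\overline{\f_{\bar z}}$ gives $\f_{zz}\cdot\overline{\f_{\bar z}}=-i\lambda^{-3}\psi$, and since $i\lambda^{3}\nu=1$ means $\nu=-i\lambda^{-3}$, the Hopf differential of $[\f]$ is $\nu\psi\,dz^3=\Psi^{\nu}$, as asserted.

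For the converse I would reverse this construction. Over the contractible set $U$ the horizontal-lift theorem supplies a lift $\f$ of $f^{\nu}$, unique up to a factor $\delta\in S^1$, and the orthonormal frame $\mathcal{F}=(e^{-u/2}\f_z,e^{-u/2}\f_{\bar z},\f)$ solves \eqref{eq:frame1}--\eqref{eq:UV1} with the $\psi$-entry replaced by the Hopf coefficient $\nu\psi$; I normalize $\det\mathcal{F}=-1$. Putting $\mathbb{F}=\mathcal{F}\,\di(-i\lambda,-i\lambda^{-1},1)$ with $i\lambda^{3}\nu=1$ (note $\det\di(-i\lambda,-i\lambda^{-1},1)=-1$, so $\det\mathbb{F}=1$), conjugation of the matrices $\mathcal{U},\mathcal{V}$ of \eqref{eq:UV1}, with $\psi$ replaced by $\nu\psi$, by this constant-in-$z$ diagonal matrix reproduces exactly \eqref{eq:mathbbF}, the key step being that $\lambda^{2}\nu=-i\lambda^{-1}$ converts the $\nu\psi$-entry into the $\lambda^{-1}$-coefficient of $U_{-1}$.

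Finally, for uniqueness I would invoke the standard ODE argument: if $\mathbb{F}_1$ and $\mathbb{F}_2$ both satisfy \eqref{eq:mathbbF} with the same right-hand side, then $\mathbb{F}_2\mathbb{F}_1^{-1}$ has vanishing $z$- and $\bar z$-derivatives, hence equals a constant $C\in SU(3)$, and the normalization $\mathbb{F}_i(z_0,\bar z_0,\lambda)=I$ forces $C=I$. The step I expect to be most delicate is the bookkeeping: carrying the two parameters $\lambda$ and $\nu$ consistently through the gauge via $i\lambda^3\nu=1$ while simultaneously maintaining the reality condition $B=-A^{*}$ that keeps $\mathbb{F}$ in $SU(3)$. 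By contrast, minimality --- which might look like the hard analytic point --- falls out cleanly from the algebraic identity $\f_{z\bar z}=-e^{u}\f$.
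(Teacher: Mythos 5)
Your proposal is correct and follows essentially the same route as the paper: the paper states this proposition without a separate proof because its content is exactly the derivation in Subsection 2.5 (horizontal lift, frame $\mathcal{F}$ with $\det\mathcal{F}=-1$, the gauge \eqref{F} with $i\lambda^3\nu=1$ turning \eqref{eq:UV1} into \eqref{eq:mathbbF}), and your forward direction is the straightforward verification of horizontality, conformality, $\f_{z\bar z}=-e^{u}\f$ and $\f_{zz}\cdot\overline{\f_{\bar z}}=\nu\psi$ from the columns of $\mathbb{F}$. The only implicit point worth noting is that the forward direction needs $\mathbb{F}$ to take values in $SU(3)$, which follows from the skew-Hermitian trace-free Maurer--Cartan form together with a unitary initial value, as both you and the paper tacitly assume.
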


We have already pointed out that one can interpret the frames above as elements of the loop group $\Lambda SU(3)$. It actually turns out that they belong to a smaller, twisted, loop group.

\subsection{The loop group method for minimal Lagrangian surfaces}

 Let $\sigma$ denote the  automorphism of $G^{\mathbb{C}}=SL(3,\mathbb{C})$ of order $6$ defined by
\begin{equation*}
\sigma: g\mapsto P (g^t)^{-1} P^{-1}, \quad \text{ where } 
P=\left(
  \begin{array}{ccc}
    0 & \epsilon^2 & 0 \\
    \epsilon^4 & 0 & 0 \\
    0 &0 & 1\\
  \end{array}
\right), \text{ with } \epsilon=e^{\pi i/3}.
\end{equation*}
Let $\tau$ denote the anti-holomorphic involution of $G^{\mathbb{C}}=SL(3,\mathbb{C})$ which defines  the real form $G=SU(3)$, 
given by $$\tau(g):=(\bar{g}^t)^{-1}.$$
Then on the Lie algebra level the corresponding automorphism $\sigma$ of order $6$ and 
the anti-holomorphic automorphism $\tau$
of $\mathfrak{g}^{\mathbb{C}}=sl(3,\mathbb{C})$ are
\begin{equation*}
\sigma: \xi \mapsto -P\xi^t P^{-1}, \quad \tau: \xi \mapsto -\bar{\xi}^t.
\end{equation*}

Explicitly the eigenspaces $\mathfrak{g}_k^\C$ of $\sigma$ with respect to the eigenvalue $\epsilon^k$ in $sl(3,\mathbb{C})$ are given as follows
\begin{equation*}
\begin{split}
\mathfrak{g}_0^\C&=\left\{
                    \begin{pmatrix}
                    a &  &  \\
                     & -a &  \\
                     &  & 0 \\
                \end{pmatrix}
                \mid a\in \C
\right\},
\quad
\mathfrak{g}_1^\C=\left\{\begin{pmatrix}
                    0 & b & 0 \\
                     0 & 0 & a \\
                     a & 0 & 0 \\
                  \end{pmatrix}\mid a,b\in\C
\right\},
\\
\mathfrak{g}_2^\C&=\left\{
                  \begin{pmatrix}
                    0 & 0& a  \\
                     0& 0 & 0 \\
                     0& -a & 0 \\
                  \end{pmatrix} \mid a\in \C
              \right\},
\quad
\mathfrak{g}_3^\C=\left\{
                  \begin{pmatrix}
                    a &  &  \\
                      & a &  \\
                      &  & -2a \\
                  \end{pmatrix} \mid a\in \C
              \right\},\\
\mathfrak{g}_4^\C&=\left\{
                  \begin{pmatrix}
                    0 & 0 & 0  \\
                     0& 0 & a \\
                     -a & 0 & 0 \\
                  \end{pmatrix} \mid a\in \C
            \right\},
\quad
\mathfrak{g}_5^\C=\left\{
                  \begin{pmatrix}
                    0 & 0 & a \\
                     b & 0 & 0 \\
                     0 & a & 0 \\
                  \end{pmatrix} \mid a, b\in \C 
                \right\}.
\end{split}
\end{equation*}

In view of (\ref{eq:mathbbF}) and the eigenspaces stated just above one is led to consider the twisting automorphism
\begin{equation}
(\sigma (g))(\lambda) = \sigma ( g( \epsilon^{-1} \lambda)).
\end{equation}

Then the $\sigma$-twisted loop group is defined as in the beginning of this section
as fixed point set of this twisting automorphism. Now it is easy to verify

\begin{proposition}
The frames $\mathbb{F} (z, \bar{z}, \lambda)$ satisfying 
$\mathbb{F} (z_0, \bar{z}_0, \lambda) = I$ are elements of the twisted loop group
$\Lambda SU(3)_\sigma$. 
\end{proposition}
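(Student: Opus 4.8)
The plan is to verify the two defining conditions of $\Lambda SU(3)_\sigma$ in turn: that $\mathbb{F}(z,\bar z,\lambda)\in SU(3)$ for each $\lambda\in S^1$, and that $\mathbb{F}$ is fixed by the twisting automorphism, i.e. $\mathbb{F}(z,\bar z,\epsilon\lambda)=\sigma(\mathbb{F}(z,\bar z,\lambda))$. The first condition has essentially been recorded already: the orthonormality of $\mathcal{F}$, the fact that the diagonal gauge in \eqref{F} is unitary for $\lambda\in S^1$, and the normalization $\det\mathbb{F}=1$ together place $\mathbb{F}(\cdot,\cdot,\lambda)$ in $SU(3)$. So the substance of the argument is the twisting relation, which I would establish by a uniqueness argument for the linear system \eqref{eq:mathbbF}.

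Fixing $\lambda$ and regarding everything as a function of $(z,\bar z)$, I would introduce the two candidate frames $G(z,\bar z,\lambda):=\mathbb{F}(z,\bar z,\epsilon\lambda)$ and $H(z,\bar z,\lambda):=\sigma(\mathbb{F}(z,\bar z,\lambda))=P(\mathbb{F}(z,\bar z,\lambda)^t)^{-1}P^{-1}$, and show that they solve the same initial value problem. At the base point both reduce to the identity: $G(z_0,\bar z_0,\lambda)=\mathbb{F}(z_0,\bar z_0,\epsilon\lambda)=I$ and $H(z_0,\bar z_0,\lambda)=\sigma(I)=PIP^{-1}=I$. It then remains to match their Maurer--Cartan forms. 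For $G$ this is immediate from \eqref{eq:mathbbF} with $\lambda$ replaced by $\epsilon\lambda$, giving $G^{-1}G_z=\epsilon^{-1}\lambda^{-1}U_{-1}+U_0$ and $G^{-1}G_{\bar z}=\epsilon\lambda V_1+V_0$. For $H$ a short computation using $(\mathbb{F}^t)_z(\mathbb{F}^t)^{-1}=(\mathbb{F}^{-1}\mathbb{F}_z)^t$ gives $H^{-1}H_z=-P(\lambda^{-1}U_{-1}+U_0)^tP^{-1}$, which is exactly the Lie-algebra automorphism $\xi\mapsto-P\xi^tP^{-1}$ applied to $\lambda^{-1}U_{-1}+U_0$, and likewise $H^{-1}H_{\bar z}=-P(\lambda V_1+V_0)^tP^{-1}$.

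The crux is then the observation that the gauging in \eqref{F} was arranged precisely so that $U_{-1}$ and $V_1$ land in the correct $\sigma$-eigenspaces. Comparing the explicit matrices in \eqref{eq:mathbbF} with the eigenspace description of $\sigma$, one reads off $U_{-1}\in\mathfrak{g}_5^{\mathbb{C}}$ (matching the pattern of $\mathfrak{g}_5^{\mathbb{C}}$ with $a=ie^{u/2}$, $b=-i\psi e^{-u}$) and $V_1\in\mathfrak{g}_1^{\mathbb{C}}$, while $U_0,V_0\in\mathfrak{g}_0^{\mathbb{C}}$. Hence the algebra automorphism multiplies $U_{-1}$ by $\epsilon^5=\epsilon^{-1}$, multiplies $V_1$ by $\epsilon$, and fixes $U_0,V_0$, so that $-P(\lambda^{-1}U_{-1}+U_0)^tP^{-1}=\epsilon^{-1}\lambda^{-1}U_{-1}+U_0$ and analogously $-P(\lambda V_1+V_0)^tP^{-1}=\epsilon\lambda V_1+V_0$. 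These are exactly the Maurer--Cartan forms of $G$. Since $G$ and $H$ satisfy the same integrable linear system with the same value at $(z_0,\bar z_0)$, uniqueness of solutions forces $G\equiv H$, which is the desired twisting relation.

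I expect the only genuine obstacle to be the matrix bookkeeping in confirming the eigenspace placement of $U_{-1}$ (that it sits in $\mathfrak{g}_5^{\mathbb{C}}$ rather than $\mathfrak{g}_1^{\mathbb{C}}$) and in tracking the signs and transposes through $\sigma(\xi)=-P\xi^tP^{-1}$; once these placements are checked, the eigenvalues $\epsilon^{-1}$ and $\epsilon$ are forced and the uniqueness conclusion is routine.
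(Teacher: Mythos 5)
Your verification is correct: the unitarity and determinant normalization place $\mathbb{F}(\cdot,\cdot,\lambda)$ in $SU(3)$, the eigenspace placements $U_{-1}\in\mathfrak{g}_5^{\mathbb{C}}$, $V_1\in\mathfrak{g}_1^{\mathbb{C}}$, $U_0,V_0\in\mathfrak{g}_0^{\mathbb{C}}$ are exactly right, and the uniqueness argument for the initial value problem cleanly yields the twisting relation $\mathbb{F}(z,\bar z,\epsilon\lambda)=\sigma(\mathbb{F}(z,\bar z,\lambda))$. The paper leaves this as an ``easy to verify'' claim, and your argument is precisely the intended verification, so nothing further is needed.
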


Using loop group terminology, we can state (refer to \cite{MM}):

\begin{proposition}
Let $f: \D \rightarrow \mathbb{C}P^2$ be a conformal parametrization 
of a contractible Riemann surface and $\f$ its horizontal lift.
Then the following statements are equivalent:
\begin{enumerate}
\item $f$ is minimal Lagrangian.
\item There exists a frame $\mathbb{F}: \D \rightarrow SU(3)$ which is primitive harmonic relative to $\sigma$. 
\item  There exists an extend frame $\F: \D\rightarrow \Lambda SU(3)_{\sigma}$ such that $\mathbb{F}^{-1}d\mathbb{F} =(\lambda^{-1} U_{-1}+U_0)dz+(\lambda V_1+V_0)d\bar{z} 
\subset \Lambda su(3)_{\sigma}$ is a one-parameter family of flat connections for all $\lambda \in \C^*$.
\end{enumerate}
\end{proposition}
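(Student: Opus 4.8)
The plan is to prove the two equivalences $(1)\Leftrightarrow(3)$ and $(2)\Leftrightarrow(3)$, taking $(3)$ as the pivot, since most of the needed machinery is already in place: Proposition~\ref{prop:frame} links minimal Lagrangian immersions to the system \eqref{eq:mathbbF}, the eigenspace table for $\sigma$ on $\sl$ pins down the $\lambda$-structure of the connection, and the dictionary of \cite{DPW} relates extended frames to primitive harmonic maps. Throughout I would work with the gauged frame $\mathbb{F}(\lambda)=\mathcal{F}(\nu)\,\di(-i\lambda,-i\lambda^{-1},1)$ of \eqref{F}, which by the preceding Proposition already lies in $\Lambda\SU_{\sigma}$ once $i\lambda^{3}\nu=1$.

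For $(1)\Rightarrow(3)$ I would start from the horizontal lift $\f$, form the Hermitian frame $\mathcal{F}=(e^{-u/2}\f_z,e^{-u/2}\f_{\bar z},\f)$ and its associated family $\mathcal{F}(\nu)$ obtained by replacing $\psi$ with $\nu\psi$ in \eqref{eq:UV1}, and conjugate its Maurer--Cartan form by $\di(-i\lambda,-i\lambda^{-1},1)$. Using $i\lambda^{3}\nu=1$, equivalently $\lambda^{2}\nu=-i\lambda^{-1}$, this conjugation pushes the off-diagonal $(1,0)$-entries into the single coefficient of $\lambda^{-1}$ and reproduces exactly \eqref{eq:mathbbF}, that is $\alpha_\lambda=(\lambda^{-1}U_{-1}+U_0)\,dz+(\lambda V_1+V_0)\,d\bar z$. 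It then remains to check that $\alpha_\lambda$ is flat for all $\lambda\in\C^{*}$: substituting into the zero-curvature relation $(\lambda^{-1}U_{-1}+U_0)_{\bar z}-(\lambda V_1+V_0)_z=[\lambda^{-1}U_{-1}+U_0,\lambda V_1+V_0]$ and collecting powers of $\lambda$, the coefficient of $\lambda^{-1}$ reduces to $\psi_{\bar z}=0$, the coefficient of $\lambda^{0}$ to $u_{z\bar z}+e^{u}-e^{-2u}|\psi|^{2}=0$, and the coefficient of $\lambda^{1}$ gives the conjugate equation $\bar\psi_z=0$, which is equivalent to the first via $\tau$. These are exactly \eqref{eq:mLsurfaces}--\eqref{eq:Codazzi}, which hold since $f$ is minimal Lagrangian; as $\D$ is contractible $\alpha_\lambda$ then integrates to a global extended frame, giving $(3)$. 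The converse $(3)\Rightarrow(1)$ runs the same computation in reverse: flatness for every $\lambda$ forces \eqref{eq:mLsurfaces}--\eqref{eq:Codazzi}, so $[\mathbb{F}e_3]$ is minimal Lagrangian by Proposition~\ref{prop:frame}.

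For $(2)\Leftrightarrow(3)$ I would read off from the eigenspace decomposition of $\sl$ that $U_{-1}\in\mathfrak{g}_{-1}^{\C}=\mathfrak{g}_5^{\C}$, that $V_1\in\mathfrak{g}_1^{\C}$, and that $U_0,V_0\in\mathfrak{g}_0^{\C}=\mathfrak{k}^{\C}$. Hence $\alpha_\lambda$ has precisely the shape $\lambda^{-1}\alpha_{\mathfrak{m}}'+\alpha_{\mathfrak{k}}+\lambda\alpha_{\mathfrak{m}}''$ of the extended frame of a primitive harmonic map, with $\alpha_{\mathfrak{m}}'=U_{-1}\,dz$ taking values in $\mathfrak{g}_{-1}$. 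Invoking the correspondence between extended frames and primitive harmonic maps recalled in Section~\ref{sec:Pre}, evaluation at a fixed $\lambda$ produces a primitive harmonic frame $\mathbb{F}\colon\D\to\SU$, which gives $(3)\Rightarrow(2)$; conversely the primitivity condition $\alpha_{\mathfrak{m}}'\in\mathfrak{g}_{-1}$ is exactly what permits insertion of the spectral parameter and yields the flat family of $(3)$.

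The main obstacle is not a single hard estimate but the bookkeeping that makes the three descriptions coincide, and the step I would check most carefully is that the gauge \eqref{F} is calibrated so that the off-diagonal $(1,0)$-part lands in the single eigenspace $\mathfrak{g}_{-1}$. Before gauging, the off-diagonal part of $\mathcal{U}$ in \eqref{eq:UV1} does \emph{not} lie in one $\sigma$-eigenspace, so primitivity would fail; it is exactly the factor $\di(-i\lambda,-i\lambda^{-1},1)$ together with $i\lambda^{3}\nu=1$ that repairs this and equalizes the two $e^{u/2}$ entries. The remaining point to confirm is that collecting $\lambda$-powers in the zero-curvature relation yields \eqref{eq:mLsurfaces}--\eqref{eq:Codazzi} and no further constraints; both are direct matrix computations once the eigenspace table is in hand.
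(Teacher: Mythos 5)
Your proposal is correct and follows exactly the route the paper has already prepared: the gauge \eqref{F} with $i\lambda^3\nu=1$ turning \eqref{eq:UV1} into \eqref{eq:mathbbF}, the identification of $U_{-1}\in\mathfrak{g}_{-1}^{\C}$, $V_1\in\mathfrak{g}_1^{\C}$, $U_0,V_0\in\mathfrak{g}_0^{\C}$ from the eigenspace table, and the fact that collecting $\lambda$-powers in the zero-curvature equation reproduces precisely \eqref{eq:mLsurfaces}--\eqref{eq:Codazzi}. The paper itself states this proposition without proof (deferring to \cite{MM}), but your argument is the standard one and is consistent with all the preparatory computations of Subsections 2.4--2.6, including Proposition \ref{prop:frame}.
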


 The general Iwasawa decomposition theorem  stated above takes in our case, i.e. for the groups $\Lambda SL(3,\mathbb{C})_{\sigma}$  and $\Lambda SU(3)_{\sigma}$, the following explicit form:

\begin{theorem}[Iwasawa decomposition theorem of $\Lambda SL(3,\mathbb{C})_{\sigma}$]\label{Thm:Iwasawa}
Multiplication $\Lambda SU(3)_{\sigma}\times \Lambda^{+} SL(3, \C)_{\sigma} \rightarrow \Lambda SL(3, \C)_{\sigma}$ is a diffeomorphism onto. Explicitly, 
every element $g\in \Lambda SL(3,\mathbb{C})_{\sigma}$ can be represented in the form
$g=h V_{+}$ with $h\in \Lambda SU(3)_{\sigma}$ and $V_{+}\in \Lambda^{+} SL(3,\mathbb{C})_{\sigma}$.
One can assume without loss of generality that $V_{+} (\lambda=0)$ has only positive diagonal entries.
In this case the decomposition is unique.
 \end{theorem}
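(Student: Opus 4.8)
The plan is to deduce the explicit $\SU$ Iwasawa decomposition from the general Iwasawa decomposition theorem already stated, namely that multiplication $\Lambda G_{\sigma} \times \Lambda^{+}_B G_{\sigma}^{\mathbb C} \to \Lambda G_{\sigma}^{\mathbb C}$ is a real-analytic diffeomorphism, applied to $G = \SU$ and $G^{\mathbb C} = \SL$. The only gap between that abstract statement and the desired explicit form concerns the second factor: the abstract theorem produces a factor in $\Lambda^{+}_B G_{\sigma}^{\mathbb C}$, whose value at $\lambda = 0$ lies in the fixed Borel $B$ from the Iwasawa decomposition $K^{\mathbb C} = KB$ of the stabilizer, whereas the explicit statement asks for a factor in the larger group $\Lambda^{+} SL(3,\mathbb{C})_{\sigma}$ subject only to the normalization that $V_{+}(0)$ be diagonal with positive entries. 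So the real content is to reconcile these two normalizations and to verify that the surjectivity (``onto'', with no big-cell restriction) genuinely holds here.

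First I would identify the relevant data concretely: here $K = \stabSU$ is the stabilizer, $K^{\mathbb C} = \mathrm{S}(\mathrm{GL}_1 \times \mathrm{GL}_2)\C$, and $\mathfrak{g}_0^{\mathbb C}$ is the one-dimensional diagonal algebra displayed in the excerpt. I would spell out that an element of $\Lambda^{+} SL(3,\mathbb{C})_{\sigma}$ is a loop extending holomorphically to $\mathbf D$ with $V_{+}(0) \in K^{\mathbb C}$, and that the finite-order twisting forces $V_{+}(0)$ to lie in the $\mathfrak g_0^{\mathbb C}$-type subgroup, i.e. block diagonal of the form $\di(a, b, c)$-type consistent with $\mathfrak{g}_0^{\mathbb C}$ together with the determinant-one and stabilizer conditions. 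Since $K^{\mathbb C}$ is (up to center and twisting) essentially abelian in the relevant diagonal directions, the Iwasawa factorization $K^{\mathbb C} = KB$ at the level of $\lambda = 0$ amounts to the classical fact that any invertible such matrix is uniquely a product of a unitary element of $K$ and an upper-triangular (here diagonal, after imposing the twisting) factor with positive diagonal entries.

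The key steps, in order, are: (1) invoke the general Iwasawa theorem to write $g = h\, W_{+}$ with $h \in \Lambda \SU_{\sigma}$ and $W_{+} \in \Lambda^{+}_B SL(3,\mathbb{C})_{\sigma}$; (2) observe that $W_{+}(0) \in B$, and perform the finite-dimensional positive-diagonal adjustment: write $W_{+}(0) = k_0\, d_0$ with $k_0 \in K \cap (\text{diagonal})$ and $d_0$ diagonal with positive entries, the twisting pinning down the admissible diagonal shapes; (3) absorb $k_0$ into the first factor by setting $h' = h k_0 \in \Lambda \SU_{\sigma}$ and $V_{+} = k_0^{-1} W_{+}$, so that $g = h'\, V_{+}$ with $V_{+}(0) = d_0$ diagonal and positive; (4) prove uniqueness by noting that any two such decompositions $h_1 V_1 = h_2 V_2$ give $h_2^{-1} h_1 = V_2 V_1^{-1} \in \Lambda \SU_{\sigma} \cap \Lambda^{+} SL(3,\mathbb{C})_{\sigma}$, which by the standard argument (a loop unitary on $S^1$ extending holomorphically to $\mathbf D$ is constant) lies in $\SU \cap (\text{positive-diagonal-at-}0)$, forcing it to be the identity; (5) conclude surjectivity directly from the surjectivity already asserted in the general theorem.

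The main obstacle I anticipate is Step (2) together with the uniqueness in Step (4): one must check carefully that the $\sigma$-twisting is compatible with the $KB$ splitting, i.e. that reducing $W_{+}(0)$ to positive-diagonal form keeps both factors inside the twisted loop groups and does not destroy the $\det = 1$ or the block structure coming from $\stabSU$. In particular, the subtle point is that the positive-diagonal normalization must be shown to single out a unique representative, which requires knowing precisely that $\SU \cap \Lambda^{+} SL(3,\mathbb{C})_{\sigma}$ with positive diagonal at $\lambda = 0$ reduces to $\{I\}$; this is where I would lean on the fact that a $\Lambda \SU_{\sigma}$ loop that also extends holomorphically to $\mathbf D$ must be independent of $\lambda$ (by the maximum principle applied coordinatewise, since unitarity bounds it on $S^1$), and then the finite-dimensional $SU(3) = S(U_1\times U_2)\cdot(\text{positive diagonal})$ uniqueness closes the argument.
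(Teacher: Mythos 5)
Your overall strategy is sound and is essentially the only argument available here: the paper itself gives no proof of Theorem \ref{Thm:Iwasawa}, presenting it as the specialization to $G=SU(3)$ with $\sigma$ of order six of the general Iwasawa splitting quoted from Pressley--Segal \cite{PS} and \cite{DPW}. What you supply --- passing from the $\Lambda^{+}_B$ normalization of the general theorem to the positive-diagonal normalization of $V_+(0)$, and checking uniqueness --- is exactly the reduction the paper leaves implicit, and your steps (1)--(5) carry it out in the right order. Two corrections are needed. First, for the order-six automorphism $\sigma$ the stabilizer is \emph{not} $\mathrm{S}(\mathrm{U}_1\times\mathrm{U}_2)$: since $\mathfrak{g}_0^{\mathbb C}=\{\mathrm{diag}(a,-a,0)\}$, one has $K^{\mathbb C}\cong\{\mathrm{diag}(t,t^{-1},1):t\in\C^*\}$ and $K\cong U(1)$ (the paper uses $K=U(1)$ explicitly in Section \ref{Sec:symmetries}); your argument survives because you immediately restrict $V_+(0)$ to the $\mathfrak g_0^{\mathbb C}$-shape anyway, after which step (2) is just the polar decomposition $t=e^{i\theta}r$ of a single nonzero complex number. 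Second, the justification of the key uniqueness lemma --- that an element of $\Lambda SU(3)_\sigma\cap\Lambda^{+}SL(3,\C)_\sigma$ is constant ``by the maximum principle applied coordinatewise, since unitarity bounds it on $S^1$'' --- does not work as stated: bounded holomorphic entries on $\mathbf D$ need not be constant. The correct argument is the reflection/Liouville one: if $g$ extends holomorphically to $\mathbf D$ and is unitary on $S^1$, then $\lambda\mapsto(\overline{g(1/\bar\lambda)}^{\,t})^{-1}$ is holomorphic on $\E$ and agrees with $g$ on $S^1$, so $g$ extends to all of $\C P^1$ and is constant by Liouville; the constant is then a unitary matrix whose value at $\lambda=0$ is positive diagonal, hence equals $I$. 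With that repair the proof is complete.
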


\subsection{Wu's formula for minimal Lagrangian surfaces in $\mathbb{C}P^2$} \label{Wu's formula}

 The relation between normalized potentials and minimal Lagrangian immersions is a priori  quite abstract. It turns out, however, that there is a simple relation.

Let $f:\D \rightarrow \C P^2$ be a minimal Lagrangian immersion 
and $\mathbb{F}:\mathbb{D}\rightarrow \Lambda SU(3)_{\sigma}$ an extended frame for $f$,
which means that $\mathbb{F}^{-1}d\mathbb{F}=(\lambda^{-1}U_{-1}+U_0)dz+(\lambda V_1+V_0)d\bar{z}
\in  \Lambda \mathfrak{su}(3)_{\sigma}$.
In Subsection \ref{subsection:loop method} we have explained the basic construction scheme of normalized potentials.
Thus we perform a Birkhoff decomposition and obtain
$$\mathbb{F}=\mathbb{F}_{-}\mathbb{F}_{+}$$
for $z\in \mathbb{D}\backslash S$ and $S\subset \mathbb{D}$ is a discrete subset,
where
\begin{eqnarray}
\mathbb{F}_{+}&=&\mathbb{F}_0(I+\lambda \mathbb{F}_1+\cdots )\in \Lambda^{+}SL(3,\mathbb{C})_{\sigma},\\
\mathbb{F}_{-}&=&I+\lambda^{-1}B+\cdots \in \Lambda^{-}_{*}SL(3,\mathbb{C})_{\sigma}. \label{eq:F-}
\end{eqnarray}
Then $\mathbb{F}_{-}=\mathbb{F}\mathbb{F}_{+}^{-1}$ and
\begin{equation*}
\begin{split}
\eta&:=\mathbb{F}_{-}^{-1}d\mathbb{F}_{-}=\mathbb{F}_{+}(\mathbb{F}^{-1}d\mathbb{F})\mathbb{F}_{+}^{-1}-d\mathbb{F}_{+}\mathbb{F}_{+}^{-1}\\
&=\mathbb{F}_{+}((\lambda^{-1}U_{-1}+U_0)dz+(\lambda V_1+V_0)d\bar{z})\mathbb{F}_{+}^{-1}-d\mathbb{F}_{+}\mathbb{F}_{+}^{-1}\\
&=\lambda^{-1}\mathbb{F}_0U_{-1}\mathbb{F}_0^{-1}dz=:\lambda^{-1} A_{-1}dz.
\end{split}
\end{equation*}
Since $\eta$ is integrable, it follows that the matrix 
$$A_{-1}:=\mathbb{F}_0U_{-1}\mathbb{F}_0^{-1}$$
only depends on $z$, and actually is meromorphic in $z$. 

As usual, we assume $\mathbb{F}(0,0,\lambda)=I$ at the base point $z=\bar{z}=0$. 
Then we can expand the real analytic matrix functions $\F$, $\F_-$, $\F_+$ into power series in $z$ and $\bar{z}$. In these expansions we can set $\bar{z} =0$.
Then  from $\mathbb{F}=\mathbb{F}_{-}\mathbb{F}_{+}$ we obtain
$$\mathbb{F}(z,0,\lambda)=\mathbb{F}_{-}(z,0,\lambda)\mathbb{F}_{+}(z,0,\lambda).$$
Since
$$\mathbb{F}(z,0,\lambda)^{-1}\frac{d}{dz}\mathbb{F}(z,0,\lambda)=\lambda^{-1}U_{-1}(z,0)+U_0(z,0),$$
we infer
$\mathbb{F}(z,0,\lambda)\in \Lambda^{-}SL(3,\mathbb{C})_{\sigma}$.

Remembering that $\mathbb{F}_{-}(z,0,\lambda)$ is given by \eqref{eq:F-},
we conclude that
$\mathbb{F}_{+}(z,0,\lambda)=\mathbb{F}_0(z, 0)$ holds. In particular, it turns out that this matrix is independent of $\lambda$, i.e.,
$$\mathbb{F}(z,0,\lambda)=\mathbb{F}_{-}(z,0, \lambda)\mathbb{F}_0(z,0).$$

Forming the Maurer-Cartan forms on both sides we obtain
\begin{eqnarray*}
\lambda^{-1}U_{-1}(z,0)+U_0(z,0)&=&\mathbb{F}(z,0,\lambda)^{-1}\frac{d}{dz}\mathbb{F}(z,0,\lambda)\\
&=&\mathbb{F}_0^{-1}(\mathbb{F}_{-}^{-1}\frac{d}{dz}\mathbb{F}_{-})\mathbb{F}_0+\mathbb{F}_0^{-1}\frac{d\mathbb{F}_0}{dz}\\
&=&\lambda^{-1}\mathbb{F}_0^{-1}A_{-1}\mathbb{F}_0 +\mathbb{F}_0^{-1}\frac{d\mathbb{F}_0}{dz}.
\end{eqnarray*}
Thus we have
\begin{eqnarray*}
A_{-1}&=&\mathbb{F}_0 U_{-1}(z,0)\mathbb{F}_0^{-1},\\
\mathbb{F}_0^{-1}\frac{d\mathbb{F}_0}{dz}&=&U_0(z,0).
\end{eqnarray*}
In view of  $\mathbb{F}_0(0,0)=I$ and the specific forms of $U_0$ and $U_{-1}$ as stated in \eqref{eq:mathbbF},  
we obtain
$$\mathbb{F}_0(z,0)=\begin{pmatrix}
e^{\frac{u}{2}(z,0)-\frac{u}{2}(0,0)}& &\\
& e^{-\frac{u}{2}(z,0)+\frac{u}{2}(0,0)}&\\
& & 1
\end{pmatrix}$$
and finally we also obtain the formula for the normalized potential of $f$:
\begin{align} 
\eta&=\lambda^{-1}A_{-1}dz\nonumber\\
&=\lambda^{-1}\begin{pmatrix}
0&0&ie^{u(z,0)-\frac{u}{2}(0,0)}\\
-i\psi(z)e^{-2u(z,0)+u(0,0)}&0&0\\
0&ie^{u(z,0)-\frac{u}{2}(0,0)}&0
\end{pmatrix}
dz. \label{eq:Wu-eta}
\end{align}
Note that $\psi$ is holomorphic here.

\begin{proposition} [Wu's formula (\cite{Wu})]\label{prop:Wu's formula}
Let $f:\D \rightarrow \mathbb{C}P^2$ be a minimal Lagrangian immersion. Then with the notation of 
\eqref{eq:mathbbF} the normalized potential $\eta$ of $f$ with respect to the base point $z=0$ is given by the formula \eqref{eq:Wu-eta}.
\end{proposition}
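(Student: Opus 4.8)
The normalized potential is, by definition, read off from the Birkhoff splitting $\mathbb{F}=\mathbb{F}_-\mathbb{F}_+$ of the extended frame, so the plan is to compute $\eta=\mathbb{F}_-^{-1}d\mathbb{F}_-$ and match it against the right-hand side of \eqref{eq:Wu-eta}. First I would rewrite $\eta$ using the gauge action, $\eta=\mathbb{F}_+(\mathbb{F}^{-1}d\mathbb{F})\mathbb{F}_+^{-1}-d\mathbb{F}_+\mathbb{F}_+^{-1}$, and substitute the known shape \eqref{eq:mathbbF} of $\mathbb{F}^{-1}d\mathbb{F}$. Since $\mathbb{F}_-\in\Lambda^-_* SL(3,\mathbb{C})_\sigma$, the normalized-potential theorem forces $\eta$ to be a $(1,0)$-form containing only the power $\lambda^{-1}$; collecting the $\lambda^{-1}$-part of the conjugation then gives $\eta=\lambda^{-1}A_{-1}\,dz$ with $A_{-1}=\mathbb{F}_0 U_{-1}\mathbb{F}_0^{-1}$, where $\mathbb{F}_0:=\mathbb{F}_+|_{\lambda=0}$. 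Integrability of $\eta$ guarantees that $A_{-1}$, and hence the relevant value of $\mathbb{F}_0$, depends only on $z$ and is meromorphic there.

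The essential step, and the one I expect to carry all the weight, is to pin down $\mathbb{F}_0$ explicitly, for which the idea is to restrict everything to the slice $\bar z=0$. Along this slice the Maurer--Cartan form of $\mathbb{F}$ reduces to $\lambda^{-1}U_{-1}(z,0)+U_0(z,0)$, which takes values in non-positive powers of $\lambda$ only, with the $\lambda^{0}$-part $U_0$ lying in $\mathfrak g_0^{\mathbb C}=\mathfrak k^{\mathbb C}$. Consequently the solution $\mathbb{F}(z,0,\lambda)$ with $\mathbb{F}(0,0,\lambda)=I$ stays inside $\Lambda^- SL(3,\mathbb{C})_\sigma$. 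Comparing this with the Birkhoff factorization $\mathbb{F}(z,0,\lambda)=\mathbb{F}_-(z,0,\lambda)\mathbb{F}_+(z,0,\lambda)$ and using $\mathbb{F}_-\in\Lambda^-_*$ from \eqref{eq:F-}, I would conclude that $\mathbb{F}_+(z,0,\lambda)=\mathbb{F}_-(z,0,\lambda)^{-1}\mathbb{F}(z,0,\lambda)$ lies in $\Lambda^-\cap\Lambda^+$; since such loops extend holomorphically over all of $S^2$, they are constant in $\lambda$ by Liouville, so $\mathbb{F}_+(z,0,\lambda)=\mathbb{F}_0(z,0)$.

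With this reduction in hand the remainder is routine. I would form Maurer--Cartan forms on both sides of $\mathbb{F}(z,0,\lambda)=\mathbb{F}_-(z,0,\lambda)\mathbb{F}_0(z,0)$ and split by powers of $\lambda$. The $\lambda^{0}$-equation yields the ordinary differential equation $\mathbb{F}_0^{-1}\frac{d\mathbb{F}_0}{dz}=U_0(z,0)=\di\!\big(\tfrac{u_z}{2},-\tfrac{u_z}{2},0\big)$, which, together with the normalization $\mathbb{F}_0(0,0)=I$ inherited from $\mathbb{F}(0,0,\lambda)=I$, integrates at once to the diagonal matrix $\di\!\big(e^{\frac{u}{2}(z,0)-\frac{u}{2}(0,0)},\,e^{-\frac{u}{2}(z,0)+\frac{u}{2}(0,0)},\,1\big)$, while the $\lambda^{-1}$-equation recovers $A_{-1}=\mathbb{F}_0 U_{-1}(z,0)\mathbb{F}_0^{-1}$. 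Finally I would perform the diagonal conjugation: each off-diagonal entry $(U_{-1})_{ij}$ is scaled by the ratio of the $i$-th and $j$-th diagonal entries of $\mathbb{F}_0$, and reading off the three nonzero positions reproduces exactly the matrix in \eqref{eq:Wu-eta}. The asserted holomorphicity of $\psi$ then follows from the meromorphicity of $A_{-1}$ in $z$, since $\psi$ enters its $(2,1)$-entry polynomially.
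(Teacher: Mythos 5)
Your proposal is correct and follows essentially the same route as the paper: Birkhoff-split the extended frame, compute $\eta=\mathbb{F}_+(\mathbb{F}^{-1}d\mathbb{F})\mathbb{F}_+^{-1}-d\mathbb{F}_+\mathbb{F}_+^{-1}=\lambda^{-1}\mathbb{F}_0U_{-1}\mathbb{F}_0^{-1}dz$, restrict to the slice $\bar z=0$ to see that $\mathbb{F}(z,0,\lambda)\in\Lambda^-SL(3,\mathbb{C})_\sigma$ and hence $\mathbb{F}_+(z,0,\lambda)=\mathbb{F}_0(z,0)$ is $\lambda$-independent, then integrate $\mathbb{F}_0^{-1}\frac{d\mathbb{F}_0}{dz}=U_0(z,0)$ and conjugate $U_{-1}$. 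Your explicit Liouville argument for $\Lambda^-\cap\Lambda^+$ merely spells out a step the paper leaves implicit.
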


\begin{remark}
(1) Wu's formula shows how the entries of the normalized potential can be expressed in terms of $u$ and $\psi$.

(2) The proof above gives an argument for small $z$. However, since $\eta$ is meromorphic on $\D$, all matrix entries have meromorphic extensions to $\D$. 

\end{remark}

\begin{example} For the Clifford torus $f: T^2_C=\C/\Lambda_C \rightarrow \mathbb{C}P^2$ (see details in Subsection \ref{subsec:miL}), one has the following commutative diagram 
\[
\begin{tikzcd}[column sep=4em,row sep=4em]
\C  \ar{r}{\f}  \ar{dr}{\tilde{f}} 
    \ar{d}[swap]{ \varPi}   &   S^5(1)   \ar{d}{\varPi_H}   \\
T^2_{C}   \ar{r}{f}  &   \C P^2
\end{tikzcd}
\]
and a horizontal lift $\f:\mathbb{C}\rightarrow S^5(1)$ given by
$$\f(z,\bar{z})=\frac{1}{\sqrt{3}} (e^{z-\bar{z}}, e^{\alpha z-\alpha^2 \bar{z}}, e^{\alpha^2 z-\alpha \bar{z}}),$$
where $\alpha=e^{\frac{2}{3}\pi i}$.
It is easy to see that $\psi=\f_{zz}\cdot\overline{\f_{\bar z}}=-1$ and $e^u=1$.
Then it follows from Wu's formula that the normalized potential of the Clifford torus is given by
\begin{equation*}
\eta=\lambda^{-1}\begin{pmatrix}
0&0&i\\
i&0&0\\
0&i&0
\end{pmatrix} dz.
\end{equation*}

We write $\eta = \lambda^{-1} A dz$ and verify 
$[A,\tau (A)]=0$. Therefore the solution to $d\F_{-} = \F_{-} $, $\F_{-}(0,\lambda) = I$ is given by $\F_{-}(z,\lambda) = \exp (z \lambda^{-1} A)$.

Performing the Iwasawa decomposition, we obtain the extended frame given by
$\F(z,\lambda) = \exp( z \lambda^{-1} A + \bar{z} \lambda \tau(A))$. 
Consider the translation 
$$z\mapsto z+\delta, \quad\quad \text{with } \delta\in \C.$$
Therefore the monodromy matrix of the frame $\F(z,\lambda)$ for this translation is given by
$$\F(z+\delta,\lambda)=M(\delta, \lambda)\F(z,\lambda),$$
where $$M(\delta,\lambda)=\exp(\delta\lambda^{-1}A+\bar{\delta} \lambda \tau(A)).$$
Then the map $f_{\lambda_0}: \C\rightarrow \C P^2$ can be defined on $\C/\delta \Z$ if and only if $f_{\lambda_0}(z+\delta)=f_{\lambda_0}(z)$,
which is equivalent to $M(\delta,\lambda_0)=cI$, where $c$ is a scalar  satisfying $c^3=1$.
Since the eigenvalues of $A$ are $i$, $i\alpha$ and $i\alpha^2$, it follows that the closing conditions for $\lambda_0\in S^1$ are
$$e^{i\lambda_0^{-1}\delta+i\lambda_0 \bar{\delta}}=e^{i\lambda_0^{-1}\alpha\delta+i\lambda_0 \alpha^2\bar{\delta}}
=e^{i\lambda_0^{-1}\alpha^2 \delta+i\lambda_0 \alpha \bar{\delta}}=c,$$
which is
\begin{eqnarray}
\mathrm{Re}(\lambda_0^{-1}\delta)&=&\frac{k\pi}{3}+l_1\pi, \label{periodcon1}\\
\mathrm{Re}(\lambda_0^{-1}\alpha\delta)&=&\frac{k\pi}{3}+l_2\pi, \label{periodcon2}\\
\mathrm{Re}(\lambda_0^{-1}\alpha^2\delta)&=&\frac{k\pi}{3}+l_3\pi, \label{periodcon3}
\end{eqnarray}
for $k=0,1$ or $2$ and $l_1,l_2,l_3\in \Z$. It is easy to see now that for any $\lambda_0 \in S^1$, 
the solutions to \eqref{periodcon1}-\eqref{periodcon3} are given by
$$\delta=\frac{2l_1-l_2-l_3}{3}\lambda_0 \pi +i\frac{l_3-l_2}{\sqrt{3}}\lambda_0 \pi.$$
Therefore, for arbitrary $\lambda_0$, we obtain the $\lambda_0=1$ lattice rotated by $\lambda_0$.
This implies the following
\begin{proposition} Every member in the associated family of the Clifford torus is still  a torus.
\end{proposition}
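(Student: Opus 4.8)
The plan is to determine, directly from the monodromy $M(\delta,\lambda_0)$, which translations $z\mapsto z+\delta$ make $f_{\lambda_0}$ descend to a quotient of $\C$, and then to observe that after factoring out the rotation by $\lambda_0$ the set of admissible periods ceases to depend on $\lambda_0$. As recalled before the statement, $f_{\lambda_0}=[\F(\cdot,\lambda_0)e_3]$ is $\delta$-periodic exactly when $M(\delta,\lambda_0)=cI$ with $c^3=1$; I would justify the scalar form by noting that $\F(z,\lambda_0)e_3$ sweeps out a full set of directions in $\C^3$ (the Clifford torus is not contained in any $\C P^1$), so $[\F(z+\delta)e_3]=[\F(z)e_3]$ for all $z$ forces $M$ to act as a scalar $c$, and $M\in\SU$ then gives $c^3=\det M=1$.

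Next I would diagonalize. Since $[A,\tau(A)]=0$ (verified above), the matrix $M(\delta,\lambda_0)=\exp(\delta\lambda_0^{-1}A+\bar\delta\lambda_0\tau(A))$ is simultaneously diagonalizable, with eigenvalues $\exp\!\big(i(\lambda_0^{-1}\alpha^{j}\delta+\lambda_0\alpha^{2j}\bar\delta)\big)$ for $j=0,1,2$ on the common eigenbasis of $A$ and $\tau(A)$. Using $|\lambda_0|=1$ and $\bar\alpha=\alpha^2$ one checks $\lambda_0^{-1}\alpha^{j}\delta+\lambda_0\alpha^{2j}\bar\delta=2\,\mathrm{Re}(\lambda_0^{-1}\alpha^{j}\delta)$, so every eigenvalue has modulus one and the condition $M=cI$ collapses to the three real conditions \eqref{periodcon1}--\eqref{periodcon3} already recorded.

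The decisive step is the substitution $w=\lambda_0^{-1}\delta$. Under it \eqref{periodcon1}--\eqref{periodcon3} become $\mathrm{Re}(w),\ \mathrm{Re}(\alpha w),\ \mathrm{Re}(\alpha^2 w)\in \tfrac{k\pi}{3}+\pi\Z$ for one common $k\in\{0,1,2\}$, and these involve no $\lambda_0$ at all. Hence the admissible $w$ form a fixed subset $\Lambda_1\subset\C$, and the admissible periods for the parameter value $\lambda_0$ are precisely $\lambda_0\Lambda_1$. Solving the conditions --- using $1+\alpha+\alpha^2=0$, which pins down $k\equiv-(l_1+l_2+l_3)\ (\mathrm{mod}\ 3)$ and reduces the system to $\mathrm{Re}\,w=\tfrac{(2l_1-l_2-l_3)\pi}{3}$ and $\mathrm{Im}\,w=\tfrac{(l_3-l_2)\pi}{\sqrt3}$ --- identifies $\Lambda_1$ as the rank-two lattice generated by $\tfrac{\pi}{3}\pm i\tfrac{\pi}{\sqrt3}$, matching the explicit solution displayed just before the statement.

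Finally, because $\lambda_0\in S^1$, multiplication by $\lambda_0$ is a Euclidean rotation, so $\lambda_0\Lambda_1$ is again a rank-two lattice; thus $f_{\lambda_0}$ factors through the torus $\C/\lambda_0\Lambda_1$, establishing the claim for every member of the associated family. I expect the only genuine work to lie in the latter half of the third step: confirming that $\Lambda_1$ is honestly of rank two, i.e. that coupling the three conditions through the single integer $k$, together with the degeneracy $\sum_j\alpha^{j}=0$, is consistent and produces two $\R$-independent periods rather than a rank $\le 1$ set.
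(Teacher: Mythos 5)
Your proof is correct and follows essentially the same route as the paper: compute the monodromy $M(\delta,\lambda_0)$ of the translation, reduce the closing condition $M=cI$, $c^3=1$, to the three real conditions \eqref{periodcon1}--\eqref{periodcon3} via the common eigenbasis of $A$ and $\tau(A)$, solve them, and observe that the resulting period set is the $\lambda_0=1$ lattice rotated by $\lambda_0$, hence still a rank-two lattice. Your additional details (the substitution $w=\lambda_0^{-1}\delta$, the fullness argument for why the monodromy must be scalar, and the explicit generators $\tfrac{\pi}{3}\pm i\tfrac{\pi}{\sqrt3}$ of $\Lambda_1$) are all sound elaborations of steps the paper leaves implicit.
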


\end{example}

\section{Vacuum solutions}
\label{sec:vacuum}

A \emph{vacuum} is an extended frame whose normalized potential is given by $\eta=\lambda^{-1}A dz$ with $A\in \mathfrak{g}_{-1}$ a constant matrix satisfying  $[A,\tau(A)]=0$ (see \cite{BuP}). To clarify what this means we consider the  constant matrix
$$A=\begin{pmatrix}
0&0&a\\
b&0&0\\
0&a&0
\end{pmatrix}\in \mathfrak{g}_{-1}. 
\quad \text{ Then }
\tau(A)=\begin{pmatrix}
0&-\bar{b}&0\\
0&0&-\bar{a}\\
-\bar{a}&0&0
\end{pmatrix},$$
and the condition $[A,\tau(A)]=0$ says $|a|^2=|b|^2$.

Let's next write  $a=i re^{i\theta}$ and  $b=ir e^{i\beta}$.
Now take the following gauge transformation
$$\begin{pmatrix}
e^{i\delta}&&\\
&e^{-i\delta}&\\
&&1
\end{pmatrix}A
\begin{pmatrix}
e^{-i\delta}&&\\
&e^{i\delta}&\\
&&1
\end{pmatrix}=\begin{pmatrix}
0&0&ire^{i(\theta+\delta)}\\
ire^{i(\beta-2\delta)}&0&0\\
0&ire^{i(\theta+\delta)}&0
\end{pmatrix}.$$
Then choose $\delta$ such that $\theta+\delta=\beta-2\delta$,
i.e., $\delta=\frac{\beta-\theta}{3}$.
Thus, $$\eta=\lambda^{-1}ire^{i\frac{2\theta+\beta}{3}}\begin{pmatrix}0&0&i\\
i&0&0\\
0&i&0\end{pmatrix}dz.$$
Finally, choose a new coordinate: $z\mapsto w = r e^{i\frac{2\theta+\beta}{3}}z$, and we obtain $$\eta=\lambda^{-1}\begin{pmatrix}0&0&i\\
i&0&0\\
0&i&0\end{pmatrix}dw.$$

 Summing up and since the gauge transformation induces  an isometry of $\C P^2$, we have
\begin{proposition}
Any vacuum can be deformed  by  gauge transformations and coordinate changes to the potential of the Clifford torus.
\end{proposition}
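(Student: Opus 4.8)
The plan is to reduce the statement to a normalization of the single constant matrix $A$ in $\eta = \lambda^{-1} A\, dz$, exploiting exactly the two freedoms named in the statement: conjugation by a gauge transformation and a holomorphic change of the conformal coordinate $z$. First I would record the shape of $A$. Since $\sigma$ has order $6$ and $-1 \equiv 5 \pmod 6$, we have $\mathfrak{g}_{-1} = \mathfrak{g}_5^{\mathbb C}$, so every vacuum matrix has the form
\begin{equation*}
A = \begin{pmatrix} 0 & 0 & a \\ b & 0 & 0 \\ 0 & a & 0 \end{pmatrix}, \qquad a, b \in \C,
\end{equation*}
where membership in $\mathfrak{g}_5^{\mathbb C}$ forces the two off-diagonal ``$a$-slots'' to carry the same entry. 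Computing $\tau(A) = -\bar{A}^t$ and imposing the vacuum condition $[A, \tau(A)] = 0$ should collapse to the single scalar relation $|a|^2 = |b|^2$; this is the only constraint I expect the Lie-theoretic hypothesis to contribute.

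Next I would spend the gauge freedom to synchronize the phases. Writing $a = i r e^{i\theta}$ and $b = i r e^{i\beta}$ with $r = |a| = |b|$, I conjugate $A$ by the diagonal element $\di(e^{i\delta}, e^{-i\delta}, 1)$. This element is $\sigma$-fixed, hence a genuine gauge in the stabilizer $K$, and since conjugation by $K = G^{\sigma}$ preserves every eigenspace $\mathfrak{g}_l^{\mathbb C}$, the gauged potential is again of vacuum type in $\mathfrak{g}_{-1}$. The two ``$a$-slots'' acquire the common phase $\theta + \delta$ while the ``$b$-slot'' acquires phase $\beta - 2\delta$, so the single choice $\delta = \tfrac{\beta - \theta}{3}$ equalizes all three nonzero entries. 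The outcome is $A = c\,A_0$, where $A_0 = \left(\begin{smallmatrix} 0 & 0 & i \\ i & 0 & 0 \\ 0 & i & 0 \end{smallmatrix}\right)$ is the Clifford matrix of the earlier Example and $c \in \C^{*}$ is explicit. Finally the holomorphic rescaling $w = c\,z$ yields $dw = c\, dz$, whence $\eta = \lambda^{-1} c A_0\, dz = \lambda^{-1} A_0\, dw$, which is precisely the normalized potential of the Clifford torus.

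I expect the main difficulty to be bookkeeping of admissibility rather than anything deep, and two points deserve care. The first is that the diagonal conjugation must count as a legitimate gauge in the loop-group sense: a constant element of $K$ inducing an isometry of $\C P^2$ that fixes the base point and keeps $\eta$ inside $\Lambda_{-1,\infty}$ in normalized form. This is what justifies the passage ``the gauge transformation induces an isometry of $\C P^2$''; I would verify it from $\sigma(\di(g_1,g_2,g_3)) = \di(g_2^{-1},g_1^{-1},g_3^{-1})$. The second, and the genuinely structural point, is that a \emph{single} gauge parameter $\delta$ suffices to synchronize all three entries: this works only because the defining shape of $\mathfrak{g}_5^{\mathbb C}$ ties the two off-diagonal $a$-entries together, leaving just one phase relation (between the $a$- and $b$-slots) to be fixed by one linear equation in $\delta$. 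I would also flag the nondegeneracy caveat that the coordinate change requires $c \neq 0$, i.e. $A \neq 0$, the case $A = 0$ being the trivial vacuum. Assembling these, a single gauge conjugation followed by a single holomorphic rescaling carries an arbitrary vacuum to the Clifford potential.
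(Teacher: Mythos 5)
Your proposal is correct and follows essentially the same route as the paper: derive the form of $A\in\mathfrak{g}_{-1}=\mathfrak{g}_5^{\mathbb C}$, reduce $[A,\tau(A)]=0$ to $|a|^2=|b|^2$, synchronize the phases by conjugating with $\di(e^{i\delta},e^{-i\delta},1)$ for $\delta=\tfrac{\beta-\theta}{3}$, and absorb the remaining scalar by the coordinate change $w=cz$. The extra verifications you flag (that the diagonal element lies in $K=G^{\sigma}$ and that $A\neq 0$ is needed for the rescaling) are sound and only make explicit what the paper leaves implicit.
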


\section{Minimal Lagrangian immersions in $\C P^2$ with  
symmetries}
\label{Sec:symmetries}

\subsection{General background}
For all classes of surfaces, the surfaces admitting some symmetries are
of particular interest and beauty.
In this section we discuss symmetries of contractible minimal Lagrangian surfaces.

Let $M=\D$ be a contractible Riemann surface. (Note that we can exclude the discussion of $S^2$.) 
While a basic definition of a symmetry $\mathcal{R}$  for  the image  $f(\D)$ of a minimal Lagrangian surface $f: \D \rightarrow \C P^2$ may only mean 
$\mathcal{R}f(\D) = f(\D)$,
  it is very helpful, if for the given minimal Lagrangian immersion there even also exists an automorphism $\gamma$ of $\D$ satisfying
\begin{equation}
f(\gamma .z) = \mathcal{R} f(z) \hspace{2mm} \mbox{for all} \hspace{2mm} z \in \D.
\end{equation}

Note that most of our functions are functions of $z$ and $\bar{z}$. 
However, where no confusion can occur we frequently drop the argument involving $\bar{z}$.

The existence of  such an automorphism  $\gamma$ can usually be proven
 as in Theorem 2.7, \cite{DoHa;sym1},  if the induced metric is complete
(and  we assume anyway that $\D$ is simply-connected).
 
 Therefore, in this paper, for a minimal Lagrangian immersion $f: M\rightarrow \C P^2$,  a \emph{symmetry} will always be a pair $(\gamma, \mathcal{R}) \in
(\mathrm{Aut}(M), \mathrm{Iso}_0(\C P^2)),$ such that 
\begin{equation} \label{basic-symmetry}
f(\gamma\cdot z) = \mathcal{R}  f(z) \hspace{2mm} \mbox{for all} \hspace{2mm} z \in M
\end{equation}
 holds. Note that here we use the lower label $0$ to denote the connected component of the isometry group of $\C P^2$ and  that we actually have $ \mathrm{Iso}_0(\C P^2) = PSU (3)$. 
 
From here on we will always assume that $f$ is \emph{full}, i.e., that  if we have some  
 $\mathcal{R} \in   \mathrm{Iso}_0(\C P^2)$ such that
$\mathcal{R} f(p) = f(p)$ for all $p \in \D,$ then $\mathcal{R} = id$.

The following result is an easy consequence of the definitions.

\begin{lemma} \label{trafolift}
Assume $f: \D \rightarrow \C P^2$ is  a minimal Lagrangian immersion with a horizontal lift $\f$. 

\begin{enumerate}
\item If $f$ is full, then also the horizontal lift $\f$ of $f$ is full.

\item  If $(\gamma, \mathcal{R})$ is a symmetry of $f$ and $\mathcal{R} = [R]$
for some $R\in SU(3)$,  then 
$(\gamma,\delta_{\gamma,R} R)$ is a symmetry of $\f$
for some $\delta_{\gamma, R} \in S^1$ and $\delta_{\gamma,R} R$ is uniquely determined.
\end{enumerate}
\end{lemma}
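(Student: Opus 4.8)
The plan is to reduce both parts to the uniqueness of the horizontal lift up to a constant factor $\delta\in S^1$ (cf.\ Corollary \ref{uniquehori}). Throughout I interpret fullness of the lift $\f$ in the manner parallel to that of $f$: namely, $\f$ is full if the only $R\in SU(3)$ with $R\f(z)=\f(z)$ for all $z$ is $R=I$.

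For part (1) I would argue at the level of stabilizers. Suppose $R\in SU(3)$ fixes $\f$ pointwise, $R\f(z)=\f(z)$ for all $z$. Applying the Hopf projection $\varPi_H$ and using $\varPi_H(Rw)=[R]\,\varPi_H(w)$ gives $[R]f(z)=f(z)$ for all $z$, so $[R]\in PSU(3)$ fixes $f$ pointwise. Since $f$ is full, $[R]=\mathrm{id}$, i.e.\ $R$ lies in the center of $SU(3)$, so $R=cI$ with $c^3=1$. Then $R\f(z)=c\,\f(z)=\f(z)$ together with $\f(z)\neq 0$ forces $c=1$, whence $R=I$. Thus $\f$ is full.

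For part (2) the key idea is to exhibit two horizontal lifts of one and the same map and then invoke uniqueness. I would set $\tilde\f:=\f\circ\gamma$ and $\mathfrak g:=R\f$. Both project to the same map, since $\varPi_H\circ\tilde\f=f\circ\gamma$ while $\varPi_H\circ\mathfrak g=[R]\,f=\mathcal R f=f\circ\gamma$, the last equality being the symmetry \eqref{basic-symmetry} of $f$. It then remains to check that both are horizontal, i.e.\ satisfy \eqref{horizontal}. For $\mathfrak g=R\f$ this follows from the unitarity of $R$: since $(Ru)\cdot\overline{(Rv)}=u\cdot\bar v$, one gets $\mathfrak g_z\cdot\overline{\mathfrak g}=\f_z\cdot\overline{\f}=0$ and likewise for $\partial_{\bar z}$. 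For $\tilde\f=\f\circ\gamma$ this is where holomorphicity of $\gamma\in\mathrm{Aut}(\D)$ enters: with $w=\gamma(z)$ the chain rule gives $\tilde\f_z=\gamma'(z)\,\f_w$ and $\tilde\f_{\bar z}=\overline{\gamma'(z)}\,\f_{\bar w}$, so $\tilde\f_z\cdot\overline{\tilde\f}=\gamma'(z)(\f_w\cdot\overline{\f})=0$ and similarly in $\bar z$.

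Having verified that $\tilde\f$ and $\mathfrak g$ are two horizontal lifts of $f\circ\gamma$, the uniqueness statement for horizontal lifts yields a unique constant $\delta_{\gamma,R}\in S^1$ with $\f(\gamma\cdot z)=\delta_{\gamma,R}\,R\,\f(z)$ for all $z$; this is exactly the assertion that $(\gamma,\delta_{\gamma,R}R)$ is a symmetry of $\f$, and the uniqueness of $\delta_{\gamma,R}$ gives the uniqueness of $\delta_{\gamma,R}R$. The only genuine obstacle I anticipate is the horizontality of $\f\circ\gamma$: it hinges on $\gamma$ being a \emph{holomorphic} automorphism, so that the off-type derivatives drop out of the chain rule; once both candidate lifts are seen to be horizontal, everything else is formal and follows from the uniqueness up to a constant in $S^1$ recorded in Corollary \ref{uniquehori}.
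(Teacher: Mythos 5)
Your proposal is correct and follows essentially the same route as the paper: part (1) by projecting a pointwise stabilizer of $\f$ down to $\C P^2$ and using fullness of $f$ to force $R=cI$, $c^3=1$, $c=1$; part (2) by recognizing $\f\circ\gamma$ and $R\f$ as two horizontal lifts of $f\circ\gamma$ and invoking uniqueness up to a constant in $S^1$. The only difference is that you spell out the horizontality checks (unitarity of $R$ and holomorphicity of $\gamma$) that the paper leaves implicit, which is a welcome addition rather than a deviation.
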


\begin{proof}
\begin{enumerate}
\item  Assume that there exists some $H \in SU(3)$ satisfying $H \f(z) = \f(z)$ for all $z \in \D$. Then after projection down to $\C P^2$ we obtain $[H] f(z) = f(z)$ for all $z \in \D$. Since $f$ is full, we obtain $[H] = I$, i.e. $H = cI$ with $c^3 = 1$. But $c \f(z) = \f(z)$ for all $z \in \D$ implies $c=1$ and $H=id$, thus $\f$ is also full.

\item The equation $f(\gamma .z) = [R] f(z)$   implies that $\f(\gamma .z)$  and 
$ R \f(z)$ are horizontal lifts of $f(\gamma .z) $ and $ [R] f(z)$ respectively. Hence there exists
$\delta_{\gamma, R}$ with the properties as claimed.
\end{enumerate}
\end{proof}
Furthermore,  the following result can be shown almost verbatim as in Lemma 2.5 and Theorem 2.6 in \cite{DoHa;sym1}, respectively. 

\begin{lemma}
Let  $f: \D \rightarrow \C P^2$ be a minimal Lagrangian immersion and 
$(\gamma, \mathcal{R}) \in
\mathrm{Aut}(\D)\times  \mathrm{Iso}_0(\C P^2),$ a symmetry of $f$.
Then $\mathcal{R}$ is uniquely determined by $\gamma$.
\end{lemma}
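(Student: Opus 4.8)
The plan is to prove uniqueness by exploiting the fullness hypothesis on $f$, which is the standing assumption introduced just before the lemma. The entire argument reduces to showing that two isometries inducing the same symmetry via the same automorphism $\gamma$ must agree, and this follows because their "difference" fixes the image of $f$ pointwise.

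Concretely, I would begin by supposing that both $(\gamma, \mathcal{R}_1)$ and $(\gamma, \mathcal{R}_2)$ are symmetries of $f$, with $\mathcal{R}_1, \mathcal{R}_2 \in \mathrm{Iso}_0(\C P^2)$. Applying the defining symmetry relation \eqref{basic-symmetry} to each, I obtain
\[
\mathcal{R}_1 f(z) = f(\gamma \cdot z) = \mathcal{R}_2 f(z) \qquad \text{for all } z \in \D.
\]
Setting $\mathcal{S} := \mathcal{R}_2^{-1}\mathcal{R}_1 \in \mathrm{Iso}_0(\C P^2)$, this reads $\mathcal{S} f(z) = f(z)$ for all $z \in \D$. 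By the fullness of $f$, any isometry in $\mathrm{Iso}_0(\C P^2)$ fixing every point of the image must be the identity, so $\mathcal{S} = \mathrm{id}$ and hence $\mathcal{R}_1 = \mathcal{R}_2$. This establishes that $\mathcal{R}$ is determined by the data $(f, \gamma)$, and in particular by $\gamma$ once $f$ is fixed.

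I do not expect any genuine obstacle here: the statement is an immediate consequence of the definition of fullness, and the role of $\gamma$ is merely to pin down the common value $f(\gamma \cdot z)$ that both candidate isometries must reproduce. The only point deserving emphasis is that fullness is precisely the hypothesis that makes the conclusion true—without it, $\mathcal{S}$ could be a nontrivial isometry fixing $f(\D)$, and $\mathcal{R}$ would only be determined up to the stabilizer of the image. Since the paper reduces everything to full immersions, this subtlety does not arise, and the proof is complete in a single short computation.
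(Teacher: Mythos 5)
Your proof is correct and is exactly the argument the standing fullness assumption (introduced immediately before this lemma) is designed to enable: the quotient $\mathcal{R}_2^{-1}\mathcal{R}_1$ fixes $f(\D)$ pointwise, hence is the identity by fullness. The paper itself only cites Lemma 2.5 of \cite{DoHa;sym1} rather than writing this out, but the intended reasoning there is the same reduction, so your proposal matches the paper's approach.
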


\begin{theorem}\label{closed}
Let  $f: \D \rightarrow \C P^2$ be a minimal Lagrangian immersion and put
\begin{equation*}
\Gamma_{\D} = \left\{  \begin{array}{l}
\mbox{$\gamma \in \mathrm{Aut}(\D) 
$ such that there exists some 
$\mathcal{R} \in \mathrm{Iso}_0(\C P^2)$} \\
 \mbox{so that $(\gamma, \mathcal{R})$ is a symmetry for $f$}
 \end{array}
 \right\}.
\end{equation*}
Then $\Gamma_{\D} $ is a closed subgroup of  $ \mathrm{Aut}(\D).$
Moreover, the natural homomorphism $\Gamma_{\D}   \rightarrow  \mathrm{Iso}_0(\C P^2)$, 
$\gamma \mapsto \mathcal{R}$,  is real analytic.
\end{theorem}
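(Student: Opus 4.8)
The plan is to dispatch the algebraic assertions (subgroup, homomorphism) and the closedness by elementary arguments, and to reserve the real work for the real-analyticity claim, which I would establish by producing an explicit real-analytic formula for a lift of $\mathcal{R}$ built from the moving frame $\mathcal{F}$.

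\textbf{Subgroup and homomorphism structure.} First I would verify directly from \eqref{basic-symmetry} that $\Gamma_{\D}$ is a subgroup. The pair $(\mathrm{id},\mathrm{id})$ is a symmetry. If $(\gamma_1,\mathcal{R}_1)$ and $(\gamma_2,\mathcal{R}_2)$ are symmetries, then $f(\gamma_1\gamma_2\cdot z)=\mathcal{R}_1 f(\gamma_2\cdot z)=\mathcal{R}_1\mathcal{R}_2 f(z)$, so $\gamma_1\gamma_2\in\Gamma_{\D}$; applying $\gamma^{-1}$ to \eqref{basic-symmetry} gives $f(\gamma^{-1}\cdot z)=\mathcal{R}^{-1}f(z)$, so $\gamma^{-1}\in\Gamma_{\D}$. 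Since by the preceding Lemma $\mathcal{R}$ is uniquely determined by $\gamma$, the assignment $\Phi:\gamma\mapsto\mathcal{R}$ is well defined, and the two computations above show $\Phi(\gamma_1\gamma_2)=\Phi(\gamma_1)\Phi(\gamma_2)$, i.e. $\Phi$ is a group homomorphism.

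\textbf{Closedness.} I would take a sequence $\gamma_n\in\Gamma_{\D}$ with $\gamma_n\to\gamma$ in $\mathrm{Aut}(\D)$ and set $\mathcal{R}_n=\Phi(\gamma_n)$. Since $\mathrm{Iso}_0(\C P^2)=PSU(3)$ is compact, after passing to a subsequence $\mathcal{R}_n\to\mathcal{R}$ for some $\mathcal{R}\in PSU(3)$. For each fixed $z$, convergence in $\mathrm{Aut}(\D)$ gives $\gamma_n\cdot z\to\gamma\cdot z$, so continuity of $f$ and of the $PSU(3)$-action yields $f(\gamma\cdot z)=\lim_n f(\gamma_n\cdot z)=\lim_n\mathcal{R}_n f(z)=\mathcal{R}f(z)$. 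Hence $(\gamma,\mathcal{R})$ is a symmetry, $\gamma\in\Gamma_{\D}$, and $\Gamma_{\D}$ is closed.

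\textbf{Real analyticity.} This is the substantive step. By Lemma \ref{trafolift}(2) the symmetry lifts to $\f(\gamma\cdot z)=\tilde{R}\,\f(z)$ for a unique $\tilde{R}\in U(3)$ with $\mathcal{R}=[\tilde{R}]$. Differentiating this relation in $z$ and $\bar z$ and using that $\gamma$ is biholomorphic (so $\gamma'(z)\neq 0$), I would derive the transformation law for the Hermitian moving frame $\mathcal{F}=(e^{-u/2}\f_z,e^{-u/2}\f_{\bar z},\f)$, namely
\begin{equation*}
\mathcal{F}(\gamma(z))=\tilde{R}\,\mathcal{F}(z)\,\di\!\left(\tfrac{|\gamma'(z)|}{\gamma'(z)},\tfrac{\gamma'(z)}{|\gamma'(z)|},1\right),
\end{equation*}
together with $e^{u(\gamma(z))/2}=e^{u(z)/2}/|\gamma'(z)|$. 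Solving for the lift at the base point yields
\begin{equation*}
\tilde{R}=\mathcal{F}(\gamma(0))\,\di\!\left(\tfrac{\gamma'(0)}{|\gamma'(0)|},\tfrac{|\gamma'(0)|}{\gamma'(0)},1\right)\mathcal{F}(0)^{-1}.
\end{equation*}
Each factor is real analytic in $\gamma$: the evaluation $\gamma\mapsto\gamma(0)$ is real analytic on the finite-dimensional Lie group $\mathrm{Aut}(\D)$ and $\mathcal{F}$ is real analytic on $\D$ (being a solution of the linear system \eqref{eq:mathbbF}, whose coefficients are analytic since $u$ and $\psi$ are), so $\gamma\mapsto\mathcal{F}(\gamma(0))$ is real analytic; likewise $\gamma\mapsto\gamma'(0)$ is real analytic and nonvanishing, so the diagonal factor, whose entries are $\gamma'(0)/|\gamma'(0)|$ and its conjugate, is real analytic and single-valued without any branch ambiguity. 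Hence $\gamma\mapsto\tilde{R}$ is real analytic, and composing with the real-analytic projection $U(3)\to PSU(3)$ shows $\Phi=[\tilde{R}]$ is real analytic.

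\textbf{Main obstacle.} The subgroup and closedness arguments are routine; the crux is the real-analyticity step, specifically the correct derivation of the frame transformation law under $\gamma$ and the isolation of an explicit lift $\tilde{R}$ in terms of $\mathcal{F}(\gamma(0))$ and $\gamma'(0)$. The only subtlety to watch is that $\tilde{R}$ a priori depends on the choice of lift of $\mathcal{R}$, but the displayed formula produces one specific real-analytic lift, and real analyticity of $\Phi$ then follows after projecting to $PSU(3)$; this hinges on the real analyticity of $\mathcal{F}$, which in turn rests on the analytic regularity of the solutions $u$ and $\psi$ of \eqref{eq:mLsurfaces}--\eqref{eq:Codazzi}.
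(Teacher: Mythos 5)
Your proof is correct, and it follows essentially the route the paper intends: the paper itself gives no argument for Theorem \ref{closed} but defers to Theorem 2.6 of \cite{DoHa;sym1}, whose proof in the CMC setting is exactly your combination of a compactness argument in $PSU(3)$ for closedness and an explicit frame-based formula for a real-analytic lift of $\mathcal{R}$ (your expression $\tilde{R}=\mathcal{F}(\gamma(0))\,\di\bigl(\gamma'(0)/|\gamma'(0)|,\,|\gamma'(0)|/\gamma'(0),\,1\bigr)\mathcal{F}(0)^{-1}$ is the analogue of the formula used there, and is consistent with the paper's transformation law \eqref{transmathcal{F}} and the factor $k(\gamma,z)$). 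The only point worth making explicit is the real analyticity of $u$ (hence of $\mathcal{F}$), which follows from analytic elliptic regularity applied to \eqref{eq:mLsurfaces}, as you indicate.
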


The discussion of the behaviour of the horizontal lift under symmetries induced from $f$ is somewhat more complicated. First we observe that Lemma \ref{trafolift} and 
Theorem 2.6 of \cite{DoHa;sym1} yield

\begin{proposition} \label{homphi}
Let $f: \D \rightarrow \C P^2$ be a minimal Lagrangian immersion 
defined on some contractible Riemann surface
with a horizontal lift $\f$.
Then there exists a real analytic homomorphism 
$\phi : \Gamma_{\D} \rightarrow U(3)$  such that 
\begin{equation*}
\f (\gamma.z) = \phi(\gamma) \f(z)
\end{equation*}
for all $z \in \D $ and all $\gamma \in \Gamma_{\D}$.
More precisely, if $R \in SU(3)$ satisfies $[R] = \mathcal{R}$, then 
$\phi(\gamma)= c(\gamma) R$ with $c(\gamma) \in S^1.$ 
\end{proposition}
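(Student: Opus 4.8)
The plan is to read $\phi$ straight off the two preceding lemmas and then treat the homomorphism property and the real analyticity as two separate verifications. First I would fix $\gamma \in \Gamma_{\D}$. By the Lemma immediately preceding Theorem \ref{closed} the isometry $\mathcal{R}$ attached to $\gamma$ is uniquely determined by $\gamma$, so I may choose any $R \in SU(3)$ with $[R] = \mathcal{R}$. Lemma \ref{trafolift}(2) then hands me a constant $\delta_{\gamma,R} \in S^1$ with $\f(\gamma.z) = \delta_{\gamma,R} R\, \f(z)$ for all $z$, and asserts that the product $\delta_{\gamma,R} R$ is unambiguous: replacing $R$ by $cR$ with $c^3 = 1$ merely rescales $\delta_{\gamma,R}$ by $c^{-1}$. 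I would therefore set $\phi(\gamma) := \delta_{\gamma,R} R$. Since $|\delta_{\gamma,R}| = 1$ and $R \in SU(3)$, this lies in $U(3)$ and already has the asserted shape $\phi(\gamma) = c(\gamma) R$ with $c(\gamma) = \delta_{\gamma,R} \in S^1$.

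Next I would check that $\phi$ is a homomorphism. For $\gamma_1, \gamma_2 \in \Gamma_{\D}$ I evaluate $\f(\gamma_1\gamma_2.z)$ in two ways: directly as $\phi(\gamma_1\gamma_2)\f(z)$, and as $\f(\gamma_1.(\gamma_2.z)) = \phi(\gamma_1)\f(\gamma_2.z) = \phi(\gamma_1)\phi(\gamma_2)\f(z)$. Writing $H := (\phi(\gamma_1)\phi(\gamma_2))^{-1}\phi(\gamma_1\gamma_2) \in U(3)$, this gives $H\f(z) = \f(z)$ for all $z$. Factoring $H = \mu H_0$ with $\mu \in S^1$ and $H_0 \in SU(3)$ and projecting to $\C P^2$, fullness of $f$ forces $[H_0] = \mathrm{id}$, hence $H_0 = cI$ with $c^3 = 1$; then $H$ is scalar and $H\f = \f$ forces $H = I$. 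This is exactly the fullness argument already used in the proof of Lemma \ref{trafolift}(1), and it yields $\phi(\gamma_1\gamma_2) = \phi(\gamma_1)\phi(\gamma_2)$.

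Finally I would establish real analyticity. By Theorem \ref{closed} the map $\gamma \mapsto \mathcal{R}$ into $\mathrm{Iso}_0(\C P^2) = PSU(3)$ is real analytic, and since $SU(3) \to PSU(3)$ is a covering there is, near any point of $\Gamma_{\D}$, a real analytic local lift $\gamma \mapsto R(\gamma) \in SU(3)$. The action map $(\gamma, z) \mapsto \gamma.z$ is real analytic and $\f$ is real analytic, so $\gamma \mapsto \f(\gamma.0)$ is real analytic. Taking the Hermitian inner product of the relation $\f(\gamma.0) = c(\gamma) R(\gamma)\f(0)$ with the unit vector $R(\gamma)\f(0)$ isolates the scalar as $c(\gamma) = \f(\gamma.0) \cdot \overline{R(\gamma)\f(0)}$, a product of real analytic functions. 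Hence $\phi = c R$ is real analytic near every point, and therefore on all of $\Gamma_{\D}$.

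Most of this is bookkeeping once Lemma \ref{trafolift} and the uniqueness of $\mathcal{R}$ are in hand, so I expect the only genuine friction to be in the last step: one must correctly juggle the two independent ambiguities — the $S^1$ freedom in the horizontal lift $\f$ and the $\Z_3$ freedom in lifting $\mathcal{R}$ to $SU(3)$ — to produce a well-defined $\phi$ and a \emph{local} analytic lift $R(\gamma)$, and then extract $c(\gamma)$ analytically. The homomorphism identity itself is immediate from fullness, so the analytic dependence of the scalar factor is where I would spend the care.
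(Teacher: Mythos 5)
Your proof is correct and follows essentially the route the paper takes: the paper derives the proposition from Lemma \ref{trafolift}(2) together with Theorem 2.6 of \cite{DoHa;sym1}, and your three steps --- defining $\phi(\gamma)=\delta_{\gamma,R}R$ via the uniqueness in Lemma \ref{trafolift}(2), the fullness argument forcing $H=I$ for the homomorphism property, and the local lift through the covering $SU(3)\to PSU(3)$ combined with $c(\gamma)=\f(\gamma.0)\cdot\overline{R(\gamma)\f(0)}$ for real analyticity --- simply make explicit what that citation delegates. I see no gaps; in particular you correctly use the standing fullness assumption and the unit-vector normalization of $R(\gamma)\f(0)$ to isolate the scalar analytically.
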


Next we consider the transformation behaviour of the frame $\mathcal{F}(\f)$ under symmetries.
We recall the definition of the frame (see \cite{MM}):
\begin{equation} 
\mathcal{F}(\f) = (  \sqrt{a}^{-1}e^{-u/2} \xi,   \sqrt{b}^{-1}e^{-u/2} \eta, \f),
\end{equation}
where $\xi=\f_z-(\f_{z}\cdot \bar{\f})\f$,  $\eta=\f_{\bar z}-(\f_{\bar z}\cdot \bar{\f})\f$,  
$a = e^{-u} \xi \cdot \bar{\xi}$, $b = e^{-u} \eta \cdot \bar{\eta}$ and  in our case $\xi = \f_z$, $\eta = \f_{\bar{z}}$, $a =b =1$,  and  $g = 2 e^{u}dzd\bar{z}$ is the induced metric.
Moreover, we assume that we have chosen the lift $\f$ such that $\det \mathcal{F}(\f) = -1 $
holds.

Let now $(\gamma,R)$ be a symmetry of $\f$.
Then a straightforward computation yields
\begin{equation} \label{transmathcal{F}}
\mathcal{F}(\f \circ \gamma)(z) = c R \mathcal{F}(\f)(z)k(\gamma, z), 
\end{equation}
where  $k(\gamma, z) \in K=U(1)$ and $c \in S^1$  as above.

More precisely we have
\begin{equation}
k(\gamma, z ,\bar z) = \di (|\gamma'| / \gamma', |\gamma'| / \bar{\gamma}',1),
\hspace{2mm} \mbox{where} \hspace{2mm}  \gamma^{\prime}=\gamma_z.
\end{equation}

In this paper the transformation behaviour of the extended frame 
$\F$ is of great importance. For this we recall that before defining $\F$ we have 
normalized $\mathcal{F}(\f)$ so as to have determinant $-1$ by multiplying by some factor in $S^1$.
Note, these factors may be different for $\mathcal{F}(\f \circ \gamma)(z)$ and
 $ \mathcal{F}(\f)(z) $  in \eqref{transmathcal{F}}.  Properly normalizing the frames we obtain
 \begin{proposition}
 Let $(\gamma,R)$ be a symmetry of $\f$. Then we can assume w.l.g.
 \begin{equation} \label{transmathcal{F2}}
\mathcal{F}(\f \circ \gamma)(z) = \hat{c} R \mathcal{F}(\f)(z)k(\gamma, z), 
\end{equation}
where $k(\gamma, z) \in K=U(1)$ as above,
but also such that $ \hat{c} \in S^1 $ satisfies $\hat{c}^3 = 1$  and we have 
$\det \mathcal{F}(\f) = -1$ and 
$\det \mathcal{F}(\f \circ \gamma) = -1.$
\end{proposition}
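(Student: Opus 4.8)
The plan is to exploit the multiplicativity of the determinant in the already-established relation \eqref{transmathcal{F}}, together with the freedom in the choice of horizontal lift. First I would record two elementary determinant computations. Since $R \in SU(3)$ we have $\det R = 1$, and since $k(\gamma, z, \bar z) = \di(|\gamma'|/\gamma', |\gamma'|/\bar{\gamma}', 1)$, a direct computation gives $\det k(\gamma, z) = \frac{|\gamma'|}{\gamma'}\cdot\frac{|\gamma'|}{\bar{\gamma}'} = \frac{|\gamma'|^2}{|\gamma'|^2} = 1$. Taking determinants in \eqref{transmathcal{F}} then yields
$$\det \mathcal{F}(\f \circ \gamma)(z) = c^3 \det \mathcal{F}(\f)(z),$$
and both sides are constant in $z$ by the same argument already used to show that $\det \mathcal{F}$ can be normalized to $-1$.

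Next I would use that a horizontal lift is unique only up to a constant factor in $S^1$. Replacing the lift $\f$ by $\mu \f$ multiplies the whole frame $\mathcal{F}(\f)$ by $\mu$, hence its determinant by $\mu^3$; moreover this replacement leaves the factor $c$ in \eqref{transmathcal{F}} unchanged, since $\mu$ scales both sides equally and cancels. I would first choose $\mu$ so that $\det \mathcal{F}(\f) = -1$; after this step \eqref{transmathcal{F}} still holds with the same $c$ and with $\det \mathcal{F}(\f \circ \gamma) = -c^3$. Then, regarding $\mathcal{F}(\f \circ \gamma)$ as the frame of the immersion $f \circ \gamma = \mathcal{R} f$ — whose horizontal lift carries its own independent $S^1$ ambiguity — I would rescale it separately by a factor $t \in S^1$. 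This replaces $c$ by $\hat{c} := tc$ and the determinant by $t^3(-c^3)$; choosing $t$ with $t^3 c^3 = 1$ forces $\det \mathcal{F}(\f \circ \gamma) = -1$ while producing $\hat{c}^3 = 1$, which is exactly the asserted relation \eqref{transmathcal{F2}}.

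The computation itself is routine; the only point requiring care is the legitimacy of performing the two rescalings independently, and the interpretation of the residual factor. The essential observation is that one cannot in general arrange $\hat{c} = 1$, but only $\hat{c}^3 = 1$: this reflects precisely the cube-root-of-unity ambiguity of the normalized frame noted after the $\det \mathcal{F} = -1$ normalization (namely that the frame is determined only up to a factor $\delta$ with $\delta^3 = 1$). I would therefore stress that the two conditions $\det \mathcal{F}(\f) = -1$ and $\det \mathcal{F}(\f \circ \gamma) = -1$ are imposed on the two lifts separately, which is permissible exactly because each horizontal lift has its own independent $S^1$ freedom.
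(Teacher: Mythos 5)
Your proposal is correct and follows essentially the same route as the paper: normalize the two frames independently (using the separate $S^1$ freedom of each horizontal lift) so that both determinants equal $-1$, and then the identities $\det R = 1$ and $\det k(\gamma,z)=1$ force $\hat{c}^3=1$ upon taking determinants. You merely make explicit the determinant computations and the bookkeeping of the two independent rescalings that the paper's one-line proof leaves implicit.
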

\begin{proof}
After multiplying the frames with the corresponding factors, equation \eqref{transmathcal{F}}
changes as stated. But by taking determinants the last claim now follows.
\end{proof}

\subsection{Symmetries of Lagrangian surfaces defined on  contractible Riemann surfaces in the loop group formalism}

Inserting the loop parameter, see subsection \ref{subsec:frames} above,  produces a family of frames, called temporarily $\mathcal{F}_{\nu},$ with initial condition 
$\mathcal{F}_{\nu} (z, \bar z) = \mathcal{F}(z, \bar z),$ and  extends  equation 
(\ref{transmathcal{F2}}) above to
\begin{equation}\label{Fgammaz}
\mathcal{F}_\nu (\gamma .z) =\widetilde{ \chi }(\nu) \mathcal{F}_\nu(z) k (\gamma, z)
\end{equation}
 with $\widetilde{\chi}(\nu) $ unitary and 
 $\widetilde{\chi}(\nu = -i) = \hat{c} R.$
 By an argument given above \eqref{F} one can even assume without loss of generality 
 $\widetilde{\chi}(\nu) \in SU(3)$, $\det \mathcal{F}_\nu (\gamma .z)  
 = -1$, $ \det \mathcal{F}_\nu (z) = -1$ and $\hat{c}^3 = 1$.

It follows easily from the definition \eqref{F} of $\F (z, \bar z, \lambda)$ that $\F$ satisfies the equation
 \begin{equation} \label{trafoF}
 \F (\gamma.z, \overline{\gamma.z}, \lambda) = \chi(\lambda) \F(z, \bar z, \lambda) k(\gamma,z, \bar z),
 \end{equation}
 with $\chi(\lambda) = \widetilde{\chi}(-i\lambda^{-3})$ and $\chi(1) = \hat{c}R$.

Actually $\F (z, \bar z,\lambda) $ induces a family
 of minimal Lagrangian immersions  $ f_{\lambda}$, by projecting the last column 
 $\f_\lambda$ of $\F(z, \bar z, \lambda) $ to $\C P^2$.
 One observes that then $\f_{\lambda}$ is a horizontal lift for  $f_{\lambda}$, and 
 $\F(z, \bar{z},\lambda)   =  \F(\f_\lambda)(z, \bar{z}, \lambda)$ holds. We point out that the matrices $\F (z, \bar z,\lambda)$ are in $\SU$.
 
With this notation  we then obtain
\begin{equation} \label{basic-lambda- symmetry}
f_{\lambda} (\gamma . z) = [\chi(\lambda)] f_\lambda(z) \hspace{2mm} \mbox{for all} \hspace{2mm} z \in \D.
\end{equation}

\section{Symmetries $(\gamma, \mathcal{R})$ with $\gamma$ or $\mathcal{R}$ of finite order}
\label{sec:finiteorder}
\subsection{Symmetries $(\gamma,R)$ of minimal Lagrangian immersions where $\gamma$ has a fixed point}

In this subsection we consider minimal Lagrangian  immersions $ f: \D \rightarrow \C P^2$
defined on a contractible Riemann surface with some symmetry
$(\gamma, \mathcal{R})$ of $f$, 
where $\gamma$ has a fixed point in $\D$. We also assume as before, that each such immersion is full.
We have

\begin{theorem}\label{thm-sym-rot}
Let  $f: \D \rightarrow \C P^2$ be a minimal  Lagrangian immersion with horizontal lift $\f$
and  $(\gamma,\mathcal{R})$ a symmetry of $f$.
Assume that $\gamma$ has a fixed point $z_0 \in \D$. 
Then there exists an extended frame $\F(\f)$, normalized to $\F(\f)(z_0,\lambda) = I$, of $f$ 
such that its Birkhoff splitting  $\F = \F_- W_+$ satisfies
\begin{equation}\label{rot-F_-}
\F_-(\gamma\cdot z, \lambda) = T \F_-(z,\lambda)T^{-1},
\end{equation}
 where $T \in K$.
Moreover, the Maurer-Cartan form $\eta$ of $\F_-$, i.e. the normalized potential  of $f$, satisfies
\begin{equation}\label{rot-eta}
\eta(\gamma\cdot z, \lambda) = T \eta(z,\lambda)T^{-1}.
\end{equation}
Conversely, if we start from some normalized potential $\eta$ satisfying
(\ref{rot-eta}) for some symmetry $(\gamma,\mathcal{R})$ with fixed point $z_0 \in \D$ of $\gamma$,
and if $\eta$ is finite at $z_0$, 
then the solution to the ODE $dC = C \eta$, $C(z_0,\lambda) = I$ satisfies (\ref{rot-F_-}). 
From this we obtain
\begin{equation}
f(\gamma\cdot z,\lambda) = T f(z,\lambda).
\end{equation}
\end{theorem}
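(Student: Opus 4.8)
The plan is to exploit the transformation behaviour \eqref{trafoF} of the extended frame together with the uniqueness of the Birkhoff splitting. First I would use the hypothesis that $\gamma$ has a fixed point $z_0$ and normalize the extended frame so that $\F(z_0,\lambda) = I$; this is possible by the freedom recorded in Proposition \ref{prop:frame}. Evaluating \eqref{trafoF} at $z = z_0$ and using $\gamma\cdot z_0 = z_0$ gives $I = \chi(\lambda)\,I\,k(\gamma,z_0)$, so that $\chi(\lambda) = k(\gamma,z_0)^{-1} =: T^{-1}$ lies in $K = U(1)$ (up to the finite factor $\hat c$, which I would absorb or track carefully). The point is that the left-multiplier $\chi(\lambda)$ is forced to be a \emph{constant loop} in $K$, independently of $\lambda$, precisely because the frame is pinned down at the fixed point. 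Substituting this back, \eqref{trafoF} becomes
\begin{equation*}
\F(\gamma\cdot z,\lambda) = T^{-1}\F(z,\lambda)\,k(\gamma,z),
\end{equation*}
with $T \in K$.

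Next I would transport this relation through the Birkhoff decomposition $\F = \F_- W_+$. The key observation is that conjugation and left/right multiplication by elements of $K \subset \Lambda SU(3)_\sigma$ preserve the factor groups $\Lambda^-_*$ and $\Lambda^+$: since $T \in K$ and $k(\gamma,z)\in K$ are $\lambda$-independent (constant-loop) unitary elements with $T(\infty) = I$-type normalization compatible with $\Lambda^-_*$, the expression $T^{-1}\F_-(z,\lambda)T \cdot T^{-1}W_+(z,\lambda)k(\gamma,z)$ exhibits $\F(\gamma\cdot z,\lambda)$ as a product of an element of $\Lambda^-_* SL(3,\C)_\sigma$ times an element of $\Lambda^+ SL(3,\C)_\sigma$. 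By the uniqueness clause in the Birkhoff Decomposition theorem, these two factors must coincide with $\F_-(\gamma\cdot z,\lambda)$ and $W_+(\gamma\cdot z,\lambda)$ respectively, which yields exactly \eqref{rot-F_-}. Differentiating \eqref{rot-F_-} and forming $\F_-^{-1}d\F_-$ then gives \eqref{rot-eta} immediately, since the constant conjugation $T(\cdot)T^{-1}$ commutes with $d$.

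For the converse, I would start from a normalized potential $\eta$ finite at $z_0$ and satisfying \eqref{rot-eta}, solve $dC = C\eta$ with $C(z_0,\lambda) = I$, and check that the conjugated solution $T\,C(z,\lambda)\,T^{-1}$ satisfies the same ODE in the variable $\gamma\cdot z$ with the same initial condition at $z_0$; by uniqueness of solutions to linear ODEs this forces \eqref{rot-F_-}. Performing the Iwasawa decomposition and projecting the last column then produces $f(\gamma\cdot z,\lambda) = Tf(z,\lambda)$ after checking that $T$ descends correctly to an isometry of $\C P^2$.

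The main obstacle I anticipate is the bookkeeping around the scalar ambiguities: the factor $\hat c$ with $\hat c^3 = 1$ from Proposition, the $S^1$-ambiguity $\delta_{\gamma,R}$ of the horizontal lift in Lemma \ref{trafolift}, and the passage between $SU(3)$ and $PSU(3) = \mathrm{Iso}_0(\C P^2)$. One must verify that the element playing the role of $T$ genuinely lies in $K$ (rather than merely in $K$ times a cube root of unity) and is $\lambda$-independent; the cleanest route is to insist on the normalization $\F(z_0,\lambda) = I$ throughout and to argue that the evaluation at the fixed point pins $\chi(\lambda)$ to a single constant unitary loop. The preservation of the Birkhoff factor groups under the specific $K$-conjugation is the technical heart, and I would make it rigorous by noting that $T \in K$ has constant (i.e. $\lambda^0$) Fourier type and hence conjugation respects the grading that defines $\Lambda^-_*$ and $\Lambda^+$.
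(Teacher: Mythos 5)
Your proposal follows essentially the same route as the paper's proof: evaluating \eqref{trafoF} at the fixed point $z_0$ with the normalization $\F(z_0,\lambda)=I$ to force $\chi(\lambda)=k(\gamma,z_0)^{-1}$ to be a constant element of $K$, then inserting $TT^{-1}$ and invoking uniqueness of the Birkhoff splitting (noting that constant $K$-conjugation preserves $\Lambda^-_*$ and $\Lambda^+$), and for the converse using uniqueness of the linear ODE followed by uniqueness of the Iwasawa decomposition. Apart from your $T$ being the inverse of the paper's $T$ (immaterial, since the claim is existential), the argument is correct and matches the paper's.
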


\begin{proof}
Choose a base point $z_0$  and assume $\F (z_0,\lambda)=I$.
Evaluating \eqref{trafoF}
at $z = z_0$ we obtain 
$I  = \chi (\gamma, \lambda) k(\gamma, z_0).$
This shows
$$\chi (\gamma,\lambda) = k(\gamma, z_0)^{-1}$$
and implies that $\chi$ is independent of $z$ and
$\lambda$, denoted by $T$.
Performing  the unique  Birkhoff decomposition $\F = \F_- W_+$ with $\F_-$ of the form $\F_-(z,\lambda) = I + \mathcal{O}(\lambda^{-1})$,  we obtain  $\F_- (\gamma \cdot z,\lambda) W_+(\gamma \cdot z,\lambda) = \F(\gamma \cdot z,\lambda)  = T \F(z,\lambda) k(\gamma,z)
 = T \F_-(z,\lambda)T^{-1} \cdot T W_+(z,\lambda) k(\gamma,z)$ from which, together with 
$\F_-(z,\lambda) = I + \mathcal{O} (\lambda^{-1}),$ we infer
\begin{equation*}
\F_-(\gamma\cdot z, \lambda) = T \F_-(z,\lambda)T^{-1}, \ \ W_+(\gamma\cdot z,\lambda) = T W_+(z,\lambda) k(\gamma,z)
\end{equation*}
and the first part of the theorem follows.

To prove the converse we split $C = \F V_+$ such that the leading term $V_0$ of $V_+$ has only positive diagonal entries. Then the uniqueness  
of the Iwasawa splitting shows that $C \circ \gamma = T \F T^{-1} \cdot T V_+ T^{-1}$ implies $T \F (z,\lambda) T^{-1} = \F(\gamma \cdot z,\lambda) $ and $T V_+  (z,\lambda) T^{-1} = V_+(\gamma \cdot z,\lambda) $ and the claim follows.
\end{proof}

\begin{corollary}
Let  $f: \D \rightarrow \C P^2$ be a (full) minimal  Lagrangian immersion
and  $(\gamma,R)$ a symmetry of $f$. Assume that $\gamma$ has finite order $m$. Then $\gamma$ has a fixed point in $\D$, and the theorem applies. Moreover, if $f$ is full, then $R^m = I$ holds.
\end{corollary}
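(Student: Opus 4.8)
The plan is to verify the three assertions in order: that $\gamma$ has a fixed point, that Theorem~\ref{thm-sym-rot} then applies, and finally that $R^m = I$. For the fixed point, recall that $\D$ is biholomorphic to $\C$, to the upper half-plane $\Up$, or to the unit disk. On $\C$ an automorphism has the form $z \mapsto az+b$, and $\gamma^m=\mathrm{id}$ forces $a^m=1$; if $a\neq 1$ then $z_0 = b/(1-a)$ is fixed, while if $a=1$ finite order forces $b=0$. On $\Up$ or the disk I would use the complete, simply connected, negatively curved hyperbolic metric, which is invariant under every holomorphic automorphism: the finite cyclic group $\langle\gamma\rangle$ then acts by isometries, and the Cartan fixed point theorem (barycenter of an orbit) supplies a fixed point $z_0\in\D$; equivalently, a finite-order element of $\mathrm{PSL}(2,\R)$ is elliptic. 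In every case $\gamma$ fixes some $z_0\in\D$, so the hypotheses of Theorem~\ref{thm-sym-rot} are met.

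Applying the theorem at this fixed point yields $T = k(\gamma,z_0)^{-1}\in K$ with $\chi(\lambda)\equiv T$ and $f(\gamma\cdot z,\lambda)=[T]f(z,\lambda)$. I would then evaluate $T$ explicitly. Since $\gamma(z_0)=z_0$ and $\gamma^m=\mathrm{id}$, the chain rule gives $\gamma'(z_0)^m=(\gamma^m)'(z_0)=1$; writing $\zeta:=\gamma'(z_0)$ we obtain $\zeta^m=1$, and in particular $|\zeta|=1$. Substituting into $k(\gamma,z)=\di(|\gamma'|/\gamma',\,|\gamma'|/\bar\gamma',\,1)$ at $z_0$ gives $k(\gamma,z_0)=\di(\bar\zeta,\zeta,1)$, so that
\begin{equation*}
T=\di(\zeta,\bar\zeta,1),\qquad T^m=\di(\zeta^m,\bar\zeta^{\,m},1)=I.
\end{equation*}

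Finally, since $\chi(\lambda)\equiv T$, at $\lambda=1$ the isometry of the symmetry is $\mathcal R=[T]$, and $T\in\SU$ is precisely the lift of $\mathcal R$ delivered by the frame; taking $R=T$ gives $R^m=T^m=I$. Here fullness is what guarantees that $\mathcal R$ is the unique isometry attached to $\gamma$ (the uniqueness lemma preceding Theorem~\ref{closed}), so that the matrix $T$ genuinely represents the $\mathcal R$ of the hypothesis. I expect the one real subtlety---and the step I would treat most carefully---to be the central $\Z/3$ ambiguity in lifting $\mathcal R\in PSU(3)$ to $\SU$: the soft argument (iterate $f(\gamma\cdot z,\lambda)=[T]f(z,\lambda)$, use $\gamma^m=\mathrm{id}$, and invoke fullness to get $[T]^m=\mathrm{id}$) only shows that $R^m$ lies in the center $\{cI:c^3=1\}$, whereas the explicit diagonal form of $T$ together with $\zeta^m=1$ is what selects the representative $c=1$ and yields $R^m=I$ on the nose.
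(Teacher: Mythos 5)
Your proof is correct. The paper states this corollary without proof, so the comparison is with the argument the authors evidently have in mind: for the fixed point, the standard fact that a finite-order automorphism of $\C$, $\Up$ or the unit disk is elliptic (exactly as you argue), and for the last claim the one-line ``soft'' argument $f(z)=f(\gamma^m\cdot z)=\mathcal{R}^m f(z)$ plus fullness, which proves $\mathcal{R}^m=\mathrm{id}$ in $PSU(3)$ --- and indeed, in the following subsection the authors quote the corollary in exactly that form, ``$\mathcal{R}^m=I$''. What you do differently is to compute the matrix produced by Theorem~\ref{thm-sym-rot} explicitly: $T=k(\gamma,z_0)^{-1}=\di(\zeta,\bar\zeta,1)$ with $\zeta=\gamma'(z_0)$ and $\zeta^m=1$ by the chain rule, hence $T^m=I$ in $\SU$ on the nose. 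This buys a genuinely stronger conclusion, since the soft argument only pins down $R^m$ modulo the center $\{cI: c^3=1\}$ for an $\SU$-lift $R$ of $\mathcal{R}$, and you are right to flag that the literal statement ``$R^m=I$'' is only unambiguous for the frame-canonical lift $T$ (an arbitrary lift $R=cT$ gives $R^m=c^mI$, which need not be $I$ unless $3\mid m$). Your reading --- that the corollary's $R$ is the canonical representative supplied by the normalized frame --- is the one that makes the stated identity exact, and your computation is what justifies it.
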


\begin{example}[Minimal Lagrangian surfaces with finite order symmetry]
Let $\gamma \cdot z=e^{\frac{2\pi i}{m}}z$ and $T=\mathrm{diag}(e^{\frac{2\pi i}{m}}, e^{-\frac{2\pi i}{m}}, 1)$.
Take the normalized potential $$\eta=\lambda^{-1}\begin{pmatrix}0&0&a(z^m)\\ z^{-3}b(z^{m})&0&0\\0&a(z^m)&0\end{pmatrix}dz,$$
 where  $a(z)$, $b(z)$ are any holomorphic functions, $b$ vanishes at $z=0$ and $m\geq 3$ is an integer.
Then $\eta$ satisfies $\gamma^* \eta=T\eta T^{-1}$. By the above theorem, $\eta$ produces a  minimal Lagrangian immersion
which possesses an $m$-fold symmetry with the fixed point $z=0$. 
This describes a new class of minimal Lagrangian immersions.
\end{example}

\begin{remark}
If one starts from some minimal Lagrangian surface with finite order symmetry, then the normalized potential has the form as above, but with meromorphic functions $a$ and $b$.
It would be interesting to know which of the potentials as above with $a$ and $b$ 
meromorphic will yield smooth immersions.
The analogous problem for CMC surfaces in $\R^3$ was solved in  \cite{DoHaMero}.
\end{remark}

Another possibility is, where $T$ has infinite order. In this case the closure of the set  $\{T^m; m \in \mathbb{Z} \}$ is a continuous group and also the closure of the set 
 $\{\gamma^m; m \in \mathbb{Z} \}$ is a continuous group. We will discuss this case in Section \ref{sec:large symmetries} below.

\subsection{Symmetries $(\gamma,\mathcal{R})$ of minimal Lagrangian immersions, where $\mathcal{R}$ has finite order, but  $\gamma$ has no fixed point}  

In the last subsection we have discussed the case, where $\gamma^m = id$ for some symmetry $(\gamma,\mathcal{R})$ of some minimal Lagrangian immersion $f$.
We have seen that in this case $\gamma$ has a fixed point, say $z_0\in \D$  and that $\mathcal{R}^m =I$ holds if $f$ is full.

In this subsection we consider the case, where $\mathcal{R}^m = I$ holds, but where $\gamma$ does not have any fixed point. In preparation for this we prove

\begin{proposition}
Let  $f: \D \rightarrow \C P^2$ be a minimal  Lagrangian immersion and put
\begin{equation}
\ker (f) = \{ \kappa \in \mathrm{Aut}(\D); f(\kappa\cdot z) = f(z) \hspace{2mm} \mbox{for all} \hspace{2mm} z\in \D\}.
\end{equation}
Then $\ker(f)$ is a discrete subgroup of $\mathrm{Aut}(\D)$ and acts freely and discontinuously 
on $\D$.
\end{proposition}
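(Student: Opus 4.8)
The plan is to establish the assertions in turn: the subgroup property, closedness, freeness from the immersion condition, discreteness from a Killing-field argument, and finally proper discontinuity via the explicit structure of $\mathrm{Aut}(\D)$. First I would note that $\ker(f)$ is a subgroup, since $f(\kappa\mu\cdot z)=f(\mu\cdot z)=f(z)$ and $f(\kappa^{-1}\cdot z)=f(z)$ for $\kappa,\mu\in\ker(f)$. It is closed in $\mathrm{Aut}(\D)$: if $\kappa_n\to\kappa$ locally uniformly with $f\circ\kappa_n=f$, then $f\circ\kappa=f$ by continuity. Equivalently, fullness identifies $\ker(f)$ with the kernel of the real-analytic homomorphism $\gamma\mapsto\mathcal{R}$ of Theorem \ref{closed}, which is automatically closed.

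For freeness, suppose $\kappa\in\ker(f)$ fixes some $z_0\in\D$. Differentiating $f\circ\kappa=f$ and evaluating at $z_0$ gives $df_{z_0}\circ d\kappa_{z_0}=df_{z_0}$; since $f$ is an immersion, $df_{z_0}$ is injective, so $d\kappa_{z_0}=\mathrm{id}$. A holomorphic automorphism of $\D$ fixing $z_0$ with derivative $1$ there is the identity (directly for $\D=\C$, and by the Schwarz lemma for the disk), whence $\kappa=\mathrm{id}$. Thus $\ker(f)$ acts freely.

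Discreteness I would obtain by excluding nontrivial one-parameter subgroups. As a closed subgroup of the Lie group $\mathrm{Aut}(\D)$, $\ker(f)$ is an embedded Lie subgroup; any $X$ in its Lie algebra yields $\kappa_t=\exp(tX)\subseteq\ker(f)$, and differentiating $f(\kappa_t\cdot z)=f(z)$ at $t=0$ shows that the fundamental vector field $\tilde X$ satisfies $df(\tilde X)=0$, hence $\tilde X\equiv 0$ by immersivity, and $X=0$ by faithfulness of the $\mathrm{Aut}(\D)$-action. Therefore the identity component of $\ker(f)$ is trivial and $\ker(f)$ is discrete.

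For proper discontinuity I would split according to uniformization. If $\D=\C$, then $\mathrm{Aut}(\C)=\{z\mapsto az+b\}$, and every element with $a\neq 1$ has a fixed point; by freeness $\ker(f)$ consists only of translations, hence is a discrete subgroup of $(\C,+)$, i.e.\ a lattice, which acts properly discontinuously. If $\D$ is the disk or upper half-plane, then $\mathrm{Aut}(\D)\cong PSL(2,\R)$ and a discrete subgroup acts properly discontinuously by the standard Fuchsian-group theorem. The main obstacle is precisely this last step: without assuming completeness of the induced metric one cannot simply appeal to properness of the full isometry-group action, so the argument must use the concrete description of $\mathrm{Aut}(\D)$ in the two cases, the $\C$-case relying essentially on freeness to reduce to a lattice of translations.
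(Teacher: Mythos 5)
Your proof is correct. Note, though, that the paper does not actually write out an argument here: its entire ``proof'' is the single sentence that the statement can be taken almost verbatim from the corresponding proposition for CMC immersions on p.~446 of Dorfmeister--Haak \cite{DoHa;sym1}. What you have produced is precisely the self-contained adaptation that citation asks the reader to perform, and each of your steps is the right one: freeness follows from injectivity of $df_{z_0}$ forcing $d\kappa_{z_0}=\mathrm{id}$ and then the Schwarz lemma (or the affine description of $\mathrm{Aut}(\C)$); discreteness follows because a closed subgroup of the Lie group $\mathrm{Aut}(\D)$ with a nontrivial one-parameter subgroup would produce a nonvanishing fundamental vector field annihilated by $df$, contradicting immersivity; and proper discontinuity is correctly reduced to the Fuchsian-group theorem for the disk/half-plane and, in the $\C$ case, to the observation that freeness forces $\ker(f)$ to be a discrete group of translations, i.e.\ a lattice. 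The only substantive difference from the source is that you argue intrinsically from the immersion property, whereas the cited CMC proof is being transplanted wholesale; your version has the advantage of making explicit where immersivity (rather than any curvature hypothesis) is what is actually used, and of isolating the one genuinely case-dependent step, namely proper discontinuity, which as you rightly note cannot be obtained from completeness of the induced metric since none is assumed.
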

\begin{proof}
The proof can be taken almost verbatim from the proposition on p.446 of \cite{DoHa;sym1}.
\end{proof}

From this we obtain
\begin{proposition}\label{prop:4.4}
Let  $f: \D \rightarrow \C P^2$ be a minimal  Lagrangian immersion and 
$(\gamma,\mathcal{R})$ a symmetry of $f$, where $\mathcal{R}$ satisfies $\mathcal{R}^m = id$, $\mathcal{R}^{m-1} \neq id$ for some positive integer $m$. 
Then we obtain
\begin{enumerate}
\item[(a)] $\gamma^m \in \ker(f).$

\item[(b)] $f$ descends to a minimal Lagrangian immersion $\hat{f}$ from the Riemann surface
 $\hat{M} = \D / \ker(f)$ to $\C P^2$.

\item[(c)] The symmetry $(\gamma,\mathcal{R})$ of $f$ descends to a symmetry $(\hat{\gamma}, \mathcal{R})$ of $\hat{f}$ and $\hat{\gamma}$ acts on $\hat{M}$ as an automorphism of finite order $m$.
\end{enumerate}
\end{proposition}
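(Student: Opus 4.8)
The plan is to take the three assertions in turn, with (a) immediate, (b) a quotient construction, and (c) resting on the fact that $\gamma$ normalizes $\ker(f)$. For (a) I would simply iterate the defining relation $f(\gamma\cdot z)=\mathcal{R}f(z)$ of the symmetry $m$ times to get $f(\gamma^m\cdot z)=\mathcal{R}^m f(z)=f(z)$ for all $z\in\D$, using $\mathcal{R}^m=id$. Since $\gamma^m\in\mathrm{Aut}(\D)$, this is exactly the statement $\gamma^m\in\ker(f)$.

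For (b) I would invoke the preceding proposition: $\ker(f)$ acts freely and properly discontinuously on $\D$, so the quotient $\hat M=\D/\ker(f)$ is a Riemann surface and the projection $p\colon\D\to\hat M$ is a holomorphic covering, in particular a local biholomorphism. By the definition of $\ker(f)$ the map $f$ is constant on the fibers of $p$, hence factors uniquely as $f=\hat f\circ p$ for some $\hat f\colon\hat M\to\C P^2$. Because $p$ is a local biholomorphism and being a minimal Lagrangian immersion is a local (conformal and metric) property, $\hat f$ inherits this property from $f$; thus $\hat f$ is a minimal Lagrangian immersion.

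The content of the proposition lies in (c). First I would show that $\gamma$ normalizes $\ker(f)$: for $\kappa\in\ker(f)$, using $f(\gamma^{-1}\cdot w)=\mathcal{R}^{-1}f(w)$ and $f(\kappa\cdot w)=f(w)$,
\[
f\bigl((\gamma\kappa\gamma^{-1})\cdot z\bigr)=\mathcal{R}\,f\bigl((\kappa\gamma^{-1})\cdot z\bigr)=\mathcal{R}\,f(\gamma^{-1}\cdot z)=\mathcal{R}\mathcal{R}^{-1}f(z)=f(z),
\]
so $\gamma\kappa\gamma^{-1}\in\ker(f)$. Consequently $\gamma$ descends to a well-defined automorphism $\hat\gamma$ of $\hat M$ with $\hat\gamma\circ p=p\circ\gamma$, and evaluating $\hat f$ on $\hat\gamma\cdot p(z)=p(\gamma\cdot z)$ gives $\hat f(\hat\gamma\cdot p(z))=f(\gamma\cdot z)=\mathcal{R}f(z)=\mathcal{R}\hat f(p(z))$, so $(\hat\gamma,\mathcal{R})$ is a symmetry of $\hat f$.

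It remains to compute the order of $\hat\gamma$, and this is where I expect the real care to be needed. For every $j\ge1$ one has the chain of equivalences: $\hat\gamma^j=id$ iff $\gamma^j\in\ker(f)$ iff $\mathcal{R}^j f(z)=f(z)$ for all $z$ iff $\mathcal{R}^j=id$, where the final step uses fullness of $f$ (so that $\mathcal{R}^j\in\mathrm{Iso}_0(\C P^2)$ fixing $f$ pointwise forces $\mathcal{R}^j=id$). Hence the order of $\hat\gamma$ equals the order of $\mathcal{R}$, namely $m$. The main obstacle is precisely this order bookkeeping: fullness of $f$ is indispensable here, since without it $\hat\gamma$ could a priori have order a proper divisor of $m$; the normalization of $\ker(f)$ by $\gamma$, although a short computation, is the other load-bearing ingredient, as it is what makes $\hat\gamma$ well defined in the first place.
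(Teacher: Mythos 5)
Your proof is correct and matches the paper's intent: the paper gives no written proof of this proposition (it is introduced only with ``From this we obtain'', referring to the preceding proposition that $\ker(f)$ is a discrete subgroup acting freely and discontinuously), and your argument is exactly the natural elaboration of that implicit derivation. In particular you correctly supply the two points the paper leaves unstated --- that $\gamma$ normalizes $\ker(f)$, so that $\hat{\gamma}$ is well defined on $\hat{M}$, and that fullness of $f$ (a standing assumption in the paper) is what pins the order of $\hat{\gamma}$ at exactly $m$.
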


\begin{corollary}
Under the assumptions of Proposition \ref{prop:4.4} above, if, in addition, $\gamma$ has a fixed point in $\D$, then $\gamma$ is of finite order $m$.
\end{corollary}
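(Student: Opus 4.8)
The plan is to upgrade the membership $\gamma^m \in \ker(f)$ supplied by part (a) of Proposition \ref{prop:4.4} to the genuine equality $\gamma^m = id$, and then to pin down that the order is \emph{exactly} $m$ by comparing $\gamma$ with the automorphism $\hat{\gamma}$ it induces on the quotient $\hat{M} = \D/\ker(f)$. The one substantive input is the freeness of the $\ker(f)$-action; everything else is formal group theory, so I expect no real computation.

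First I would record that, since $\gamma$ fixes $z_0 \in \D$, so does every power of $\gamma$; in particular $\gamma^m$ fixes $z_0$. By Proposition \ref{prop:4.4}(a) we have $\gamma^m \in \ker(f)$, and the proposition preceding Proposition \ref{prop:4.4} asserts that $\ker(f)$ acts \emph{freely} on $\D$, so the only element of $\ker(f)$ possessing a fixed point is the identity. Hence $\gamma^m = id$, which already shows that $\gamma$ has finite order dividing $m$. This step is where the hypothesis of a fixed point is used in an essential way, and it is the crux of the argument: without it one could only conclude $\gamma^m \in \ker(f)$.

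To see that the order is exactly $m$, I would invoke Proposition \ref{prop:4.4}(c), according to which the descended automorphism $\hat{\gamma}$ of $\hat{M}$ has order precisely $m$. Since the assignment $\gamma \mapsto \hat{\gamma}$ is a group homomorphism, the order of $\hat{\gamma}$ divides the order of $\gamma$, so $m$ divides the order of $\gamma$; combined with the divisibility in the previous paragraph this forces the order of $\gamma$ to equal $m$. (Alternatively, one may argue via fullness: iterating \eqref{basic-symmetry} gives $f(\gamma^d\cdot z) = \mathcal{R}^d f(z)$, so if $\gamma^d = id$ then $f = \mathcal{R}^d f$ on all of $\D$, whence $\mathcal{R}^d = id$ by fullness, and minimality of the period $m$ of $\mathcal{R}$ gives $m \mid d$.) The only point requiring care is therefore to ensure that $m$ is understood as the true order of $\hat{\gamma}$ (equivalently of $\mathcal{R}$), after which the two divisibilities close the argument immediately.
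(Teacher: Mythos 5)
Your core argument is exactly the paper's: $\gamma^m\in\ker(f)$ by part (a), $\gamma^m$ inherits the fixed point of $\gamma$, and freeness of the $\ker(f)$-action forces $\gamma^m=id$. The additional paragraph pinning down that the order is \emph{exactly} $m$ goes beyond what the paper proves (it stops at $\gamma^m=id$), and is a harmless refinement, though note that the stated hypotheses $\mathcal{R}^m=id$, $\mathcal{R}^{m-1}\neq id$ do not by themselves guarantee that $m$ is the exact order of $\mathcal{R}$, so the "minimality of the period" step in your parenthetical would need that reading of the hypothesis.
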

\begin{proof}
From part a) above we conclude $\gamma^m = id$, since $\gamma^m \in 
 \ker(f)$ has a fixed point and  $\ker(f)$ acts fixed point free.
\end{proof}

\subsubsection{\bf The case $\D = \C$ } 
In this case we can give a more precise description.
Let $\D = \C$  and  $f: \D \rightarrow \C P^2$ be a minimal  Lagrangian immersion.
Let $(\gamma,\mathcal{R})$ be a symmetry of $f$, where $\mathcal{R}$ satisfies $\mathcal{R}^m = id$, $\mathcal{R}^{m-1} \neq id$ for some positive integer $m$ and where $\gamma$ does not have a fixed point in $\C$. 

Since an automorphism of $\C$ has no fixed point if and only if it is a translation, we conclude that $\ker(f)$ is a discrete group of translations and therefore is either ${0}$, or
$\mathbb{Z} \omega$ with some nonzero $\omega \in \C$ or $\mathbb{Z} \omega_1  + \mathbb{Z} \omega_2$  with  $\omega_1$ and $\omega_2$ linearly independent over $\R$.  
Hence $\hat{M}$ of Proposition \ref{prop:4.4}is the complex plane  
or a cylinder $\mathcal{C} = \C / \mathbb{Z} \omega$ or a torus $\mathcal{T}= \C / \mathbb{Z} \omega_1  + \mathbb{Z} \omega_2$.

Moreover, $\gamma$ acts on $\hat{M}$ as an automorphism of order $m$.
Since we also have assumed that $\gamma$ does not have a fixed point in $\C$, we know 
$\gamma.z = z + \delta,$ for some nonzero $\delta \in \C$.

As a consequence, $\gamma^m . z  = z + m \delta$ is in $\ker(f)$, whence $\ker(f)$ is non-trivial and $\hat{M}$ is not simply-connected. 

Since $\gamma$ has no fixed point, only $\ker f=\Z \omega$ or $\ker f=\Z \omega_1+\Z \omega_2$ can occur. 
Thus we have

\begin{proposition}
Let $\D = \C$  and  $f: \D \rightarrow \C P^2$  a minimal  Lagrangian immersion.
Let $(\gamma,\mathcal{R})$ be a symmetry of $f$, where $\mathcal{R}$ satisfies $\mathcal{R}^m = id$,  $\mathcal{R}^{m-1} \neq id$ for some positive integer $m$ and where $\gamma$ does not have a fixed point in $\C$. 
Then $\gamma(z) = z+ \delta$ and only the following two cases occur:
\begin{enumerate}
\item[(i)] In the case when $\ker f=\Z \omega$:   $\delta=r\omega\neq 0$ with $r$ rational; 
 
 \item[(ii)] In the case when $\ker f=\Z \omega_1+\Z \omega_2$: $\delta=r_1\omega_1+r_2\omega_2\neq 0$ with $r_1, r_2$ rational.
 \end{enumerate}
\end{proposition}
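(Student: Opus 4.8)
The plan is to reduce the statement to a one-line divisibility argument inside the lattice $\ker(f)$, using only facts already assembled above. Recall first that a fixed-point-free automorphism of $\C$ is necessarily a nontrivial translation, so we may write $\gamma(z) = z + \delta$ with $\delta \neq 0$; this is exactly the normal form noted in the discussion preceding the statement. By part (a) of Proposition \ref{prop:4.4} we have $\gamma^m \in \ker(f)$, and since $\gamma$ is translation by $\delta$, its $m$-th iterate $\gamma^m$ is translation by $m\delta$. Identifying translations with their translation vectors, this says precisely that $m\delta$ belongs to the group $\ker(f)$, and in particular $\ker(f) \neq \{0\}$, so only the rank-one and rank-two lattice cases remain, as already remarked.

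In case (i), where $\ker(f) = \Z\omega$, the membership $m\delta \in \ker(f)$ yields $m\delta = k\omega$ for some $k \in \Z$, hence $\delta = (k/m)\omega = r\omega$ with $r = k/m \in \Q$ and $\delta \neq 0$. In case (ii), where $\ker(f) = \Z\omega_1 + \Z\omega_2$, it yields $m\delta = k_1\omega_1 + k_2\omega_2$ with $k_1, k_2 \in \Z$, hence $\delta = (k_1/m)\omega_1 + (k_2/m)\omega_2 = r_1\omega_1 + r_2\omega_2$ with $r_1, r_2 \in \Q$ and $\delta \neq 0$. This exhausts the two possibilities and produces the asserted rational coefficients.

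There is no real obstacle here: the substantive work---the descent $f \mapsto \hat{f}$, the conclusion $\gamma^m \in \ker(f)$, and the classification of $\ker(f)$ as a discrete group of translations---has already been carried out in Proposition \ref{prop:4.4} and the paragraphs preceding the statement. The only point requiring a moment's care is the identification of the abstract subgroup $\ker(f) \subset \mathrm{Aut}(\C)$ with its lattice of translation vectors, so that the relation $\gamma^m \in \ker(f)$ becomes the concrete congruence $m\delta \in \Z\omega$ (respectively $m\delta \in \Z\omega_1 + \Z\omega_2$); once this identification is in place, the rationality of $\delta/\omega$ (resp. of the coordinates of $\delta$ in the basis $\omega_1, \omega_2$) is immediate.
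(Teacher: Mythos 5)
Your argument is correct and coincides with the paper's own reasoning: the paper likewise observes that a fixed-point-free automorphism of $\C$ is a translation $z \mapsto z+\delta$, invokes $\gamma^m \in \ker(f)$ from Proposition \ref{prop:4.4} to place $m\delta$ in the lattice $\ker(f)$ (which is therefore nontrivial), and concludes the rationality of the coefficients of $\delta$ by dividing by $m$. No discrepancies to report.
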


The example listed in section 2.4 yields an example for this situation.

\subsubsection{The case of the (open)  upper half-plane $\D = \mathbb{H}$ }

We look for an example of a minimal Lagrangian immersion  $f: \mathbb{H} \rightarrow \C P^2$ 
which has a symmetry, $(\gamma, \mathcal{R})$, such that $\mathcal{R}^m = id$ for some positive integer $m$, but where $\gamma \in  \mathrm{Aut}(\mathbb{H}) \cong PSL(2,\R)$ does not have any fixed point in $ \mathbb{H}$.
 
In  a case as considered we know that the matrix $A\in SL(2,\R)$ representing 
the M\"{o}bius transformation $\gamma$ can be assumed to either have the double eigenvalue $1$ or two different positive eigenvalues $a_0$ and $a_0^{-1}$.

In the first case $\gamma$ acts by a translation 
parallel to the real  axis and in the second case 
by $\gamma.z = a_0^2z$ which turns into a translation parallel to the real axis 
after an application of the biholomorphic transformation $ \mathbb{H} \rightarrow \St$, $z \mapsto \ln(z) - i \frac{\pi}{2}$ with 
$\St = \R \times (-\frac{\pi}{2}, \frac{\pi}{2})$.

Note, by our choice of mapping the real axis becomes a central axis of the strip $\St$.
As a consequence, the strip $\St$ is invariant under complex conjugation.

Thus in all cases we can assume $\gamma\cdot z = z + p$, $p \in \R$, where $z$ is in $ \mathbb{H}$ or in $\St$. Transporting $f$ to the corresponding strip we obtain for all $z$:

\begin{enumerate}
\item $f(z + p) = \mathcal{R} f(z),$
\item $f(z+ mp) = \mathcal{R}^m f(z) = f(z).$
\end{enumerate}

In the case when $m = 1$, i.e. in the case where $\mathcal{R}^m =  \mathcal{R} = I$, the immersion descends to a minimal Lagrangian immersion from the cylinder
$\C / {p  \mathbb{Z}}$ to $\C P^2$. 
In all other cases one obtains a minimal Lagrangian immersion from the 
cylinder $\C / {mp  \mathbb{Z}}$ to $\C P^2$ with an additional $m$-fold symmetry.

It seems that one can determine, similar to \cite{DoKoCoarse}, what potentials one needs to choose to obtain minimal Lagrangian cylinders with an $m$-fold symmetry as described above. This may be discussed elsewhere.
Instead, we start from the strip $\St$ defined just above and consider potentials which lead to cylinders with an $m$-fold symmetry.

To this end, let $a,b, \delta: \St \rightarrow \C$ be any holomorphic functions on 
 $\St$ of period  $2\pi$ which cannot be extended beyond any point of the boundary lines of  $\St$ and are real along the real axis. 
 
 Consider the matrix differential one-form 
 $$\eta=
 \begin{pmatrix}
i\delta (z) & - \overline{ b(\bar{z})}&a( z)\\
  b(z) & - i \delta(z) & - \overline{ a(\bar{z})}\\
  - \overline{ a(\bar{z})}&a( z)& 0
  \end{pmatrix}dz.$$

 Note that $\eta$ is actually defined on all of $\St$, skew-Hermitian along the real axis and periodic of period $2 \pi$.
 Then the solution to the (matrix) ODE $dC = C \eta$, $C(z=0) = I$ 
  satisfies $C(z+ 2 \pi) = \mathcal{R} C(z)$ and is unitary along the real axis.
  
  Next we introduce the parameter $\lambda \in S^1$ as follows:
$$\eta_\lambda (z)=
 \begin{pmatrix}
 i\delta (z) & - \lambda \overline{ b(\bar{z})}& \lambda^{-1}a( z)\\
 \lambda^{-1} b(z) & - i\delta(z) & - \lambda\overline{ a(\bar{z})}\\
  - \lambda\overline{ a(\bar{z})}& \lambda^{-1}a( z)& 0
  \end{pmatrix}dz.$$
Note that $\eta_\lambda$ is contained in $\Lambda sl(3,\C)_\sigma$, is primitive 
  relative to $\sigma$ and skew-hermitian along the real axis, and  
$2\pi$-periodic.

Let $C(z, \lambda) $ denote a solution to $dC=C \eta$, $C(0,\lambda)  = I$. 
Then there exists $\chi(\lambda)\in \Lambda SU(3)_{\sigma}$ such that 
$\gamma^*C=\chi(\lambda)C$, where $\gamma$ denotes the translation by $ 2 \pi$.
Assume now that  $\chi(\lambda = 1)$ has finite order $m$, i.e. that
 $\chi(\lambda = 1)^m = I$ holds.

Following the loop group procedure (applied to the primitive $1$-form $\eta_\lambda$ above), we 
perform  the  Iwasawa decomposition $C = \F W_+$ and  obtain 
$\gamma^*\F=\chi(\lambda) \F k$ for $k\in K$.

Defining  $\f_\lambda$ as the last column of $\F_\lambda$ and $f_\lambda$ as the projection of $\f_\lambda$ to $\C P^2$ we obtain
$$\gamma^*f _\lambda =  [\chi(\lambda)] f_\lambda.$$ 
 Moreover,  $f_\lambda $ is 
a  minimal Lagrangian immersion defined on $\St.$ 
The surface $f = f_{\lambda=1}$  descends, 
because of  $ \mathcal{R}^m = [\chi(\lambda= 1)]^m= I,$  to a minimal Lagrangian immersion $$ f_{\mathcal{C}} : \St / {m 2 \pi \mathbb{Z}} \rightarrow \C P^2,$$
on which we have the symmetry $(\gamma, [\chi(\lambda = 1)] = \mathcal{R})$, and where 
$\mathcal{R}$ has finite order $m$ and $\gamma$ has no fixed point in $\St.$

\begin{remark}
In the last example a crucial step is, where we assume that  $\chi(\lambda = 1)^m = I$ holds. 
We will show in a planned continuation to this paper how one can find functions $a,b,\delta$ as above such that this additional assumption holds.
\end{remark}

\section{Minimal Lagrangian surfaces with large groups of symmetries}
\label{sec:large symmetries}

For immersed surfaces in a manifold $N$ two major types of symmetries occur: 

-- at one hand  the transformations of $N$ which leave the surface invariant, called 
 \lq\lq extrinsic symmetries\rq\rq;

-- on the other hand the group of isometries of the induced metric, called 
\lq\lq intrinsic  symmetries\rq\rq.

In this paper, the group of  extrinsic  symmetries of a minimal Lagrangian surface consists, by definition, of all
pairs $(\gamma, \mathcal{R}) \in
(\mathrm{Aut}(M), \mathrm{Iso}_0(\C P^2)),$ such that 
\begin{equation} \label{basic-symmetry}
f(\gamma\cdot z) = \mathcal{R}  f(z) \hspace{2mm} \mbox{for all} \hspace{2mm} z \in M.
\end{equation}

It thus is a closed subgroup of $(\mathrm{Aut}(M), \mathrm{Iso}_0(\C P^2))$, whence a Lie group. But also the group $\Gamma_M$ 
of first components is a closed subgroup of  
$\mathrm{Aut}(M)$, see Theorem \ref{closed} on symmetries and therefore a Lie group.

\subsection{Lie groups $\Gamma_M$ of extrinsic symmetries of dimension $ \geq 2$}
If the dimension of $\Gamma_M$ is at least two, then the surface is homogeneous.
In this case the classification is well known (see \cite{DoMaExplicit}, Section 7,  for references)

\begin{theorem}
\begin{enumerate}
\item Every minimal Lagrangian immersion $f:S^2 \rightarrow \C P^2$ is homogeneous and $f(S^2)$ is, up to isometries of $\C P^2,$ contained in $\R P^2$.

\item  If $\D$ denotes the unit disk in $\C$, then there does not exist any  homogeneous, minimal Lagrangian immersion  $f: \D \rightarrow \C P^2$ .

\item Every homogeneous minimal Lagrangian immersion $f:\C \rightarrow \C P^2$  is isometrically isomorphic with the Clifford torus.
\end{enumerate}
\end{theorem}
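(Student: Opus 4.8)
The plan is to reduce all three assertions to a single curvature dichotomy. A surface with $\dim \Gamma_M \ge 2$ is homogeneous, and each symmetry $(\gamma,\mathcal{R})$ acts by a genuine isometry $\mathcal{R}$ of $\C P^2$; hence both the Gaussian curvature $K$ of the induced metric $g=2e^u\,dz\,d\bar z$ and the scale-invariant quantity $c:=e^{-3u}|\psi|^2$ attached to the Hopf differential are invariants of the symmetry group. Since that group acts transitively, $K$ and $c$ are constant, and, being homogeneous, the induced metric is complete. From $K=-e^{-u}u_{z\bar z}$ together with the Gauss equation \eqref{eq:mLsurfaces} I would compute $K=1-c$, so that $K\le 1$ always.

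The crucial observation is that homogeneity forces $K\in\{0,1\}$. Indeed, if $c=0$ then $\psi\equiv 0$, the immersion is totally geodesic, and $K=1$. If instead $c>0$, then $|\psi|^2=c\,e^{3u}$ is nowhere zero, so the holomorphic Hopf function $\psi$ never vanishes; consequently $\log|\psi|^2$ is harmonic, and from $\log|\psi|^2=\log c+3u$ I get $u_{z\bar z}=0$, i.e. $K=0$ and $c=1$. Thus every homogeneous minimal Lagrangian surface is either totally geodesic ($K=1$) or flat ($K=0$), with no intermediate possibility.

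I would then dispatch the three statements. For (1), a minimal Lagrangian $S^2$ has genus zero and therefore carries no nonzero holomorphic cubic differential, so $\psi\equiv 0$; by the classical rigidity recalled in the introduction (\cite{Yau1974,Naitoh-Takeuchi82}) it is the standard totally geodesic immersion with image $\R P^2$, which is homogeneous. For (2), the unit disk is of hyperbolic conformal type, whereas a complete simply connected surface of constant curvature $0$ or $1$ is biholomorphic to $\C$ or to $S^2$, never to $\D$; hence the dichotomy leaves no room for a homogeneous immersion on $\D$. For (3), completeness rules out $K=1$ (which would produce the compact $S^2$), so the surface is flat with $c=1$; passing to a global flat coordinate makes $u$ constant, whence $\psi$ is a unimodular constant, and Wu's formula \eqref{eq:Wu-eta} shows the normalized potential is a constant $\lambda^{-1}A$ with $[A,\tau(A)]=0$, i.e. a vacuum. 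By the vacuum Proposition of Section~\ref{sec:vacuum} this potential is gauge- and coordinate-equivalent to that of the Clifford torus, and hence $f$ is isometrically isomorphic to the Clifford torus.

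The step I expect to be the main obstacle is this last identification in (3): to upgrade \lq\lq same normalized potential up to gauge and coordinate change\rq\rq\ to \lq\lq congruent immersions\rq\rq\ one must invoke the uniqueness built into the loop group correspondence and verify that the admissible gauge transformations and coordinate changes descend to an isometry of $\C P^2$ carrying $f$ onto the Clifford torus. A lesser but genuine subtlety lies in (2), where I must use that homogeneity really does force completeness, so that the constant value of $K$ alone pins down the Riemann surface and excludes the hyperbolic disk.
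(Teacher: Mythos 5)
The paper itself offers no proof of this theorem --- it is stated as ``well known'' with a pointer to \cite{DoMaExplicit}, Section 7 --- so your self-contained argument is necessarily a different route, and I find it correct. Its engine is the identity $K=1-c$, $c=e^{-3u}|\psi|^2$, read off from the Gauss equation \eqref{eq:mLsurfaces}, combined with three facts you use correctly: a transitive group of symmetries $(\gamma,\mathcal{R})$ makes each $\gamma$ an isometry of $g$ preserving the cubic differential (since $\mathcal{R}$ is a holomorphic isometry of $\C P^2$), so $K$ and $c$ are constant; a Riemannian homogeneous surface is complete; and if $c>0$ then $\psi$ is a nowhere-vanishing holomorphic function, so $u=\tfrac13(\log|\psi|^2-\log c)$ is harmonic and the Gauss equation forces $c=1$. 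The resulting dichotomy $K\in\{0,1\}$ settles (2) by uniformization and reduces (3) to the flat case, where completeness makes the developing map of $2e^u\,dz\,d\bar z$ an affine automorphism of $\C$, hence $u$ and $\psi$ constant with $|\psi|^2=e^{3u}$; Wu's formula \eqref{eq:Wu-eta} then exhibits the normalized potential as a vacuum, and the Proposition of Section \ref{sec:vacuum} applies. The two caveats you flag are genuine but are both already covered by material in the paper: the meromorphic DPW theorem states that the two constructions are inverse to each other, so a normalized potential (with the base-point normalization) determines the immersion uniquely, and the admissible deformations --- conjugation by a constant diagonal unitary $T$ with $Te_3=e_3$ and a linear change of $z$ --- act on $f$ by the isometry $[T]\in PSU(3)$ and by reparametrization, which is exactly what ``isometrically isomorphic with the Clifford torus'' requires. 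Part (1) you rightly handle outside the dichotomy, by the genus-zero vanishing of $\psi$ and the classical rigidity of totally geodesic Lagrangians, which is also how the paper's introduction treats $S^2$. Compared with the paper's bare citation, your argument buys an internal, checkable proof using only identities already displayed in the text; what it does not reprove are the standard external facts (Killing--Hopf, homogeneity implies completeness, and the Yau/Naitoh--Takeuchi rigidity), which is a reasonable trade.
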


\subsection{Lie groups $\Gamma_M$ of  extrinsic symmetries of  dimension $1$}
We exclude from here on the homogeneous cases.

Assume now that the minimal Lagrangian immersion $f: M \rightarrow \C P^2$
 is \emph{equivariant}, i.e., it admits a  one-parameter group $(\gamma_t, R(t)) \in 
(\mathrm{Aut}(M), \mathrm{Iso}_0(\C P^2))$ of extrinsic automorphisms, 
meaning that the symmetries are induced by isometries of $\C P^2$.

Then, up to a biholomorphic change of domains and possibly a transition to the 
universal cover, one obtains exactly two types of immersions, all defined on contractible domains in $\C$.

\begin{enumerate}
\item \emph{rotationally equivariant} minimal Lagrangian immersions, i.e. those, where the one-parameter group is the full group of rotations about a point $z_0 \in \D$,

\item \emph{translationally equivariant} minimal Lagrangian immersions, i.e. those, where $\D$ can be realized as a strip and the one-parameter group as the full group of translations  (without loss of generality,  parallel to the real axis).
\end{enumerate}

Both cases have been investigated, see \cite{DoMaNewLook}, \cite{DoMaExplicit}.  We briefly summarize the main results:
First we mention \cite{DoMaExplicit}, Theorem 5.

\begin{theorem}\label{Th6.4}
Any  minimal Lagrangian immersion $f$ from $\C$ or $S^2$ into $\C P^2$ which is rotationally equivariant has a vanishing cubic Hopf differential, and therefore  is  totally  geodesic in $\C P^2$ and  its image
 is, up to isometries of $\C P^2$, contained in 
$\R P^2$.
\end{theorem}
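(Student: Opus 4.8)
The plan is to reduce everything to showing that the cubic Hopf differential $\Psi = \psi \, dz^3$ vanishes identically; once this is known, the immersion has vanishing traceless second fundamental form, hence is totally geodesic, and the classical classification of totally geodesic Lagrangian surfaces (\cite{Yau1974, Naitoh-Takeuchi82}) places its image, up to an isometry of $\C P^2$, inside $\R P^2$. The real content is therefore the vanishing of $\psi$, which I would extract from the \emph{equivariance} of $\Psi$ under the full rotation group.

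After translating the rotation center to the origin I would write the one-parameter group as $\gamma_t \cdot z = e^{it} z$, with associated isometries $\mathcal{R}_t = R(t)$ of $\C P^2$. Applying Proposition \ref{homphi} to each $\gamma_t$ gives the horizontal-lift relation $\f(e^{it}z) = \phi(\gamma_t)\f(z)$, in which $\phi(\gamma_t) = c(\gamma_t) R_t \in U(3)$ is \emph{constant in $z$}. Differentiating this identity twice in $z$ and once in $\bar z$, and using $\bar z \mapsto e^{-it}\bar z$, I would obtain $\f_{zz}(e^{it}z) = e^{-2it}\phi(\gamma_t)\f_{zz}(z)$ and $\f_{\bar z}(e^{it}z) = e^{it}\phi(\gamma_t)\f_{\bar z}(z)$. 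Feeding these into the definition \eqref{eq:phipsi} of $\psi$ and using that $\phi(\gamma_t)$ is unitary, so that it cancels in the Hermitian pairing and the $S^1$-ambiguity $c(\gamma_t)$ plays no role, yields the transformation law $\psi(e^{it}z) = e^{-3it}\psi(z)$, equivalently $\gamma_t^* \Psi = \Psi$.

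Since $\psi$ is holomorphic by the Codazzi equation \eqref{eq:Codazzi}, on $\D = \C$ it has a Taylor expansion $\psi(z) = \sum_{n \geq 0} a_n z^n$; comparing coefficients in the identity $\sum_n a_n e^{int} z^n = \sum_n a_n e^{-3it} z^n$ for all $t$ forces $a_n = 0$ whenever $n \neq -3$, and since $n \geq 0$ this gives $a_n = 0$ for every $n$, so $\psi \equiv 0$. On $S^2$ the vanishing is automatic, as a genus-zero surface carries no nonzero holomorphic cubic differential. Then \eqref{eq:mLsurfaces} reduces to the Liouville equation $u_{z\bar z} + e^u = 0$, the immersion is totally geodesic, and the identification of the image with $\R P^2$ follows.

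I expect the only genuine obstacle to be the bookkeeping in the transformation law, and in particular pinning down the exponent as $e^{-3it}$ rather than $e^{+3it}$: the latter sign would be compatible with $\psi = a_3 z^3$ and would fail to force vanishing. Getting this right hinges on tracking the weights $e^{-2it}$ and $e^{it}$ correctly through the two holomorphic and one antiholomorphic derivatives and on the unitarity cancellation of $\phi(\gamma_t)$. The passage from $\psi \equiv 0$ to the totally geodesic conclusion, and the identification with $\R P^2$, I would simply quote from \cite{Yau1974, Naitoh-Takeuchi82}.
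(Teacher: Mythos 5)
Your argument is correct. Note first that the paper itself offers no proof of Theorem \ref{Th6.4}: it is quoted from \cite{DoMaExplicit}, Theorem 5, so there is no in-paper argument to compare against; your derivation is a clean, self-contained substitute. The key computation checks out: from $\f(e^{it}z)=\Phi_t\,\f(z)$ with $\Phi_t\in U(3)$ constant in $z$ --- which you already get from Lemma \ref{trafolift}(2) applied to each $\gamma_t$ separately, so the full homomorphism statement of Proposition \ref{homphi} is not even needed --- the chain rule gives $\f_{zz}\mapsto e^{-2it}\Phi_t\f_{zz}$ and $\f_{\bar z}\mapsto e^{it}\Phi_t\f_{\bar z}$, and since the pairing in \eqref{eq:phipsi} is Hermitian the second factor enters conjugated, contributing $\overline{e^{it}}=e^{-it}$; hence $\psi(e^{it}z)=e^{-3it}\psi(z)$, equivalently $\gamma_t^*\Psi=\Psi$, as it must be because $\mathcal{R}_t$ is a holomorphic isometry and the constant phase $c(\gamma_t)$ cancels by unitarity. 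Comparing Taylor coefficients of the entire function $\psi$ then forces $\psi\equiv 0$ on $\C$ (the exponent $-3$ is on the correct side, so no nonzero monomial survives), and on $S^2$ the vanishing is the genus-zero observation already made in the paper's introduction. The final step, that $\psi\equiv 0$ implies totally geodesic and that the image then lies in $\R P^2$ up to an isometry, is correctly delegated to \cite{Yau1974, Naitoh-Takeuchi82} and is corroborated by the explicit Iwasawa computation for the case $\psi\equiv 0$ in Section \ref{Sec:radially symmetric}.
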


For translationally equivariant surfaces, we obtain, see  \cite{DoMaNewLook}, Theorem 6.

\begin{theorem}
For any translationally equivariant minimal Lagrangian immersion, the extended frame $\F$ can be chosen such that
$\F (0, \lambda) = I$ and 
\begin{equation}\label{eq:equiv-F_lambda-0}
\mathbb{F}(t+z, \lambda)=\chi(t, \lambda)\mathbb{F}(z, \lambda),
\end{equation}
holds, where $\chi(t, \lambda)$ is a one-parameter group in $SU(3)$.
\end{theorem}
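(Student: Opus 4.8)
The plan is to specialize the transformation behaviour of the extended frame recorded in \eqref{trafoF} to the one-parameter group of translations and to recognize the cocycle $\chi$ as the frame itself. By the characterization of translational equivariance recalled just above, after a biholomorphic identification of the domain with a strip I may assume that the one-parameter group acts by $\gamma_t \cdot z = z + t$ with $t \in \R$, and that for each $t$ the pair $(\gamma_t, R(t))$ is a symmetry of $f$, so that $f(z+t) = R(t) f(z)$. First I would invoke Proposition \ref{prop:frame} to normalize the extended frame at the base point, i.e.\ to assume $\F(0, \lambda) = I$.

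Next I would apply \eqref{trafoF} to each $\gamma_t$, which gives
\begin{equation*}
\F(z+t, \lambda) = \chi(t, \lambda)\, \F(z, \lambda)\, k(\gamma_t, z),
\end{equation*}
with $\chi(t, \lambda) \in \Lambda SU(3)_{\sigma}$ and $k(\gamma_t, z) = \di(|\gamma_t'|/\gamma_t',\, |\gamma_t'|/\bar{\gamma}_t',\, 1)$. The decisive simplification is that a translation has $\gamma_t' \equiv 1$, so that $k(\gamma_t, z) = \di(1,1,1) = I$ identically; hence the gauge factor drops out and I am left with $\F(z+t, \lambda) = \chi(t, \lambda)\, \F(z, \lambda)$. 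Setting $z = 0$ and using $\F(0,\lambda) = I$ then identifies $\chi$ with the frame restricted to the real axis, namely $\chi(t, \lambda) = \F(t, \lambda)$, so that in particular no residual scalar factor survives and $\chi(t,\lambda)$ indeed lies in $\Lambda SU(3)_{\sigma}$.

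Finally I would verify the one-parameter group property. Evaluating the relation $\F(z+t,\lambda) = \chi(t,\lambda)\F(z,\lambda)$ at $z = s$ and using $\chi(t,\lambda) = \F(t,\lambda)$ yields
\begin{equation*}
\chi(t+s, \lambda) = \F(s+t, \lambda) = \chi(t, \lambda)\, \F(s, \lambda) = \chi(t, \lambda)\, \chi(s, \lambda),
\end{equation*}
while $t = 0$ forces $\chi(0, \lambda) = I$. Smoothness of $t \mapsto \chi(t, \lambda)$ follows from the real-analyticity of the frame $\F$ along the real axis, or alternatively from the analyticity statement in Theorem \ref{closed}; together with the multiplicative law this exhibits $\chi(t,\lambda)$ as a one-parameter group in $\Lambda SU(3)_{\sigma}$, and hence, for each fixed $\lambda \in S^1$, a one-parameter group in $SU(3)$.

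I expect the only genuine obstacle to be the trivialization of the gauge factor $k(\gamma_t, z)$ together with the disappearance of the a priori scalar ambiguities $\hat{c}$ carried by the general symmetry formalism. The point is that these could in principle obstruct the clean multiplicative law, but once the identification $\chi(t, \lambda) = \F(t, \lambda)$ is in place they are automatically absorbed into the normalized frame, after which the remaining cocycle computation is purely formal.
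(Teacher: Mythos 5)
Your argument is correct. Note that the paper does not actually prove this statement itself --- it imports it from \cite{DoMaNewLook}, Theorem 6 --- so a line-by-line comparison is not possible; but your route is exactly the one the paper's own symmetry formalism is built for. You specialize \eqref{trafoF} to $\gamma_t\cdot z=z+t$, observe that $\gamma_t'\equiv 1$ kills the gauge factor $k(\gamma_t,z)=\di(1,1,1)=I$, and then pin down $\chi(t,\lambda)=\F(t,\lambda)$ by evaluating at the normalized base point; this simultaneously disposes of the scalar ambiguities $\hat{c}(t)$ and makes the cocycle identity $\chi(t+s,\lambda)=\chi(t,\lambda)\chi(s,\lambda)$ immediate, with continuity in $t$ inherited from the frame. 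The closest in-paper analogue is Case 1 of the proof of Theorem \ref{Th6.5}, which reaches the same conclusion $\gamma_t^*\F=\chi_t\F$ by a slightly different mechanism: there one first shows the Maurer--Cartan form $\alpha$ is invariant under the translations (since $e^u$ depends only on $y$ and $\psi$ is constant) and deduces the transformation law of $\F$ from uniqueness of solutions of $d\F=\F\alpha$. That variant has the advantage of producing the equivariance from intrinsic data alone (it does not presuppose the extrinsic symmetry), whereas your argument, which starts from the extrinsic symmetry \eqref{trafoF}, is shorter and identifies $\chi(t,\lambda)$ explicitly as the restriction of the frame to the real axis --- which is also what makes Theorem \ref{Th6.3} (the constant potential $D(\lambda)=\F^{-1}\partial_x\F|_{z=0}$) fall out naturally. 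No gaps.
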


Using this one can apply  \cite{BuKi} and obtains, see \cite{DoMaNewLook},  Proposition 3.

\begin{theorem} \label{Th6.3}
A minimal Lagrangian surface in $\mathbb{C}P^2$ is translationally equivariant if and only it is generated by a degree one constant potential
$D(\lambda)dz$.
In this case the immersion can be defined without loss of generality on all of $\C$. 
The potential function $D(\lambda)$ can be obtained from the extended frame $\F$ satisfying \eqref{eq:equiv-F_lambda-0} and $\F(0,\lambda)=I$ by the equation
$$D(\lambda)=\F(z,\lambda)^{-1}\partial_x\F(z,\lambda)|_{z=0}.$$
\end{theorem}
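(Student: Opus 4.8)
The plan is to prove the two implications separately and to read off the domain statement from the global Iwasawa decomposition of Theorem \ref{Thm:Iwasawa}. Throughout I would use the preceding theorem, which, for a translationally equivariant immersion, provides an extended frame $\F$ normalized by $\F(0,\lambda)=I$ and satisfying \eqref{eq:equiv-F_lambda-0}, i.e. $\F(t+z,\lambda)=\chi(t,\lambda)\F(z,\lambda)$, where $t\mapsto\chi(t,\lambda)$ is a one-parameter subgroup of $\Lambda SU(3)_\sigma$. Since most objects depend on $z$ and $\bar z$, I recall $\partial_x=\partial_z+\partial_{\bar z}$.

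For the forward implication I would first exploit that a smooth one-parameter subgroup of the Banach Lie group $\Lambda SU(3)_\sigma$ has the form $\chi(t,\lambda)=\exp\!\big(tD(\lambda)\big)$ with $D(\lambda)=\partial_t\chi(t,\lambda)|_{t=0}\in\Lambda\mathfrak{su}(3)_\sigma$. Differentiating the equivariance relation in $t$ at $t=0$ gives $\partial_x\F(z,\lambda)=D(\lambda)\F(z,\lambda)$ for all $z$, and evaluation at $z=0$ with $\F(0,\lambda)=I$ yields exactly $D(\lambda)=\F(z,\lambda)^{-1}\partial_x\F(z,\lambda)|_{z=0}$. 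Using $\F^{-1}\partial_z\F=\lambda^{-1}U_{-1}+U_0$ and $\F^{-1}\partial_{\bar z}\F=\lambda V_1+V_0$ from \eqref{eq:mathbbF}, this $D$ is the Laurent polynomial $\lambda^{-1}U_{-1}(0)+(U_0+V_0)(0)+\lambda V_1(0)$, hence a \emph{constant potential of degree one}. I would then set $C(z,\lambda)=\exp\!\big(zD(\lambda)\big)$, which is entire in $z$ and satisfies $C^{-1}dC=D(\lambda)\,dz$, the claimed constant degree-one holomorphic potential.

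The heart of the matter — and the step I expect to be the main obstacle — is to verify that this $C$ is a holomorphic extended frame for the \emph{given} $\F$, i.e. that $V_+:=\F^{-1}C$ lies in $\Lambda^+ SL(3,\C)_\sigma$, so that $C=\F V_+$ is the DPW splitting and $\F$ is recovered from $D(\lambda)\,dz$ via Iwasawa decomposition. The plan is to differentiate $V_+$ directly. Since $\partial_{\bar z}C=0$ and $D=(\partial_x\F)\F^{-1}$ gives $\F^{-1}D\F=\F^{-1}\partial_x\F$, one computes $\partial_z V_+=(\lambda V_1+V_0)V_+$ and $\partial_{\bar z}V_+=-(\lambda V_1+V_0)V_+$; hence $\partial_x V_+=0$, so $V_+$ depends only on $y=\operatorname{Im} z$ and solves the linear ODE $\partial_y V_+=2i(\lambda V_1+V_0)V_+$ with $V_+(0,\lambda)=I$. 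Since the coefficient $2i(\lambda V_1+V_0)$ has no negative powers of $\lambda$ and its constant term $2iV_0$ is diagonal, hence in $\mathfrak{k}^{\C}=\mathfrak{g}_0^{\C}$, it takes values in $\Lambda^+\mathfrak{sl}(3,\C)_\sigma$; therefore the solution stays in the subgroup $\Lambda^+ SL(3,\C)_\sigma$, which is precisely the required membership.

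For the converse I would start from a constant degree-one potential $D(\lambda)\,dz$ with $D(\lambda)\in\Lambda\mathfrak{su}(3)_\sigma$, solve $dC=C\,D(\lambda)\,dz$, $C(0,\lambda)=I$, obtaining $C(z,\lambda)=\exp\!\big(zD(\lambda)\big)$, and Iwasawa-split $C=\F V_+$. For real $t$ one has $\exp\!\big(tD(\lambda)\big)\in\Lambda SU(3)_\sigma$, so from $C(z+t,\lambda)=\exp\!\big(tD(\lambda)\big)C(z,\lambda)$ the uniqueness in Theorem \ref{Thm:Iwasawa} forces $\F(z+t,\lambda)=\exp\!\big(tD(\lambda)\big)\F(z,\lambda)$, which is exactly translational equivariance with $\chi(t,\lambda)=\exp\!\big(tD(\lambda)\big)$. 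Finally, the domain statement is immediate: $\exp\!\big(zD(\lambda)\big)$ is entire in $z$ and, by the global Iwasawa decomposition of Theorem \ref{Thm:Iwasawa}, splits for every $z\in\C$, so $\F$ and with it $f=[\F e_3]$ is defined on all of $\C$. The reference \cite{BuKi} enters only to phrase this equivalence in the standard language of degree-one (polynomial Killing field) potentials.
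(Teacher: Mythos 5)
Your proof is correct, and it is worth noting that the paper itself offers no argument for this statement: it simply defers to \cite{BuKi} and to Proposition 3 of \cite{DoMaNewLook}. What you have written is a complete, self-contained version of exactly that standard Burstall--Kilian argument, and the one step that genuinely needs care --- showing that $V_+:=\F^{-1}\exp\bigl(zD(\lambda)\bigr)$ lies in $\Lambda^+ SL(3,\C)_\sigma$ --- is handled correctly: from $\partial_x\F=D(\lambda)\F$ one gets $\partial_z V_+=(\lambda V_1+V_0)V_+$, $\partial_{\bar z}V_+=-(\lambda V_1+V_0)V_+$, hence $\partial_x V_+=0$ and the linear ODE in $y$ with coefficient in $\Lambda^+\mathfrak{sl}(3,\C)_\sigma$ (note $V_0\in\mathfrak g_0^{\C}$, $V_1\in\mathfrak g_1^{\C}$) and initial value $I$, whose solution stays in the subgroup. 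The converse via uniqueness of the Iwasawa splitting of $\exp\bigl((z+t)D\bigr)=\exp(tD)\,C(z,\lambda)$ is likewise the intended argument. One small point of interpretation: your converse uses $D(\lambda)\in\Lambda\mathfrak{su}(3)_\sigma$, so that $\exp(tD)\in\Lambda SU(3)_\sigma$ for real $t$; this is consistent with the theorem, since the displayed formula $D(\lambda)=\F(z,\lambda)^{-1}\partial_x\F(z,\lambda)|_{z=0}$ forces $D$ to lie in the real twisted loop algebra, but you should state explicitly that ``degree one constant potential'' is to be read with this reality condition, as a general complex degree-one constant potential would require an extra step (Iwasawa splitting of $\exp(tD)$ itself) in the converse.
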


The translationally equivariant minimal Lagrangian surfaces have been investigated by many authors. For a classical description see \cite{CU94}. The case of tori was investigated more generally in \cite{MM}. The description of the potential $D(\lambda) dz$ above has been given in \cite{BuKi}. A more detailed study of these surfaces in the spirit of the present paper can be found in \cite{DoMaNewLook}.

\subsection{Coarse classification of the surfaces with one-parameter groups of isometries}
In the last subsection we recalled what is known for surfaces with one-parameter groups  of extrinsic symmetries.
The question is what one can say if one only considers one-parameter groups of 
isometries.
For the case of constant mean curvature surfaces this question has been answered by \cite{Smyth} in a well-received paper. Here we prove the main result of \cite{Smyth} for our setting.

\begin{theorem}\label{Th6.5}
Let $f: M \rightarrow \C P^2$ be a minimal Lagrangian immersion in conformal coordinates
and denote by $g = 2 e^{u} dz d\bar{z}$ the induced metric.
Assume that this induced metric admits on $M$ a one-parameter group of isometries.

Then up to some biholomorphic changes of the domain and possibly a transition to the universal cover 
one can assume that either the immersion is defined on a strip  $\St$  containing the real axis, the one-parameter group can be assumed to act by translations parallel to the real axis and  $f$ is actually translationally equivariant,  or the (natural) normalized potential for $f$ satisfies 
 \begin{equation} \label{genequ1}
\eta(p_t z, q_t\lambda)=T\eta(z,\lambda)T^{-1},
\end{equation}
with $p_t z = e^{ip_0t}z, q_t \lambda = 
e^{iq_0 t}\lambda $, $p_0,q_0, t \in \R$ and  $T = \mathrm{diag}(\tau, \tau^{-1},1)$ a unitary diagonal 
matrix with last entry $1$ and $\tau = e^{it_0t}$.
\end{theorem}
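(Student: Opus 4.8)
The plan is to convert the intrinsic hypothesis into a statement about holomorphic automorphisms and then read off the symmetry of the normalized potential from Wu's formula. First I would note that a one–parameter group of isometries of $g=2e^{u}dzd\bar z$ lies in the identity component of the isometry group, hence is orientation preserving and conformal; after passing to the universal cover (so that $\D\in\{\C,\mathbb{H},\text{disk}\}$) it therefore acts by a one–parameter group $\gamma_t$ of biholomorphic automorphisms of $\D$, and the isometry condition becomes
\begin{equation*}
u\circ\gamma_t+\log|\gamma_t'|^2=u .
\end{equation*}

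The decisive structural input is the behaviour of the Hopf differential $\Psi=\psi\,dz^3$. Using the Gauss equation \eqref{eq:mLsurfaces} in the form $|\psi|^2=e^{2u}(u_{z\bar z}+e^{u})$, whose right–hand side is built solely from the induced metric and its curvature and is therefore preserved by $\gamma_t$, I would deduce $|\gamma_t^{\ast}\psi|=|\psi|$. Since both $\psi$ and $\gamma_t^{\ast}\psi$ are holomorphic by the Codazzi equation \eqref{eq:Codazzi} and have equal modulus, their quotient is a unimodular constant, and the group law forces
\begin{equation*}
\gamma_t^{\ast}\Psi=e^{i\phi_0 t}\,\Psi
\end{equation*}
for some $\phi_0\in\R$. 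Thus $\gamma_t$ preserves $u$ but rotates $\Psi$ by a phase; equivalently it carries $f$ into the member $f^{\,e^{i\phi_0 t}}$ of its associated family, and this is exactly the phase that the loop parameter is designed to absorb (recall $\nu\sim\lambda^{-3}$).

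Next I would classify $\gamma_t$ up to biholomorphic conjugacy using the structure of $\operatorname{Aut}(\D)$, together with the parabolic/hyperbolic reductions to translations on a strip already carried out in Section \ref{sec:finiteorder}. Either $\gamma_t$ has a fixed point and is conjugate to the rotation $\gamma_t z=e^{ip_0 t}z$, or it has none and is conjugate to the translation $\gamma_t z=z+t$ on a strip $\St$ containing the real axis. In the rotation case $|\gamma_t'|=1$, so $u$ is rotation invariant, $u=u(z\bar z)$, and the eigenvalue relation forces $\psi(z)=c\,z^{k}$ with $k$ a nonnegative integer. Substituting into Wu's formula \eqref{eq:Wu-eta}, the $(1,3)$– and $(3,2)$–entries of $A_{-1}$ become constant while the $(2,1)$–entry is a constant multiple of $z^{k}$; then $z\mapsto e^{ip_0 t}z$ multiplies the $(2,1)$–entry (with the $dz$ factor) by $e^{i(k+1)p_0 t}$ and the other entries by $e^{ip_0 t}$. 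A direct bookkeeping shows that the choice $q_0=\tfrac{k+3}{3}p_0$ together with $T=\di(\tau,\tau^{-1},1)$, $\tau=e^{-ikp_0 t/3}$, exactly cancels these phases against $\lambda\mapsto q_t\lambda$, which is \eqref{genequ1}; the degenerate subcase $c=0$ gives $\psi\equiv0$ and, by Theorem \ref{Th6.4}, the totally geodesic $\R P^2$. In the no–fixed–point case $u=u(\operatorname{Im}z)$, and I would argue that $\gamma_t$, combined with the compensating loop rotation $q_t$ and a diagonal conjugation, assembles into a one–parameter group acting on the extended frame, which by Theorem \ref{Th6.3} is generated by a constant degree–one potential, i.e. $f$ is translationally equivariant.

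The step I expect to be the main obstacle is precisely this loop–parameter compensation: proving rigorously that the phase rotation of $\Psi$ produced by the merely intrinsic isometry $\gamma_t$ is matched, uniformly in $t$, by a single rotation $\lambda\mapsto q_t\lambda$ and conjugation by one diagonal $T$, so that $\gamma_t$ is promoted to an honest symmetry \eqref{genequ1} of the normalized potential (equivalently, via Theorem \ref{thm-sym-rot}, of $\F_-$). A secondary difficulty is the clean separation of the two normal forms in the no–fixed–point case: since translational equivariance forces the Hopf differential to be constant (a unitary frame monodromy preserves $\psi=\f_{zz}\cdot\overline{\f_{\bar z}}$), one must show that the translation normal form is only compatible with $\phi_0=0$, and account for a surviving nonconstant phase — this is where the transition to the universal cover and the reductions of Section \ref{sec:finiteorder} enter, and where I would expect the most care to be required.
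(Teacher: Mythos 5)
Your proposal is essentially the paper's own argument: classify the one-parameter group of holomorphic automorphisms into rotations about a fixed point and translations on a strip, deduce that $e^{u}$ depends only on $r$ (resp.\ on $y$) and that $\psi$ is a constant multiple of a power of $z$ (resp.\ constant), and then match phases to define $q_t$ and $T(t)$. The one genuine variation is where the homogeneity condition is read off: you substitute into Wu's formula \eqref{eq:Wu-eta}, using that $u(z,0)\equiv u(0,0)$ because $u$ is a real-analytic function of $z\bar z$, whereas the paper first proves $\alpha(p_t z,q_t\lambda)=T(t)\alpha(z,\lambda)T(t)^{-1}$ for the Maurer--Cartan form of the extended frame and then pushes this through the Birkhoff decomposition; the two computations are equivalent, and your bookkeeping $q_0=\frac{k+3}{3}p_0$, $\tau=e^{-ikp_0t/3}$ matches the paper's $q_0=\frac{1}{3}(2k+n+3)p_0$, $t_0=\frac{1}{3}(k-n)p_0$ once one notes that in Wu's formula the $(13)$-entry is constant (the paper's $k$ is $0$ and its $n$ is your $k$). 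Your derivation of $\gamma_t^{*}\Psi=e^{i\phi_0t}\Psi$ from the Gauss equation \eqref{eq:mLsurfaces} makes precise a step the paper only asserts, so the \lq\lq main obstacle\rq\rq\ you anticipate is in fact fully handled by the explicit phase count you already perform.

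The \lq\lq secondary difficulty\rq\rq\ you flag in the translational case is a real gap, and it is present in the paper's proof as well: invariance of $|\psi|$ under $z\mapsto z+t$ together with holomorphy of $\psi$ yields only $\psi(z)=\psi_0e^{i\phi_0z}$ with $\phi_0\in\R$, not constancy, and for $\phi_0\neq0$ the surface is \emph{not} translationally equivariant (the translate lands in the associated family, not in the $PSU(3)$-orbit). This sub-case can be disposed of as follows: if $\phi_0\neq0$, the substitution $w=e^{i\phi_0z/3}$ renders the cubic form constant, carries the translations $z\mapsto z+t$ to the rotations $w\mapsto e^{i\phi_0t/3}w$, and produces a metric depending only on $|w|$, so after this biholomorphic change (and, if needed, a transition to the universal cover) one lands in the second, rotational/homogeneity alternative of the theorem; only $\phi_0=0$ produces the translationally equivariant alternative, exactly as you suspected.
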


\begin{proof}
In the first part of the claim we consider an immersion $f$ from a Riemann 
surface to $\C P^2$ and this immersion is conformal. Therefore, the connected component of the group of
isometries consists of holomorphic automorphisms of $M$.
We can thus apply the classification of Riemann surfaces with one-parameter groups of holomorphic automorphisms as discussed already above and obtain that there are
(up to biholomorphic equivalence and possibly transition to the  universal cover) just
two general types of one-parameter groups: groups of rotations about a point in $M$ and groups of translations in a strip (without loss of generality, parallel to the real axis).

{\bf Case 1 : The translational case:} In this case we note that the fact that the one-parameter group  consists of isometries implies that the conformal  metric factor $e^{u}$ only depends on $y$, putting $z = x + iy$.  Moreover, applying the one-parameter group of isometries to the 
(holomorphic) cubic form shows that its coefficient is constant.

Next we consider the Maurer-Cartan form $\alpha$ of our natural frame 
$\F$ which satisfies $\F(0) = I$. 
Then it is easy to verify that $\gamma_t^* \alpha = \alpha$ holds, where $\gamma_t$ denotes the one-parameter group of translations parallel to the real axis.
As a consequence, the frame $\mathcal{F}$ satisfies 
$\gamma_t^* \F = \chi_t \F$.
This shows that the immersion is translationally equivariant.

{\bf Case 2 : The rotational case:} In this case the one-parameter group of 
isometries consists of rotations (without loss of generality about the point $z=0$ which is contained in $M$). Now it is easy to verify that the conformal metric factor  only depends on $r$ and that the cubic form has as coefficient function a complex multiple of a (non-negative integer) power of $z$.

Now we consider the Maurer-Cartan form $\alpha_\lambda$ of the extended frame 
$\F(z, \bar{z}, \lambda)$. Then the $(13)$-entry of this Maurer-Cartan form picks up the factor $q_t^{-1}p_t$ by the operation of the one-parameter groups $p_t$ and $q_t$,
while the $(21)$-entry picks up the factor $q_t^{-1}p_t p_t^n,$ if the power of $z$ of the  
 $(21)$-entry is $n$. Now it is easy to show that there exists some one-parameter group 
 $\tau_t$ such that with $T(t) = \mathrm{diag}(\tau_t, \tau_t^{-1},1)$ we obtain 
 $\alpha(p_t.z, q_t.\lambda) = T(t) \alpha(z,\lambda) T(t)^{-1}$.
 
 As a consequence,  the frame $\F$ transforms like
\begin{equation}\label{transform_F}
 \F(p_t.z, q_t.\lambda) = H(t, \lambda) \F (z,\lambda) T(t)^{-1}.
 \end{equation}
 
 Note that here $H(t,\lambda) \in \Lambda SU(3)_\sigma$.
 In particular, the immersion $f$ derived from the frame $\F$ satisfies
 \begin{equation}
  f(p_t.z, q_t.\lambda) = H(t,\lambda) f(z,\lambda).
 \end{equation}

After modifying  the frame in \eqref{transform_F} by left-multiplication by a matrix independent of $z$ such that the new frame ( still denoted by the same letter) attains the value $I$ at $z =0$,  it 
 follows that $H(t, \lambda) = T(t)$ and thus is independent of $\lambda$.
 
 Now perform (at least locally near $z =0$) the (unique) Birkhoff decomposition of  
 $\F(p_t.z, q_t.\lambda) $:
 $$\F(p_t.z, q_t.\lambda) = L_-(z, \lambda) W_+(z, \bar{z}, \lambda),$$
 where $L_-(z,\lambda) = I + \mathcal{O}(\lambda)$.
 Then, since $H$ is diagonal, this implies
 $$  L_- (p_t.z, q_t.\lambda)  = T(t) L_-(z, \lambda) T(t)^{-1}.$$

Since the analogous relation holds for the Maurer-Cartan form $\eta$  of $L_-$, it follows that
the corresponding normalized potential $\eta$ satisfies equation (\ref{genequ1}).

 \end{proof}

 \begin{remark}
$(1)$ If we start from a normalized potential satisfying (\ref{genequ1}), then we will find in the next section that the corresponding surface actually does admit a one-parameter group of 
(intrinsic) isometries.

$(2)$ It is important to note that in this section we have considered rotational symmetries for which the fixed point is contained in the surface $M$ on which the isometries act.
If the fixed point is not contained in the surface $M$, then one can consider the universal cover of $M$. There the one-parameter group acts without loss of generality by translations and has a period.
\end{remark}

\section{Entire radially symmetric minimal Lagrangian immersions into $\C P^2$}
\label{Sec:entireradiallysym}

Radially symmetric  surfaces occur naturally in quantum cohomology and are discussed as immersions defined on $\C^*$ (\cite{DoGuRo, Guest, OK}), even though it would be more precise to lift the discussion to the universal cover $\C$. 

In this paper we concentrate, after a short general introduction, on surfaces defined on $\C$.

\subsection{The basic  setting and the basic formulas}
In our context we consider a minimal Lagrangian immersion 
 $f:\D_r^* \rightarrow \mathbb{C} P^2$, where $\D_r^* \equiv \{ 0 < |z| < r \} $ and write its  normalized potential in the form
 \begin{equation}\label{normeta}
\eta (z,\lambda) =  \lambda^{-1}\begin{pmatrix}
0&0&i a\\ 
i b&0&0\\
0& i a&0
\end{pmatrix}dz.
\end{equation}
  
We assume that this normalized potential satisfies the condition, encountered already above in \eqref{genequ1},
 \begin{equation}\label{genequ2}
\eta(p_t z, q_t\lambda)=T\eta(z,\lambda)T^{-1},
\end{equation}
with $p_t z = e^{ip_0t}z$, $q_t \lambda = 
e^{iq_0 t}\lambda$, $p_0$, $q_0$, $t \in \R$ and $T = \mathrm{diag}(\tau, \tau^{-1},1)$ a unitary diagonal matrix with last entry $1$ with $\tau = e^{it_0t}$.

In quantum cohomology this transformation property is called the \emph{homogeneity condition}.
We will use the same name for this condition.

After  a simple computation  we obtain

\begin{enumerate}
\item $q^{-1} p a(pz) = \tau a(z),$
\item $q^{-1}p b(pz) = \tau^{-2} b(z).$
\end{enumerate}

Since the normalized potential is (generally) meromorphic, we can consider a Laurent 
expansion of $\eta$  about $z=0$. Let $a_k$ be a non-vanishing coefficient in the Laurent expansion of $a$ and $b_n$ a non-vanishing coefficient in the Laurent expansion of $b$. 
Recall, if $a = 0$, then we do not get an immersed surface and if $b=0$, then we obtain a totally geodesic surface, i.e. an open part of $\R P^2 \subset \C P^2$.  
Therefore both these cases will be  avoided in this paper.

 Then the homogeneity condition implies
\begin{equation} \label{pqtau1}
q^{-1} p^{k+1} = \tau,
\end{equation}
and
\begin{equation}
 q^{-1}p^{n+1} = \tau^{-2}.
\end{equation}
Therefore, 
\begin{equation}
q^3 = p^{2k+n+3}.
\end{equation}
In particular,  $q_0 = \frac{1}{3}(2k + n + 3) p_0$ and 
$\tau = \exp(i{t_0 t})$ with $t_0 = \frac{1}{3}(k - n)  p_0$.
Furthermore, it is easy to verify that $a$ and $b$  can contain at most one pole:

Since we assume that the normalized potential is meromorphic, the condition above implies that the set of singularities  of $\eta$  in $\D_r^*$ can only consist of the origin.
Therefore,  altogether, the normalized potential $\eta$ is actually defined on $\mathbb{C}^*$ and has the form
\begin{equation} \label{etarad}
\eta (z,\lambda) =  \lambda^{-1}\begin{pmatrix}
0&0&i a_k z^k\\ 
i b_n z^n&0&0\\
0&i a_k z^k&0
\end{pmatrix}dz.
\end{equation}

After conjugation by a diagonal matrix with entries in $S^1$  and a scaling of the coordinate system
if necessary, we obtain

\begin{lemma}\label{normalized apsi}
In (\ref{etarad}) one can assume without loss of generality,  $a_k > 0$ and that $b_n$ is of the form
\begin{equation} \label{specific}
b_n = - {a_k}^{-2} \psi_0
  \hspace{2mm} \mbox{with} \hspace{2mm} \psi_0 < 0.
\end{equation}
\end{lemma}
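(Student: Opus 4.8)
The plan is to exploit the two normalizations permitted in the statement---conjugation by a diagonal unitary matrix and a holomorphic rescaling of the coordinate---and to track their combined effect on the pair of leading coefficients $(a_k,b_n)\in\C^*\times\C^*$ (both are nonzero, since $a\equiv 0$ and $b\equiv 0$ were excluded). First I would record these two effects explicitly. Conjugating the potential \eqref{etarad} by $D_\delta=\mathrm{diag}(e^{i\delta},e^{-i\delta},1)\in K$, which is the gauge action already used in Section \ref{sec:vacuum} and which descends to an isometry of $\C P^2$, produces a potential of the same shape with $(a_k,b_n)\mapsto(e^{i\delta}a_k,\,e^{-2i\delta}b_n)$. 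The holomorphic coordinate change $z\mapsto cz$ with $c=\rho e^{i\phi}\in\C^*$ is a biholomorphism of $\C^*$ which commutes with the rotations $p_t$, hence preserves the homogeneity condition \eqref{genequ2}, and it sends $(a_k,b_n)\mapsto(c^{k+1}a_k,\,c^{n+1}b_n)$.

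Writing $a_k=R_a e^{i\alpha}$ and $b_n=R_b e^{i\beta}$ with $R_a,R_b>0$, I would then observe that the two moves change the phase vector $(\alpha,\beta)$ into
\begin{equation*}
\bigl(\alpha+\delta+(k+1)\phi,\ \beta-2\delta+(n+1)\phi\bigr).
\end{equation*}
Reaching $\alpha=\beta=0$ therefore amounts to solving the linear system
\begin{equation*}
\begin{pmatrix}1 & k+1\\ -2 & n+1\end{pmatrix}\begin{pmatrix}\delta\\ \phi\end{pmatrix}=\begin{pmatrix}-\alpha\\ -\beta\end{pmatrix}
\end{equation*}
for the real unknowns $\delta,\phi$. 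Its determinant is exactly $2k+n+3$, so whenever $2k+n+3\neq 0$ there is a unique solution; the leftover scaling freedom $\rho>0$ may then be used to adjust the magnitude $R_a$. After this normalization $a_k>0$ and $b_n>0$, and setting $\psi_0:=-a_k^2 b_n$ yields $\psi_0<0$ and the required form \eqref{specific}.

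The conceptual invariant behind this computation, and the quantity I would emphasize, is $a_k^2 b_n$: it is unchanged by the conjugation (the factors $e^{2i\delta}$ and $e^{-2i\delta}$ cancel) and is multiplied by $c^{2k+n+3}$ under the rescaling. Thus $\psi_0=-a_k^2 b_n$ is, up to this scaling weight, the genuine geometric datum---indeed by Wu's formula (Proposition \ref{prop:Wu's formula}) it is precisely the leading coefficient of the cubic Hopf differential $\psi$. The rescaling therefore rotates $\psi_0$ about the origin, and the only point to check is that it can be brought onto the negative real axis.

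The main obstacle is the degenerate case $2k+n+3=0$, where the determinant vanishes and both normalizing moves fix $\psi_0$ entirely: the image of the phase map collapses to the line $\R\cdot(1,-2)$, which preserves $2\alpha+\beta=\arg(a_k^2 b_n)$, so the normalization to $\psi_0<0$ succeeds exactly when $a_k^2 b_n$ is already a positive real. I would dispose of this case by noting that it does not arise under the running hypotheses: when $a$ and $b$ are holomorphic one has $k,n\geq 0$, whence $2k+n+3\geq 3>0$, and in particular for the constant potentials ($k=n=0$) treated in the sequel the determinant equals $3$. Should one wish to admit the pole configurations with $2k+n+3=0$, the reality of $\psi_0$ would instead have to be extracted from the reality of the induced data (the metric factor $e^u$ and the cubic differential along the symmetry axis), which is where I expect the only genuine work to lie.
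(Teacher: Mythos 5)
Your proof is correct and takes essentially the same route as the paper, which offers no argument beyond the phrase ``after conjugation by a diagonal matrix with entries in $S^1$ and a scaling of the coordinate system'': your linear system in $(\delta,\phi)$ with determinant $2k+n+3$ is exactly that argument made explicit, and your identification of $-a_k^2 b_n=\psi_0$ as the gauge-invariant leading coefficient of the Hopf differential is precisely what Wu's formula gives. Your remark that the normalization degenerates when $2k+n+3=0$ addresses a case the paper silently ignores (it later divides by $2k+n+3$ in Corollary \ref{outersymmetry} and in the Painlev\'e substitution), so isolating it and disposing of it in the holomorphic setting is a refinement, not a gap.
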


With this notation  the cubic form is given by 
\begin{equation} \label{cubicrad}
\psi (z) = \psi_0 z^{2k + n}.
\end{equation}

Hence we obtain
\begin{theorem}
Let $\eta$ be a normalized potential for a minimal Lagrangian surface defined on 
$\D_r^*$ which satisfies the homogeneity condition (\ref{genequ2}) with 
$p_t z = e^{ip_0t}z$, $q_t \lambda = 
e^{iq_0 t}\lambda$, $p_0$, $q_0$, $t \in \R$ and $T$ a unitary diagonal matrix 
 $T = \mathrm{diag}(\tau, \tau^{-1},1)$, with $\tau = \exp(i{t_0 t})$.
 Then 
 \begin{itemize}
\item[a)] $q_0 = \frac{1}{3}(2k + n + 3) p_0$ and  $t_0 = \frac{1}{3}(k - n)  p_0$.

\item[b)] All coefficients of $\eta$ are complex multiples of some powers of $z$, this means that $\eta$ can be defined on  $\C^{*}$ and  $\eta$ has the form \eqref{etarad}, 
where one can also assume the properties are specified in \eqref{specific}.

\item[c)] The cubic form is a complex multiple of $z$, more precisely we have \eqref{cubicrad}.
\end{itemize}

Conversely, given a normalized potential of the form \eqref{etarad} and satisfying \eqref{specific},
define $p_t$, $q_t$ and  $T(t)$
with coefficients as in \rm{a)} above, then $\eta$ satisfies the homogeneity condition with these functions.
\end{theorem}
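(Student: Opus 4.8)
The plan is to assemble the four assertions directly from the two transformation laws already extracted from \eqref{genequ2}, namely $q^{-1}p\,a(pz) = \tau\, a(z)$ and $q^{-1}p\,b(pz) = \tau^{-2}b(z)$, where I abbreviate $p = e^{ip_0 t}$, $q = e^{iq_0 t}$, $\tau = e^{it_0 t}$. First I would establish the monomial form of $a$ and $b$ claimed in b). Writing the Laurent expansions $a(z) = \sum_j a_j z^j$ and $b(z) = \sum_j b_j z^j$ about the origin and substituting $a(pz) = \sum_j a_j p^j z^j$ turns the first law into $\sum_j q^{-1}p^{j+1}a_j z^j = \sum_j \tau a_j z^j$. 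Since this identity must hold for the whole one-parameter family, i.e. for a continuum of values of $p$, matching the coefficient of each $z^j$ forces $q^{-1}p^{j+1} = \tau$ whenever $a_j \neq 0$; as $p_0 \neq 0$ this can hold for at most one exponent $j$. Hence $a$ collapses to a single monomial $a_k z^k$, and the same argument applied to the second law gives $b = b_n z^n$. A monomial is holomorphic on $\C^*$ with at most a pole at the origin, so $\eta$ extends to all of $\C^*$ and takes the form \eqref{etarad}; invoking Lemma \ref{normalized apsi} then installs the normalization \eqref{specific}, completing b).

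With the exponents $k$ and $n$ fixed, I would read off a). The surviving coefficient relations are $q^{-1}p^{k+1} = \tau$ and $q^{-1}p^{n+1} = \tau^{-2}$; passing to the arguments of the exponentials (valid for all $t$) gives $-q_0 + (k+1)p_0 = t_0$ and $-q_0 + (n+1)p_0 = -2t_0$. Eliminating $t_0$ between these two linear equations produces $3q_0 = (2k+n+3)p_0$, that is $q_0 = \tfrac{1}{3}(2k+n+3)p_0$, and back-substitution then yields $t_0 = \tfrac{1}{3}(k-n)p_0$, which is a).

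For c) I would compare \eqref{etarad} with Wu's formula \eqref{eq:Wu-eta}: matching the $(1,3)$- and $(2,1)$-entries identifies $a = e^{u(z,0)-u(0,0)/2}$ and $b = -\psi\, e^{-2u(z,0)+u(0,0)}$, so that $\psi = -b\,a^2$. Substituting $a = a_k z^k$, $b = b_n z^n$ and the normalization $b_n = -a_k^{-2}\psi_0$ from \eqref{specific} gives $\psi(z) = \psi_0 z^{2k+n}$, which is exactly \eqref{cubicrad}.

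The converse is a direct verification: given $\eta$ of the form \eqref{etarad} satisfying \eqref{specific} and the functions $p_t$, $q_t$, $T(t)$ defined with the exponents of a), I would substitute $a(pz) = p^k a(z)$ and $b(pz) = p^n b(z)$ into the two scalar transformation laws and check, using $q_0 = \tfrac13(2k+n+3)p_0$ and $t_0 = \tfrac13(k-n)p_0$, that $q^{-1}p^{k+1} = \tau$ and $q^{-1}p^{n+1} = \tau^{-2}$ hold identically in $t$; reassembling the matrix entries then reproduces \eqref{genequ2}. I do not expect a serious obstacle, since every claim is an algebraic consequence of the two transformation laws. The one step that genuinely deserves care is the coefficient-matching argument in b): it relies on the homogeneity holding for the \emph{entire} one-parameter group (a continuum of $p$) together with $p_0 \neq 0$, and it is precisely this that rules out more than one nonzero Laurent coefficient while simultaneously confining any singularity of $\eta$ to the fixed point of the rotation at the origin.
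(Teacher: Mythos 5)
Your proposal is correct and follows essentially the same route as the paper: the paper likewise derives the two scalar laws $q^{-1}p\,a(pz)=\tau a(z)$ and $q^{-1}p\,b(pz)=\tau^{-2}b(z)$, matches Laurent coefficients to force $a$ and $b$ to be single monomials (which pins down $k,n$, gives a), and confines any singularity to the origin), invokes Lemma \ref{normalized apsi} for \eqref{specific}, and obtains the cubic form from the relation $\psi=-ba^2$ implicit in Wu's formula. Your explicit observation that the coefficient-matching requires the homogeneity to hold for the whole one-parameter group with $p_0\neq 0$ makes precise a point the paper leaves implicit.
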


\begin{remark}
Obviously, an interesting special case is the case, where $T(t) \equiv I$ for all $t$.
The formula above for $t_0$ shows that this is equivalent to $k = n$. But in this case 
the substitution $dw = z^n dz$  reduces this special case to the one, where the normalized 
potential is constant.  
We will discuss this case in more detail at the end of this paper.
\end{remark}

An inspection of the homogeneity condition shows that this transformation describes a behaviour which is very different from the behaviour under an \emph{extrinsic symmetry}, i.e. one induced by some isometry of $\C P^2$. The transformation behaviour stated by the homogeneity condition is therefore also called \emph{intrinsic symmetry}.

However,  if we can find a $\hat{t}$, for which we obtain $q(\hat{t}) = 1$, then the homogeneity condition describes an extrinsic symmetry for the fixed value $\hat{t}$ of the variable $t$.

Frequently, radially symmetric surfaces admit a finite set of extrinsic symmetries.

\begin{corollary} \label{outersymmetry}
With the notation of the theorem above we put $\hat{t}  = 2 \pi / {q_0}$, so that $q(\hat{t}) = 1$.
Then  

a) $\hat{t}   = \frac{6 \pi}{(2 k + n + 3) p_0}.$ 

b) The exponent of $p(\hat{t})$ is $\frac{6 \pi i}{2k + n + 3}$.  

c)  The exponent of $\tau(\hat{t})$ is $\frac{6 \pi (k-n) i}{2 k + n + 3}$. 

In particular, the transformations $p(\hat{t}) $ and $T(\hat{t})$ have finite order such that
\begin{equation}
\eta(p(\hat{t})z, \lambda) = T(\hat{t}) \eta(z,\lambda) T(\hat{t})^{-1}
\end{equation}
for all $z \in \C^* $  and $\lambda \in S^1$.
\end{corollary}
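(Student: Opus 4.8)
The plan is to obtain the entire statement by direct substitution into the two relations established in the theorem immediately preceding, namely $q_0 = \tfrac{1}{3}(2k+n+3)p_0$ and $t_0 = \tfrac{1}{3}(k-n)p_0$; no new analysis is required, so this is purely an arithmetic corollary of the homogeneity condition \eqref{genequ2}.

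First I would unwind the defining requirement $q(\hat t)=1$. Since $q_t\lambda = e^{iq_0 t}\lambda$, asking $q(\hat t)=1$ means $e^{iq_0\hat t}=1$, and taking the least positive solution gives $\hat t = 2\pi/q_0$. Substituting the theorem's value of $q_0$ then yields part a), $\hat t = 6\pi/\bigl((2k+n+3)p_0\bigr)$, once the factor $\tfrac{1}{3}$ is cleared.

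Next, for parts b) and c) I would evaluate the two one-parameter exponents at $t=\hat t$. The transformation $p_t$ multiplies $z$ by $e^{ip_0 t}$, so the exponent of $p(\hat t)$ is $ip_0\hat t$; inserting $\hat t$ from part a) cancels $p_0$ and leaves $\tfrac{6\pi i}{2k+n+3}$. Similarly $\tau=e^{it_0 t}$, so the exponent of $\tau(\hat t)$ is $it_0\hat t$, and substituting $t_0=\tfrac13(k-n)p_0$ together with the value of $\hat t$ produces the exponent claimed in c). These are the only computations involved.

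Finally I would dispatch the two concluding assertions. Each exponent above is of the form $2\pi i\,r$ with $r\in\Q$ (indeed $r=3/(2k+n+3)$ for $p(\hat t)$, and a corresponding rational for $\tau(\hat t)$, since $k,n$ are integers and $2k+n+3>0$); hence $p(\hat t)$ and $\tau(\hat t)$ are rotations through rational multiples of $2\pi$, so $p(\hat t)$ and $T(\hat t)=\mathrm{diag}(\tau(\hat t),\tau(\hat t)^{-1},1)$ have finite order. The displayed covariance identity then follows by specializing \eqref{genequ2} to $t=\hat t$: since $q(\hat t)=1$ forces $q_{\hat t}\lambda=\lambda$, the loop argument on the left collapses and $\eta(p_{\hat t}z,q_{\hat t}\lambda)=T(\hat t)\eta(z,\lambda)T(\hat t)^{-1}$ becomes exactly $\eta(p(\hat t)z,\lambda)=T(\hat t)\eta(z,\lambda)T(\hat t)^{-1}$. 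I do not anticipate any genuine obstacle here; the content is a chain of substitutions, and the only points demanding a moment's care are recognizing the exponents as rational multiples of $2\pi i$ so as to conclude finiteness of order, and applying the $q(\hat t)=1$ cancellation correctly in the $\lambda$-slot.
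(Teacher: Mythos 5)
Your approach is the right one and is the only one available: the paper gives no separate argument for this corollary, treating it exactly as you do, as a chain of substitutions of $q_0=\tfrac13(2k+n+3)p_0$ and $t_0=\tfrac13(k-n)p_0$ into $\hat t=2\pi/q_0$, followed by the observation that exponents of the form $2\pi i\,r$ with $r\in\Q$ give finite order, and the specialization of \eqref{genequ2} at $t=\hat t$ using $q(\hat t)=1$. Parts a) and b), the finite-order conclusion, and the covariance identity are all handled correctly.

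There is, however, one place where you assert agreement without actually doing the computation, and it matters. For c) you write that substituting $t_0=\tfrac13(k-n)p_0$ and $\hat t$ ``produces the exponent claimed in c)''. It does not: $i t_0\hat t = i\cdot\tfrac{(k-n)p_0}{3}\cdot\tfrac{6\pi}{(2k+n+3)p_0}=\tfrac{2\pi(k-n)i}{2k+n+3}$, which differs by a factor of $3$ from the stated $\tfrac{6\pi(k-n)i}{2k+n+3}$. The statement in the corollary appears to have dropped the $\tfrac13$ from $t_0$; your method, carried out honestly, would have exposed this. The discrepancy is harmless for the ``in particular'' conclusions (the exponent is still $2\pi i$ times the rational $\tfrac{k-n}{2k+n+3}$, so $\tau(\hat t)$ and hence $T(\hat t)$ still have finite order), but a proof that claims to verify c) by substitution must either produce the stated value or flag that it cannot. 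As written, your paragraph papers over the one nontrivial point in the corollary.
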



\subsection{ The transformation formulas for entire radially symmetric minimal Lagrangian surfaces}

The case, where there is  a pole at $z=0$, has been treated in some cases (different from minimal Lagrangian surfaces), \cite{DoGuRo}, \cite{Guest-Lin-Its}.
It is beyond this paper to generalize the work mentioned just above.
Therefore, for the rest of this paper we will restrict to holomorphic normalized potentials defined on  $\C$, in particular defined at $z=0$.

Using that the $(13)$-entry of the normalized potential of a minimal Lagrangian immersion never vanishes (as pointed out above again), we infer

\begin{lemma}\label{lemma7.4}
If $\eta$ is the normalized potential of a  minimal Lagrangian surface defined on all of $\C$ satisfying the homogeneoeity condition \eqref{genequ2}, then all 
entries of $\eta$ are  complex multiples of a power of $z$ and  the surface 
constructed from $\eta$ is an immersion on $\C^*$. 
Moreover,  writing the $(13)$-entry of $\eta$ in the form
$i a_kz^k$, it follows that the surface constructed from $\eta$ is also an immersion at $z = 0$ if and only if $k=0$ and $a_{k} \neq 0$.
\end{lemma}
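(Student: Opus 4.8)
The plan is to reduce the assertion about immersions to a statement about the vanishing of a single matrix entry, and then to read off that entry from the homogeneous form of $\eta$ already established. First I would record the preliminary reductions. Since $\eta$ is holomorphic on all of $\C$, the preceding theorem shows that every entry of $\eta$ is a constant times a power of $z$, with the $(13)$-entry of the form $ia_kz^k$ where $k\geq0$ and $a_k\neq0$. Solving $dC=C\eta$ with $C(0,\lambda)=I$ then produces an entire map $C:\C\to\Lambda SL(3,\C)_\sigma$; because $\eta=\lambda^{-1}A_{-1}dz$ carries only the power $\lambda^{-1}$, the solution stays in $\Lambda^{-}_{*}SL(3,\C)_\sigma$, so $C=\F_-$ and the singular set $S$ is empty. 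The global Iwasawa decomposition $C=\F W_+$ yields a real-analytic extended frame $\F:\C\to\Lambda SU(3)_\sigma$, and the constructed map is $f=[\F e_3]$ with horizontal lift $\f=\F e_3$.

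Next I would characterise the immersion property infinitesimally. From $\F^{-1}\F_z=\lambda^{-1}U_{-1}+U_0$, together with the facts that $U_{-1}\in\mathfrak{g}_5^{\mathbb C}$ has third column $((U_{-1})_{13},0,0)^t$ while $U_0e_3=0$, one computes $\f_z=\lambda^{-1}(U_{-1})_{13}\,\F e_1$. Since $\F e_1\perp\F e_3$ are unit vectors of the orthonormal frame, $f$ is an immersion at a point exactly when $(U_{-1})_{13}\neq0$ there, and this is automatically finite because $\F$ is real-analytic on $\C$; indeed $(U_{-1})_{13}=ie^{u/2}$ encodes the conformal factor $e^u$. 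It then remains to relate $(U_{-1})_{13}$ to the data of $\eta$. Because $S=\emptyset$, the Birkhoff factorisation $\F=\F_-\F_+$ holds on all of $\C$ with $\F_+=\F_0(I+\lambda\F_1+\cdots)$ and $\F_0=\mathrm{diag}((\F_0)_{11},(\F_0)_{11}^{-1},1)$ invertible, so the computation behind Wu's formula gives $A_{-1}=\F_0U_{-1}\F_0^{-1}$, whence $ia_kz^k=(A_{-1})_{13}=(\F_0)_{11}\,(U_{-1})_{13}$.

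From this identity the conclusion is immediate. On $\C^{*}$ we have $z^k\neq0$, $a_k\neq0$ and $(\F_0)_{11}\neq0$, so $(U_{-1})_{13}\neq0$ and $f$ is an immersion, which gives the first assertion. At $z=0$ one has $(\F_0)_{11}(0)=1$, so $(U_{-1})_{13}(0)=ia_k\cdot 0^{k}$, which is nonzero precisely when $k=0$ (and then equals $ia_0\neq0$); thus $f$ is an immersion at the origin if and only if $k=0$ and $a_k\neq0$. The necessity half can alternatively be read off directly from the quoted fact that the $(13)$-entry of the normalized potential of a minimal Lagrangian immersion never vanishes: were $f$ an immersion near $0$ with $k\geq1$, its normalized potential—namely $\eta$—would have the vanishing $(13)$-entry $ia_kz^k$ at $0$, a contradiction. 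The only point requiring genuine care, rather than formula bookkeeping, is the global validity of the Birkhoff splitting of $\F$ (equivalently $S=\emptyset$) together with the diagonal form and invertibility of $\F_0$; both follow from the holomorphicity of $\eta$ and the normalizations fixed in the derivation of Wu's formula, so no essential obstacle arises.
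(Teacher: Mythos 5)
Your proof is correct and follows essentially the same route as the paper, which justifies the lemma in a single line by appealing to the preceding homogeneity theorem for the form \eqref{etarad} and to the fact (from Wu's formula) that the $(13)$-entry of the normalized potential never vanishes at an immersion point. Your write-up merely fills in the details the paper leaves implicit — the global Iwasawa/Birkhoff splitting for a holomorphic $\eta$ and the identity $A_{-1}=\mathbb{F}_0U_{-1}\mathbb{F}_0^{-1}$ relating $ia_kz^k$ to the conformal factor — and does so correctly.
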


\begin{definition}
In what follows we will call surfaces defined on all of $\C$ \emph{entire}. The surfaces considered in Lemma   \ref{lemma7.4} thus will be called \emph{entire radially symmetric surfaces} and, in case they are globally immersions, \emph{entire radially symmetric immersions.}
\end{definition}

\begin{remark}
Applying Corollary \ref{outersymmetry} to entire radially symmetric 
immersions for which the cubic form is constant, we observe that then $k = n = 0$ and $p(\hat{t}) = 1$ and  $T(\hat{t}) = I$.
\end{remark}

For an entire radially symmetric surface  we  choose $z=0$ as a base point and solve the ODE $dC = C \eta$ with initial condition $C(0,\lambda) = I$ on  $\C$. 
The (unique) Iwasawa decomposition $C = \hat{\mathbb{F}} V_+$ then yields a frame which is globally defined on $\C$. 

Moreover, induced from the homogeneity condition of the normalized potential,
 $C$ satisfies the transformation formula
\begin{equation} \label{homogC}
 C(pz,q\lambda) = T C(z,\lambda) T^{-1},
 \end{equation}
and $\hat{\mathbb{F}}$, obtained from $C$ via Iwasawa decomposition  satisfies
\begin{equation} \label{homogF}
 \hat{\mathbb{F}}(pz,q\lambda) = T \hat{\mathbb{F}} (z,\lambda) T^{-1}. 
 \end{equation}
Since we have chosen the unique Iwasawa splitting, we have 
 $\hat{\mathbb{F}} (0,\lambda) =I$ and  $V_+ (0,\lambda) =I$ and also
 \begin{equation} \label{homogVplus}
 V_+(pz,q\lambda) = T V_+ (z,\lambda) T^{-1}.
 \end{equation}

Moreover, from (\ref{homogF}) we obtain for the Maurer-Cartan form $\hat{\alpha}$ 
of $\hat{\mathbb{F}}$:
\begin{equation} \label{homogalpha}
\hat{\alpha}(p_t z, q_t\lambda)=T\hat{\alpha}(z,\lambda)T^{-1},
\end{equation}
where $p,q$ and $T$ are as for $\eta$.

Note  that any two frames are gauge equivalent by some element in $K$.  
Hence, at all immersion points of $f$ we have  
$\hat{\mathbb{F}} (z,\lambda)  = \mathbb{F} (z,\lambda) k(z)$, where $\hat{\mathbb{F}}$ is as above and $\mathbb{F}$ denotes our standard normalized frame of the associated minimal Lagrangian surface.
From the Maurer-Cartan form $\alpha$ of $\mathbb{F}(z, \lambda) $ and also from
the one of 
$ \hat{\mathbb{F}} (z,\lambda)$ one can read off the metric by taking absolute values
 of the $(13)$-entry.  The matrix $k$ then has as first diagonal entry the inverse of the phase factor of the  $(13)$-entry.  From this  the Hopf differential can be computed.

\begin{theorem}
If  the normalized potential for the entire minimal Lagrangian surface
$f:\C \rightarrow \mathbb{C} P^2$ satisfies the homogeneity condition, then the metric only depends on the radius and the Hopf differential $\psi$ is of the form $\psi(z) = \psi_0 z^l$ 
with $ l = 2k + n$  and $\psi_0$ a complex number.
\end{theorem}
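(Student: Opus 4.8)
The plan is to compare the homogeneity-invariant frame $\hat{\mathbb{F}}$ with the standard normalized frame $\mathbb{F}$ and to read off the geometric data $u$ and $\psi$ directly from the off-diagonal entries of the Maurer--Cartan forms. First I would use the gauge relation $\hat{\mathbb{F}}(z,\lambda) = \mathbb{F}(z,\lambda)k(z)$ recorded above, with $k(z) = \mathrm{diag}(e^{i\phi}, e^{-i\phi}, 1) \in K = U(1)$. Since $k$ is diagonal, writing $\hat\alpha = k^{-1}\alpha k + k^{-1}dk$ with $\alpha = \mathbb{F}^{-1}d\mathbb{F}$ as in \eqref{eq:mathbbF} shows that $k^{-1}dk$ is diagonal, so the off-diagonal entries of $\hat\alpha$ are just phase rotations of those of $\alpha$. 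Reading off the shapes of $U_{-1},U_0,V_1,V_0$, the $(13)$- and $(21)$-entries involve only the $dz$-part, namely $\hat\alpha_{13} = e^{-i\phi}\, i e^{u/2}\lambda^{-1}dz$ and $\hat\alpha_{21} = e^{2i\phi}(-i\psi e^{-u})\lambda^{-1}dz$.

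Next I would substitute these into the homogeneity relation \eqref{homogalpha}, $\hat\alpha(p_tz, q_t\lambda) = T\hat\alpha(z,\lambda)T^{-1}$ with $T = \mathrm{diag}(\tau,\tau^{-1},1)$, keeping in mind that pulling back $dz$ under $z \mapsto p_tz$ contributes a Jacobian factor $p = e^{ip_0t}$, that the coefficients are evaluated at $(p_tz,q_t\lambda)$, and that the loop factor enters through $\lambda^{-1}\mapsto (q\lambda)^{-1}$. Conjugation by $T$ multiplies the $(13)$-entry by $\tau$ and the $(21)$-entry by $\tau^{-2}$. The $(13)$-equation then reads $pq^{-1}e^{-i\phi(pz)}e^{u(pz)/2} = \tau e^{-i\phi(z)}e^{u(z)/2}$; taking absolute values (all of $p,q,\tau,e^{-i\phi}$ lie in $S^1$) annihilates every phase and gives $u(pz) = u(z)$ for all $t$. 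Assuming $p_0 \neq 0$ (otherwise the homogeneity condition is vacuous), this means $u(e^{i\theta}z) = u(z)$ for all $\theta$, so $u$, and hence the metric $g = 2e^u dzd\bar z$, depends only on $|z|$.

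For the Hopf differential I would eliminate $\phi$ and $u$ by combining the two equations: the $(13)$-equation (together with $u(pz)=u(z)$) yields $e^{2i\phi(pz)} = \tau^{-2}q^{-2}p^2 e^{2i\phi(z)}$, and feeding this into the $(21)$-equation collapses everything to $\psi(pz) = q^3 p^{-3}\psi(z)$. Invoking part (a) of the preceding theorem, $q_0 = \frac{1}{3}(2k+n+3)p_0$, gives $q^3p^{-3} = p^{2k+n}$, hence $\psi(e^{i\theta}z) = e^{i(2k+n)\theta}\psi(z)$ for all $\theta$. Since $\psi$ is holomorphic by the Codazzi equation $\psi_{\bar z}=0$, comparing the coefficients of its power series expansion forces every coefficient except the one in degree $l = 2k+n$ to vanish, so $\psi(z) = \psi_0 z^{l}$. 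The main obstacle I expect is bookkeeping rather than conceptual: correctly tracking the three competing $S^1$-factors (the Jacobian $p$, the loop-scaling $q$, and the conjugation factor $\tau$) across the two entries, and verifying that the unknown phase $\phi$ genuinely cancels when the equations are combined; once that cancellation is confirmed, the radial dependence of the metric and the monomial form of $\psi$ follow at once.
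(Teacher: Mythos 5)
Your proposal is correct and follows essentially the same route the paper sketches in the paragraph immediately preceding the theorem: propagate the homogeneity condition to the Maurer--Cartan form of the globally defined Iwasawa frame, read off $e^{u/2}$ as the modulus of the $(13)$-entry (which kills all the $S^1$-factors and gives radial dependence of the metric), and recover $\psi$ from the $(21)$-entry after eliminating the gauge phase $k$. The paper leaves this computation implicit (and had already obtained the monomial form of $\psi$ via Wu's formula in \eqref{cubicrad}), but your bookkeeping of $p$, $q$ and $\tau$ is correct and yields exactly $\psi(p_t z)=p_t^{2k+n}\psi(z)$, whence $\psi=\psi_0 z^{2k+n}$ by holomorphy.
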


Since for the surface we have $f =  [\f] ,$ where $\f$ is the last column of
 $\F(z, \lambda)$, we obtain for the associated family of the surface constructed from $\eta$ the transformation formula:
 \begin{equation} \label{radialfpos}
  f(pz,q\lambda) = [T] f(z,\lambda).
  \end{equation}
 Note that this transformation formula is different from the one occurring for extrinsic symmetries.
 
More generally, one can consider minimal Lagrangian surfaces, 
 say defined on $\C$,  for which the associated family satisfies the transformation formula
 \begin{equation} \label{radialfgen}
 f(pz,q\lambda) = \mathcal{G}(t,\lambda) f(z,\lambda)
 \end{equation}
for some isometry $ \mathcal{G}(t,\lambda) \in PSU_3$.
In this paper we will only consider the condition \eqref{radialfpos} and call the surface \emph{radially symmetric}.

\subsection{Entire surfaces satisfying 
$ f(p_t z,q_t \lambda) = [T_t] f(z,\lambda)$ are entire radially symmetric}
Entire radially symmetric surface means satisfying  \eqref{radialfpos} and defined on $\C$.
Converse to the above discussion and in particular to Lemma \ref{lemma7.4} we prove
\begin{theorem}\label{Th7.6}
Let  $f:\C \rightarrow \mathbb{C} P^2$ be a  minimal Lagrangian immersion
satisfying the transformation formula \eqref{radialfpos}. So $f$ is radially symmetric by the above definition. Furthermore, the frame $\mathbb{F}$ is  globally smooth 
and such that \eqref{homogF} holds and its normalized potential satisfies the homogeneity condition.
Moreover, the normalized potential is defined on all of $\C$ and has the form 
\eqref{etarad} with $k=0$ and $a_{0}\neq 0$. 
\end{theorem}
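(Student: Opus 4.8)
The plan is to transport the transformation law \eqref{radialfpos} from the immersion up to its frame, read off the homogeneity of the normalized potential by a Birkhoff‑decomposition argument, and then invoke the structural results already proved in this section to pin down the precise shape \eqref{etarad} with $k=0$. First I would record the easy points: since $f$ is a minimal Lagrangian immersion on the contractible domain $\C$, it has a global smooth horizontal lift $\f:\C\to S^5(1)$ (unique up to a constant in $S^1$), hence a global smooth extended frame $\mathbb{F}:\C\to\Lambda SU(3)_\sigma$; because $f$ is immersed at every point, including $0$, the frame is nondegenerate everywhere and may be normalized to $\mathbb{F}(0,\lambda)=I$ by Proposition \ref{prop:frame}. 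This already yields the asserted global smoothness of $\mathbb{F}$.

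The central step is to promote \eqref{radialfpos} to a frame identity. For each fixed $\lambda$ the maps $z\mapsto f(pz,q\lambda)$ and $z\mapsto[T]f(z,\lambda)$ coincide, while $z\mapsto\f(pz,q\lambda)$ and $z\mapsto T\f(z,\lambda)$ are horizontal lifts of them (horizontality is preserved by the holomorphic reparametrization $z\mapsto pz$ and by the unitary $T$). By uniqueness of the horizontal lift there is $\delta(\lambda)\in S^1$ with $\f(pz,q\lambda)=\delta(\lambda)\,T\,\f(z,\lambda)$. Differentiating in $z$ and $\bar z$, using $|p|=|q|=1$, the unitarity of $T$, and the fact that the members of the associated family share the same induced metric factor $u$ (so that $u(pz)=u(z)$, as one checks by taking Hermitian norms in \eqref{fconf1}), one rebuilds the columns of the frame via \eqref{F} and obtains
\begin{equation*}
\mathbb{F}(pz,q\lambda)=\delta(\lambda)\,T\,\mathbb{F}(z,\lambda)\,k_t,\qquad k_t=\mathrm{diag}\!\left(e^{i(q_0-p_0)t},\,e^{i(p_0-q_0)t},\,1\right)\in K.
\end{equation*}
Taking determinants forces $\delta(\lambda)^3=1$; as $\delta$ is continuous in $\lambda$ on the connected circle it is a constant cube root of unity, and evaluating at $z=0$ (where $\mathbb{F}(0,\cdot)=I$) gives $\delta\equiv1$.

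With this identity in hand I would perform the Birkhoff splitting $\mathbb{F}=\mathbb{F}_-\mathbb{F}_+$. Since $T$ is a constant element of $\Lambda SU(3)_\sigma$, conjugation by $T$ preserves both $\Lambda^{-}_{*}SL(3,\C)_\sigma$ and $\Lambda^{+}SL(3,\C)_\sigma$, so uniqueness of the Birkhoff decomposition yields $\mathbb{F}_-(pz,q\lambda)=T\,\mathbb{F}_-(z,\lambda)\,T^{-1}$; forming Maurer–Cartan forms gives exactly the homogeneity condition \eqref{genequ2} for $\eta=\mathbb{F}_-^{-1}d\mathbb{F}_-$ (the Jacobian of $z\mapsto pz$ is precisely what produces the coefficient relations following \eqref{genequ2}). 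Feeding $\eta$ into the classification theorem for homogeneous normalized potentials established earlier in this section shows $\eta$ is defined on $\C^{*}$ with the monomial form \eqref{etarad}; since the base point $0$ lies in the big Birkhoff cell, $\eta$ is finite at $0$, so all exponents are nonnegative and $\eta$ extends holomorphically over $\C$. Because $f$ is an immersion at $0$, Lemma \ref{lemma7.4} then forces the exponent of the $(13)$-entry to be $k=0$ with $a_0\neq0$, and the normalization \eqref{specific} finishes the description. Finally, \eqref{homogF} for the Iwasawa frame follows from the forward argument: solving $dC=C\eta$, $C(0,\lambda)=I$, homogeneity of $\eta$ gives $C(pz,q\lambda)=T\,C(z,\lambda)\,T^{-1}$ by ODE uniqueness, cf. \eqref{homogC}, and Iwasawa splitting (which commutes with conjugation by the constant unitary $T$) delivers \eqref{homogF}.

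The main obstacle is the middle step, converting the pointwise, $\lambda$-rescaling identity \eqref{radialfpos} into a clean frame identity. The delicate bookkeeping is: \emph{(i)} confirming that the metric factor is genuinely invariant under $z\mapsto pz$ despite the simultaneous change $\lambda\mapsto q\lambda$ to a different member of the associated family; \emph{(ii)} pinning down the correct right $K$-gauge $k_t$ coming from the Jacobian of the rotation; and \emph{(iii)} showing the horizontal-lift phase $\delta(\lambda)$ is a genuine constant cube root of unity that normalizes away. Once the frame identity is clean, the passage to the potential and the identification of its shape are essentially forced by uniqueness of the loop-group decompositions together with the results already established in this section.
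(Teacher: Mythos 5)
Your proposal is correct and follows essentially the same route as the paper: establish the frame transformation law $\mathbb{F}(pz,q\lambda)=T\mathbb{F}(z,\lambda)T^{-1}$, pass through the unique Birkhoff decomposition to get $\mathbb{F}_-(pz,q\lambda)=T\mathbb{F}_-(z,\lambda)T^{-1}$ and hence the homogeneity condition for $\eta$, then invoke the earlier structural results (smoothness of the splitting at the base point and Lemma \ref{lemma7.4}) to force the form \eqref{etarad} with $k=0$, $a_0\neq 0$. The only cosmetic difference is that you derive the frame identity by lifting the symmetry through the horizontal lift and tracking the phase $\delta(\lambda)$ and gauge $k_t$, whereas the paper first pins down $u=u(r^2)$ and $\psi=\psi_0 z^m$ via the Tzitzeica equation and then reads the identity off the Maurer--Cartan form; both arrive at the same intermediate statement.
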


\begin{proof}
We will carry out the proof in several steps: 

{\bf Step 1:} We start from some minimal Lagrangian surface defined on  $\C$ satisfying 
 $$f(pz,q\lambda) = [T] f(z,\lambda)$$ 
 with $p$, $q$, $T$ as above.
 Then we consider a horizontal lift $\f$ of $f$ and form the frame $\mathbb{F}$ and can assume that it attains the value $I$ at $z=0$. 
It is straightforward to verify now that the metric induced by $f$ only depends on $r^2$ and it is the restriction of a never vanishing real analytic map defined on $\R^2$.
 Let $\alpha$ denote the Maurer-Cartan form \eqref{eq:mathbbF}
  of $\mathbb{F}$ and observe that we know its entries.
Thus the frame is smooth.
 
 {\bf Step 2:}  We need to determine the cubic form in more detail.
 For this we observe that the Tzitzeica equation, written relative to polar coordinates, 
 only contains the metric, respectively its exponent actually, $u(r^2)$, and ${|\psi(z)|}^2$.
 As a consequence, $|\psi(z)|^2$ only depends on $r^2$. Now holomorphicity of $\psi$
 shows that therefore $\psi$ is of the form $\psi(z) = \psi_0 z^m$ for some non-negative integer $m$.
 
{\bf Step 3:} Next we choose one-parameter groups $p_t,q_t,T_t$ of the form as before, but with still unknown parameters $p_0$, $q_0$ and $t_0$.
 Then we write out $\alpha(pz, q\lambda)$ and $T \alpha(z,\lambda)T^{-1}$. By inspection of these two expressions it is easy to verify that given $p_0$ one can choose $q_0$ and $t_0$
 such that (also using the normalization $I$ at the base point $z =0$)
  $$\mathbb{F}(\f)(pz,q\lambda) = T \mathbb{F}(\f)(z,\lambda) T^{-1}$$
 holds.

 {\bf Step 4:} Now we perform the (unique, where possible,) Birkhoff decomposition 
 $ \mathbb{F} = \mathbb{F}_- V_+$ with $\mathbb{F}_-  = I + \mathcal{O}(\lambda^{-1})$ and conclude (since $T$ is unitary and also in $\Lambda^+ SL(3,\C)$)
 \begin{equation}\label{transformF-}
 \mathbb{F}_- (pz,q\lambda) = T\mathbb{F}_- (z, \lambda) T^{-1}.
 \end{equation}
Since we have chosen $z=0$ as a base point, the frame $\F$ is smooth around $z=0$ and attains the value $I$ there. Therefore the Birkhoff decomposition is analytic in a
 neighbourhood of $z=0$ and, as a consequence, the normalized potential is smooth at $z=0$.
Equation \eqref{transformF-} implies that the normalized potential, i.e. the Maurer-Cartan form $\eta $ of  
$ \mathbb{F}_- $   satisfies the homogeneity condition
\begin{equation*}
\eta(p_t z, q_t \lambda)=T\eta(z,\lambda)T^{-1}.
\end{equation*}
This finishes the proof of 
the theorem. 
\end{proof}

\subsection{Properties of metric and cubic form of an entire radially symmetric minimal Lagrangian surface}

In the last subsection we have seen,  how one can go from entire radially symmetric surfaces to
 normalized potentials satisfying a homogeneity condition and that  the converse procedure also holds.
 Moreover,  we have stated the transformation behaviour of frames and potentials in each step.

In this subsection we will  discuss how the metric and the cubic form 
characterize entire radially symmetric surfaces.

Theorem \ref{Th7.6} shows

\begin{corollary}
If $f: \C \rightarrow \C P^2$ is an entire radially symmetric immersion, i.e., $f$ satisfies \eqref{radialfpos}) then its cubic form 
$\Psi = \psi dz^3$ is of the form $\psi = \psi_0 z^{m}$, where we can assume 
without loss of generality $\psi_0 < 0$ and that $m$ is a non-negative integer. Moreover,  its metric only depends on $r^2$.

\end{corollary}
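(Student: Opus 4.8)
The plan is to read this off almost directly from Theorem \ref{Th7.6}, which has already done the structural work; what remains is to translate its conclusions about the normalized potential into statements about the metric and the cubic form. First I would invoke Theorem \ref{Th7.6} to conclude that the frame $\mathbb{F}$ is globally smooth, that its normalized potential $\eta$ is defined on all of $\C$ and has the form \eqref{etarad} with $k = 0$ and $a_0 \neq 0$, and that $\eta$ satisfies the homogeneity condition.

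For the assertion about the metric I would argue from the equivariance \eqref{radialfpos}. Since $p_t z = e^{ip_0 t}z$ acts as a rotation about the origin, since $[T_t] \in PSU_3$ is an isometry of $\C P^2$, and since the induced metric $g = 2e^u\,dz\,d\bar z$ is the same for all members of the associated family (the factor $e^{u/2}$ being the $\lambda$-independent part of the $(13)$-entry of the Maurer-Cartan form), pulling the Fubini-Study metric back through \eqref{radialfpos} gives $p_t^* g = g$ for every $t$; as $p_t$ sweeps out all rotations this forces $e^u$ to depend on $r^2 = z\bar z$ alone. This is precisely what Step 1 of the proof of Theorem \ref{Th7.6} records.

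For the cubic form I would use \eqref{cubicrad}, which expresses $\psi$ in terms of the potential coefficients by way of Wu's formula: with $k = 0$ it reads $\psi(z) = \psi_0 z^{n}$, and since $\eta$, hence $\psi$, is holomorphic at $z = 0$ we have $n \geq 0$, so $m = n$ is a non-negative integer. As an independent check one can instead insert $u = u(r^2)$ into the Gauss (Tzitzeica) equation \eqref{eq:mLsurfaces} in polar coordinates; this shows $|\psi|^2$ depends only on $r^2$, and combined with the Codazzi equation \eqref{eq:Codazzi} (holomorphicity of $\psi$) it forces $\psi$ to be a single monomial, because rotation invariance of $|\psi|^2$ annihilates every cross term $c_j \overline{c_k}$ with $j \neq k$ in the expansion $\psi(z) = \sum_j c_j z^j$. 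The sign normalization $\psi_0 < 0$ is then exactly the content of Lemma \ref{normalized apsi}, achieved by conjugating with a diagonal matrix with entries in $S^1$ and rescaling the coordinate $z$.

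Because all the substantial arguments are already in place, I do not expect a genuine obstacle; the only step deserving a moment's care is the deduction that a holomorphic $\psi$ whose modulus is rotation invariant must be a monomial, which is elementary but is the one place where holomorphicity \eqref{eq:Codazzi} and the rotational symmetry of the metric are genuinely combined.
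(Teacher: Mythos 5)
Your proposal is correct and takes essentially the same route as the paper: the paper deduces this corollary directly from Theorem \ref{Th7.6}, whose proof already contains (in Steps 1 and 2) your two key observations that the induced metric is rotation-invariant and that an entire $\psi$ with rotation-invariant modulus must be a monomial $\psi_0 z^m$, $m\geq 0$. The normalization $\psi_0<0$ is, as you say, exactly the content of Lemma \ref{normalized apsi}.
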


\begin{proposition}
Furthermore,  any minimal Lagrangian surface defined on $\C$ with cubic form 
$\Psi =  \psi_0 z^{m} dz^3,$  $\psi_0 < 0,$ and $m$ a non-negative integer, 
and which has a metric only depending on $r^2$  is entire radially symmetric.
\end{proposition}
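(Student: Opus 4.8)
The plan is to run the argument of Theorem~\ref{Th7.6} in reverse: instead of deducing the metric and cubic form from radial symmetry, I would start from the prescribed metric and cubic form and recover the transformation law \eqref{radialfpos}. First I would fix a horizontal lift. Since $f$ is a minimal Lagrangian immersion on $M = \C \neq S^2$, Corollary~\ref{uniquehori} provides a horizontal lift $\f:\C \to S^5(1)$, from which I form the normalized frame $\mathbb{F}$ with $\mathbb{F}(0,\lambda)=I$ and Maurer--Cartan form $\alpha = \mathbb{F}^{-1}d\mathbb{F} = (\lambda^{-1}U_{-1}+U_0)dz + (\lambda V_1 + V_0)d\bar z$ as in \eqref{eq:mathbbF}. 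Because the metric factor $u$ depends only on $r^2 = z\bar z$ and $\psi = \psi_0 z^m$ is holomorphic, the frame is globally smooth on $\C$; the cubic form only vanishes, and never blows up, at $z=0$.

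The heart of the proof is to show that $\alpha$ satisfies the homogeneity condition \eqref{genequ2}. Set $p_t z = e^{ip_0 t}z$, $q_t\lambda = e^{iq_0 t}\lambda$ and $T(t)=\mathrm{diag}(\tau,\tau^{-1},1)$ with $\tau = e^{it_0 t}$, leaving $p_0$ free. Since $|p_t| = 1$, the radial function $u$ is invariant under $z \mapsto p_t z$, so $e^{u/2}$ is unchanged while $u_z = u'(z\bar z)\bar z$ acquires a factor $p_t^{-1}$; and $\psi(p_t z) = p_t^m \psi(z)$. Pulling $\alpha$ back along $(z,\lambda)\mapsto(p_t z, q_t\lambda)$ --- remembering the factor $p_t$ from $dz$ and $q_t$ from $\lambda$ --- and comparing entrywise with $T(t)\,\alpha\, T(t)^{-1}$, the diagonal block matches automatically, since the $p_t$ from $dz$ cancels the $p_t^{-1}$ from $u_z$ and $T(t)$ is diagonal. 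The $(13)$ and $(32)$ entries force $\tau = q_t^{-1}p_t$, while the $(21)$ entry forces $\tau^{-2} = q_t^{-1}p_t^{m+1}$. Solving the resulting linear system yields $q_0 = \tfrac13(m+3)p_0$ and $t_0 = -\tfrac{m}{3}p_0$, in agreement with the formulas derived earlier (with $k=0$ and $n=m$). The $d\bar z$-block then satisfies the same relations automatically, because the reality condition $\alpha \in \Lambda \mathfrak{su}(3)_\sigma$ makes its coefficient $-$ the conjugate transpose of the $dz$-coefficient, and $T(t)$ is unitary ($\bar\tau = \tau^{-1}$). Hence $\alpha(p_t z, q_t\lambda) = T(t)\,\alpha(z,\lambda)\,T(t)^{-1}$.

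Finally I would integrate and project. The two maps $\mathbb{F}(p_t z, q_t\lambda)$ and $T(t)\,\mathbb{F}(z,\lambda)\,T(t)^{-1}$ both solve the ODE $X^{-1}dX = T(t)\,\alpha\, T(t)^{-1}$ with $X(0,\lambda)=I$, so by uniqueness they coincide, giving $\mathbb{F}(p_t z, q_t\lambda) = T(t)\,\mathbb{F}(z,\lambda)\,T(t)^{-1}$. Taking the last column $\f = \mathbb{F}e_3$ and using $T(t)^{-1}e_3 = e_3$ (the last diagonal entry of $T(t)$ is $1$) gives $\f(p_t z, q_t\lambda) = T(t)\,\f(z,\lambda)$; projecting to $\C P^2$ yields $f(p_t z, q_t\lambda) = [T(t)]\,f(z,\lambda)$, which is precisely \eqref{radialfpos}. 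Since $f$ is defined on all of $\C$, it is entire radially symmetric.

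I expect the only real care to lie in the bookkeeping of the entrywise comparison: keeping track of the pullback factor $p_t$ coming from $dz$ and verifying that a single choice of $(q_0,t_0)$ simultaneously reconciles every off-diagonal entry of both the $dz$- and $d\bar z$-blocks. There is no deep obstruction here --- the argument is essentially the converse of Steps~3--4 in the proof of Theorem~\ref{Th7.6}, and the homogeneity of the normalized potential \eqref{genequ2} would follow as a byproduct via the Birkhoff decomposition, though it is not needed for the conclusion.
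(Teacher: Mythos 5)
Your proof is correct and follows essentially the same route as the paper's (very terse) argument: both reduce to checking that the Maurer--Cartan form satisfies $\alpha(p_t z,q_t\lambda)=T(t)\,\alpha(z,\lambda)\,T(t)^{-1}$ with $q_0=\tfrac13(m+3)p_0$ and $t_0=-\tfrac{m}{3}p_0$, the paper then passing to the normalized potential and its homogeneity condition while you integrate the frame equation directly and read off \eqref{radialfpos}, which is if anything cleaner. The only caveat is that your normalization $\mathbb{F}(0,\lambda)=I$ tacitly assumes $e^{u(0)}>0$, i.e.\ that the surface is genuinely an immersion at $z=0$; the paper's extra gauge by $\mathrm{diag}(z^v/|z|^v,\,|z|^v/z^v,\,1)$ appears to be aimed at accommodating a branch point at the origin, which the notion of ``entire radially symmetric surface'' (as opposed to immersion) also covers.
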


\begin{proof} 
To prove the converse, we consider the Maurer-Cartan form of the given surface and 
one-parameter groups $p_t, q_t$ and $T(t)$ satisfying
such that the Maurer-Cartan form $\alpha$, gauged by some  
$\mathrm{diag} ( \frac{z^v}{|z|^v}, \frac{|z|^v}{z^v}, 1)$,    has the transformation behaviour
 $\alpha (p_t.z, q_t.\lambda) = T(t) \alpha(z,\lambda) T(t)^{-1}$. From this one derives the transformation behaviour for the normalized potential $\eta$ of the given immersion and observes that $\eta$ satisfies the homogeneity condition.
 \end{proof}

\begin{corollary}
Let $\Psi =  \psi_0 z^m dz^3$, $m \geq 0$, $m \in \mathbb{Z}$ be a cubic form defined on $\C$
and let $e^{u}$ be a non-negative  function on $\C$, positive on $\C^*$ and only depending on $r$. 

Assume, moreover, that these functions satisfy the Tzitzeica equation.
Then the standard Maurer-Cartan form built with $\psi = \psi_0 z^m$ and $e^{u(r)}$ produces an entire radially symmetric minimal Lagrangian surface.  
\end{corollary}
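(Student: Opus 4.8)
The plan is to build the loop-parametrized Maurer--Cartan form directly from the prescribed data, integrate it, and read off the surface, isolating the origin as a separate issue. First I would invoke Lemma \ref{normalized apsi} to normalize $\psi_0 < 0$; this costs only a conjugation by a diagonal $S^1$-matrix together with a scaling of $z$, neither of which affects the conclusion. Then I would form the standard $1$-form $\alpha_\lambda = (\lambda^{-1}U_{-1}+U_0)\,dz + (\lambda V_1 + V_0)\,d\bar z$ of \eqref{eq:mathbbF} with entries built from $e^{u(r)}$ and $\psi = \psi_0 z^m$. The flatness of $\alpha_\lambda$ for every $\lambda \in \C^*$ is equivalent, by the computation leading to \eqref{eq:mLsurfaces}--\eqref{eq:Codazzi}, to the Codazzi equation $\psi_{\bar z}=0$ together with the Gauss (Tzitzeica) equation; the former holds since $\psi = \psi_0 z^m$ is holomorphic, and the latter is exactly the hypothesis. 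Thus $\alpha_\lambda$ is a family of flat $\Lambda \mathfrak{su}(3)_\sigma$-connections, and this integrability step is routine.

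Next I would integrate on the punctured plane. Where $e^{u} > 0$ all coefficients of $\alpha_\lambda$ are smooth, so on the universal cover of $\C^*$ the flat connection integrates to an extended frame $\mathbb{F}$, and by Proposition \ref{prop:frame} this frames a minimal Lagrangian immersion with induced metric $2e^{u}\,dz\,d\bar z$ and Hopf differential $\psi_0 z^m\,dz^3$; since these data are coordinate- and gauge-invariant they descend to a minimal Lagrangian surface on $\C^*$. By construction the metric is radial and the cubic form is $\psi_0 z^m\,dz^3$ with $\psi_0 < 0$, so the only remaining point is that the surface actually extends to all of $\C$; once that is secured, the Proposition immediately preceding this Corollary applies and yields entire radial symmetry in the sense of \eqref{radialfpos}.

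The main obstacle is precisely this extension across $z=0$. When $e^{u}(0) > 0$ it is harmless: since $u_z = \bar z\, u'(r^2)$ vanishes at the origin and $\psi_0 z^m e^{-u}$ is smooth there, every coefficient of $\alpha_\lambda$ extends smoothly to $\C$, and flatness on the simply connected domain $\C$ integrates to a global frame, so the surface is an entire immersion. When $e^{u}(0) = 0$, which the hypothesis ``positive on $\C^*$'' explicitly permits, the frame may degenerate at the origin and direct integration of $\alpha_\lambda$ there is no longer available. To sidestep this I would pass to the normalized potential: for a radial metric it takes the form \eqref{etarad}, whose nonzero entries are the monomials $a_k z^k$ and $b_n z^n$, so $\eta$ is holomorphic on all of $\C$ with no pole at the origin. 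Feeding $\eta$ into the loop-group recipe --- solve $dC = C\eta$ with $C(0,\lambda)=I$ globally on $\C$, then perform the (global) Iwasawa decomposition $C = \mathbb{F}V_+$ --- produces a frame defined on all of $\C$, hence an entire surface; the degeneracy then shows up only as a failure of the immersion property at the single point $z=0$, consistently with Lemma \ref{lemma7.4}.

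Finally I would conclude. The construction yields a minimal Lagrangian surface defined on all of $\C$, with radial metric and cubic form $\psi_0 z^m\,dz^3$, $\psi_0 < 0$; by the Proposition immediately preceding this Corollary such a surface is entire radially symmetric. Alternatively, one may verify the homogeneity relation $\alpha(p_t z, q_t \lambda) = T(t)\,\alpha(z,\lambda)\,T(t)^{-1}$ directly from the radial dependence of $u$ and the monomial form of $\psi$, with $q_0$ and $t_0$ read off from $k$ and $n$ as in part a) of the theorem above, and then transport it through the frame and its projection to recover \eqref{radialfpos}. The genuinely delicate point throughout is the behavior at the origin, and the cleanest safeguard is to run the loop-group construction from the holomorphic normalized potential rather than from the Maurer--Cartan form, since the former stays regular at $z=0$ even when the metric does not.
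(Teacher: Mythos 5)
Your proposal is correct and follows essentially the same route as the paper, whose entire proof is that the given data yield a minimal Lagrangian surface on $\C$, possibly singular at $z=0$, to which the immediately preceding proposition applies. You simply fill in the steps the paper leaves implicit — flatness of $\alpha_\lambda$ via Gauss--Codazzi, integration of the frame, and the passage to the holomorphic normalized potential to handle a degenerate origin — which is a faithful (and somewhat more careful) expansion of the same argument.
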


\begin{proof}
The given information yields a minimal Lagrangian surface defined on $\C$, possibly with a singularity at $z=0$. Now the theorem above applies. 
\end{proof}

\subsection{The Painlev\'{e} equation for the metric of entire radially symmetric minimal Lagrangian surfaces}

In this subsection we write out, how the Tzizeica equation for general minimal Lagrangian surfaces specializes to the case of entire radially symmetric minimal Lagrangian surfaces.
First we use the fact that the metric only depends on $r$. Thus we obtain in polar coordinates
\begin{equation} \label{Tzitzeicapolar}
u'' + \frac{1}{r} u' + 4 e^{u} - 4 |\psi|^2 e^{-2u} = 0.
\end{equation}

\begin{remark} For a normalized potential as in (\ref{etarad}) the corresponding radially symmetric surface has as cubic form a  complex multiple of $z^{2k + n}.$ The equation just above thus is the classical Tzizeica equation (in polar coordinates) if and only if $\eta$ is a constant matrix, i.e. $k = n =0$. This was missed to observe in \cite{OK}. 
\end{remark}

Using that the coefficient of the cubic form is a multiple of a power of $z$ we will show that the
equation above can be rewritten (after some substitutions) as a Painlev\'{e} equation:

Consider the function $h(s) = e^{u(r(s))} s^j$ with $s = r^l $. 
Then we also have $h(s(r)) = e^{u(r)} r^{jl}$. Hence 
$$u(r) = \log h(s(r)) - jl \log r.$$

Substituting this into the Tzitzeica equation, written  in polar coordinates,  a straightforward computation yields (with $ h = h(s(r))$ and denoting the derivative for $s$ by  a dot):
$$ \ddot h = \frac{(\dot h)^2}{h} - \frac{1}{r^l} \dot h - \frac{4}{l^2} \frac{h^2}{ r^{2l + jl - 2}}
+  \frac{4 |\psi_0|^2}{l^2} 
\frac{1}{r^{2l -2jl -2 - 4k - 2n}}  
\frac{1}{h},$$
with $\psi=\psi_0 z^{2k+n}$ and $\psi_0\in \mathbb{C}$.

We want to compare this to  the celebrated Painlev\'{e} III equation 
\begin{equation} \label{Painleve}
y^{\prime \prime}(s) = \frac{ y^\prime (s)^2}{y(s)} - \frac{y^\prime (s)}{s} +
\frac{ \alpha y(s)^2 + \beta }{s} + \gamma y(s)^3 +  \frac{\delta }{y(s)}.
\end{equation}

It is easy to verify that in the classification of Okamoto \cite{OKSO} our equation  is of  type $D_7$,
if we  choose the parameters $j$ and $l$ such that
$$ l + jl = 2 \hspace{2mm} \mbox{and} \hspace{2mm} 2l -2jl -2 -4k - 2 n = 0$$
hold, equivalently 
$$ l = \frac{1}{2} (2k + n + 3)  \hspace{2mm} \mbox{and} \hspace{2mm} jl = \frac{1}{2} (1 - 2k - n).$$
Therefore, altogether we obtain:
\begin{theorem} 
The metric of an entire radially symmetric minimal Lagrangian surface 
with metric $e^{u(r)}$ and cubic form $\psi(z) dz^3 = \psi_0 z^{2k + n} dz^3$
only depends on $r$ and satisfies the Painlev\'{e} equation PIII of type $D_7$, with $h = h(s)$ satisfying
\begin{equation} \label{PIII}
 \ddot h = \frac{(\dot h)^2}{h} - \frac{\dot{h}}{s} - \frac{16}{(2k + n +3)^2} \frac{h^2}{ s}
+  \frac{16 |\psi_0|^2}{(2k +n +3)^2} 
\frac{1}{h}.
\end{equation}
\end{theorem}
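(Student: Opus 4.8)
The plan is to reduce the radial Tzitzeica equation to Painlev\'{e} III form by the single change of variables recorded just before the statement, and then to read off the parameters. The starting point is that the metric depends only on $r$, which is already built into the hypothesis $e^{u(r)}$ (and was established for entire radially symmetric surfaces above), so \eqref{Tzitzeicapolar} holds. Into it I would substitute the explicit cubic form, using $\psi = \psi_0 z^{2k+n}$ and hence $|\psi|^2 = |\psi_0|^2 r^{2(2k+n)}$, together with $s = r^l$ and $h(s) = e^{u(r(s))} s^j$, which give $e^{u} = h\, r^{-jl}$ and $u(r) = \log h - jl\log r$.

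The heart of the proof is the chain-rule computation converting $u'' + \frac1r u'$ into an expression in $h$, $\dot h$, $\ddot h$. First I would obtain $u' = \frac{l}{r}\bigl(\frac{s\dot h}{h} - j\bigr)$ from the two expressions $h' = h\,(u' + jl/r)$ and $h' = l r^{l-1}\dot h$. Differentiating once more and adding $\frac1r u'$, the two terms proportional to $jl/r^2$ cancel, leaving
\[
u'' + \frac1r u' = l^2 r^{2l-2}\,\frac{\ddot h\,h - \dot h^2}{h^2} + l^2 r^{l-2}\,\frac{\dot h}{h}.
\]
Substituting this, together with $4e^{u} = 4h\,r^{-jl}$ and $4|\psi|^2 e^{-2u} = 4|\psi_0|^2 h^{-2} r^{2(2k+n)+2jl}$, into \eqref{Tzitzeicapolar} and then solving for $\ddot h$ (that is, multiplying through by $h/(l^2 r^{2l-2})$) reproduces the intermediate ODE displayed before the theorem, carrying the two floating powers $r^{2l+jl-2}$ and $r^{2l-2jl-2-4k-2n}$ in the quadratic and reciprocal terms.

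It then remains to eliminate the explicit $r$-dependence. Imposing $2l+jl-2 = l$ (so the quadratic term becomes $h^2/s$) and $2l-2jl-2-4k-2n = 0$ (so the reciprocal term becomes $1/h$) yields the linear system $l+jl = 2$, $l - jl = 1+2k+n$, whose solution is $l = \frac12(2k+n+3)$ and $jl = \frac12(1-2k-n)$. With this $l$ one has $4/l^2 = 16/(2k+n+3)^2$, producing exactly the coefficients of \eqref{PIII}. Comparing \eqref{PIII} with the general equation \eqref{Painleve}, I read off $\gamma = 0$ and $\beta = 0$, while $\alpha = -16/(2k+n+3)^2$ and $\delta = 16|\psi_0|^2/(2k+n+3)^2$ are nonzero (using $\psi_0 \neq 0$); in Okamoto's classification \cite{OKSO} the vanishing of $\gamma$ and $\beta$ together with $\alpha\delta \neq 0$ is precisely type $D_7$. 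The main obstacle is purely computational, namely tracking the powers of $r$ through the second-derivative calculation and verifying the $jl/r^2$ cancellation; once the intermediate ODE is secured, the exponent matching is elementary linear algebra and the Painlev\'{e} identification is immediate.
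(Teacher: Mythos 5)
Your proposal is correct and follows essentially the same route as the paper: substitute $h(s)=e^{u(r(s))}s^{j}$, $s=r^{l}$ into the polar Tzitzeica equation \eqref{Tzitzeicapolar}, obtain the intermediate ODE with the floating powers $r^{2l+jl-2}$ and $r^{2l-2jl-2-4k-2n}$, and fix $l=\frac12(2k+n+3)$, $jl=\frac12(1-2k-n)$ to land on \eqref{PIII} and the $D_7$ identification. Your chain-rule computation of $u''+\frac1r u'$ and the cancellation of the $jl/r^2$ terms are exactly the details the paper leaves implicit, and they check out.
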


In the discussion of Painlev\'{e} equations the asymptotic behaviour of solutions is of great importance (see, e.g. \cite{FIKN}).
In our setting it is easy to determine the asymptotic behaviour of the  solutions discussed in this section at $z =0$.

\begin{theorem}
Let $f:\C \rightarrow \C P^2$ be an entire radially symmetric minimal Lagrangian surface with metric $e^{u(r)}$ and cubic form $\psi(z) dz^3 = \psi_0 z^{2k + n} dz^3$ and let 
 \begin{equation}\label{substitution}
 h(s) = e^{u(r(s))} s^j,
 \end{equation}
  with $s = r^l $, where 
 $ l = \frac{1}{2} (2k + n + 3)  \mbox{and} \hspace{2mm} jl = \frac{1}{2} (1 - 2k - n)$.
 Then for 
 $s \rightarrow 0$ the function $h(s) $ has the asymptotic behaviour 
\begin{equation} \label{asymph}
\log (h(s) )  \approx    \frac{2k -n + 1}{2k + n + 3} \log s + 2\log |a_k|+o(s). 
\end{equation}
\end{theorem}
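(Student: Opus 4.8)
The plan is to reduce the assertion to the leading behaviour of the conformal factor and then to extract the latter directly from the normalized potential through the Iwasawa splitting. Since $\log h = u + j\log s$ with $s = r^l$, $l = \frac{1}{2}(2k+n+3)$ and $jl = \frac{1}{2}(1-2k-n)$, a direct substitution yields the identity
\begin{equation*}
\log h(s) - \frac{2k-n+1}{2k+n+3}\log s - 2\log|a_k| = u(r) - 2k\log r - 2\log|a_k|.
\end{equation*}
Hence the theorem is equivalent to showing that $u(r) = 2k\log r + 2\log|a_k| + o(1)$ as $r\to 0$, equivalently $e^{u(r)/2}\sim |a_k|\,r^k$. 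I regard the remainder written as $o(s)$ as this vanishing term, which will turn out to be $2\log\rho(r)$ for the function $\rho$ introduced below.

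To read the metric off the potential I would solve $dC = C\eta$ with $C(0,\lambda)=I$ and take the unique Iwasawa decomposition $C = \hat{\mathbb{F}}V_+$ of Theorem \ref{Thm:Iwasawa}, writing $V_+ = V_0 + \lambda V_1 + \cdots$. The $\sigma$-invariance of $V_0 = V_+(z,0)$ forces $V_0 = \di(\rho,\rho^{-1},1)$ with $\rho = \rho(z)>0$, and evaluating the homogeneity relation \eqref{homogVplus} at $\lambda=0$ gives $\rho(pz)=\rho(z)$, so $\rho = \rho(r)$ depends on the radius only. Comparing the coefficient of $\lambda^{-1}$ in
\begin{equation*}
\hat{\mathbb{F}}^{-1}\hat{\mathbb{F}}_z = V_+\bigl(\lambda^{-1}A_{-1}\bigr)V_+^{-1} - \partial_z V_+\,V_+^{-1},
\end{equation*}
where $A_{-1}(z)$ denotes the coefficient matrix in \eqref{etarad}, the lowest-order part equals $V_0 A_{-1}V_0^{-1}$, whose $(13)$-entry is $i a_k z^k\rho$. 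On the other hand $\hat{\mathbb{F}} = \mathbb{F}\,k(z)$ with $k(z)\in U(1)$ diagonal and $\mathbb{F}$ the geometric frame \eqref{eq:mathbbF}, so the $\lambda^{-1}$-part is $k^{-1}U_{-1}k$ and its $(13)$-entry has modulus $e^{u/2}$. Equating moduli gives the key identity
\begin{equation*}
e^{u(r)/2} = |a_k|\,r^k\,\rho(r).
\end{equation*}

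It then remains to show $\rho(r)\to 1$ as $r\to 0$. Here I would use that in the entire case the potential $\eta$ is holomorphic on all of $\C$, so the solution $C$ of $dC=C\eta$ with $C(0,\lambda)=I$ is real-analytic near $z=0$; since the Iwasawa decomposition is a real-analytic diffeomorphism, its factors are smooth near the origin and satisfy $V_+(0,\lambda)=I$, whence $\rho(0)=1$ and $\rho(r)\to 1$. Taking logarithms in the key identity then produces $u(r)=2k\log r+2\log|a_k|+2\log\rho(r)$ with $2\log\rho(r)\to 0$, which is precisely the claimed asymptotics.

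The one genuinely delicate point, and the main obstacle, is exactly this regularity of $V_+$ (hence of $\rho$) at $z=0$ in the case $k>0$, where the surface itself fails to be an immersion: one must argue that the apparent singularity is carried entirely by the unitary phase gauge $k(z)$ (the phase of $z^k$) relating $\hat{\mathbb{F}}$ to $\mathbb{F}$, while $C$ and its Iwasawa factors stay smooth because $\eta$ is finite at $z=0$. This is what legitimises $\rho(0)=1$ and fixes the additive constant as $2\log|a_k|$, a constant which the leading-order balance in the Painlev\'{e} equation \eqref{PIII} alone leaves undetermined.
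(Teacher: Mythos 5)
Your argument is essentially the paper's own proof: both extract the metric from the $\lambda^{-1}$-coefficient of the Iwasawa relation $C=\mathbb{F}V_+$, arriving at the same key identity $e^{u/2}=|a_k|\,r^k\,|v_0|$ (the paper's $a_kz^k=ie^{u/2}v_0^{-1}$, with $v_0$ the leading $(11)$-entry of $V_+$), and both conclude via $v_0(0)=1$ together with the substitution $s=r^l$. Your additional care about the regularity of the Iwasawa factors at $z=0$ when $k>0$ only makes explicit what the paper leaves implicit.
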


\begin{proof}

To understand the behaviour of $h$ near $s=0$ we use one more relation, namely $C = \mathbb{F} V_+.$ Here we can assume without loss of generality that the Maurer-Cartan form of $\mathbb{F}$
has the form \eqref{eq:mathbbF}.
This leads  to the equation 
\begin{equation}\label{eq:7.21}
 a_k z^k = i e^\frac{u}{2} v_0^{-1},
\end{equation}
where $v_0$ is the $\lambda^0-$term of the $11$-entry of $V_{+}$.
Altogether we obtain 
$$h(s) = e^{u(s)} s^j = |v_0 (r(s))|^2 r(s)^{2k} s^j$$
and, since $v_0 (0) = 1$  we obtain (\ref{asymph}).
\end{proof}

The following result clarifies the relation between entire radially symmetric minimal Lagrangian surfaces and solutions to the Painlev\'{e} equation PIII with a certain asymptotics at $z=0$.
In fact, $h$ is the unique solution with the asymptotic behavior  \eqref{asymph}. 

\begin{theorem}
Each entire radially symmetric minimal Lagrangian surface with normalized potential (\ref{etarad}) yields a solution to the third Painlev\'{e} equation (\ref{PIII}) which has the asymptotics (\ref{asymph}) at $z = 0$.

Conversely, assume we have two solutions to  the third Painlev\'{e} equation 
(\ref{PIII}) which has the asymptotics (\ref{asymph}) at $z = 0$. Then these two solutions are equal. 
\end{theorem}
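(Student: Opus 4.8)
The first assertion requires no new work: reversing the change of variables (\ref{substitution}) turns an entire radially symmetric minimal Lagrangian surface with potential (\ref{etarad}) into a solution $u(r)$ of the radial Tzitzeica equation (\ref{Tzitzeicapolar}), the preceding theorem identifying (\ref{Tzitzeicapolar}) with PIII of type $D_7$ shows that $h(s)=e^{u(r(s))}s^{j}$ solves (\ref{PIII}), and the asymptotics theorem shows that this $h$ satisfies (\ref{asymph}). So I would only record this and concentrate on the converse uniqueness claim.

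For the converse the plan is to pull the two given PIII solutions back to the Tzitzeica variable, where the coefficients of the linearized equation become regular, and then run a uniqueness argument at the singular point $r=0$. Let $h_1,h_2$ solve (\ref{PIII}) and satisfy (\ref{asymph}). Reversing (\ref{substitution}), i.e. setting $u_i(r)=\log h_i(s)-jl\log r$ with $s=r^{l}$, the equivalence of ODEs underlying the Painlev\'{e} theorem shows that each $u_i$ solves (\ref{Tzitzeicapolar}) with $|\psi|^2=|\psi_0|^2 r^{2(2k+n)}$. Translating (\ref{asymph}) through the substitution (using $l=\tfrac12(2k+n+3)$ and $jl=\tfrac12(1-2k-n)$) gives $u_i(r)-2k\log r \to 2\log|a_k|$ as $r\to0$ for both $i$; in particular $v:=u_1-u_2=\log h_1-\log h_2\to 0$ as $r\to 0$.

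Next I would linearize. Writing the right-hand side of (\ref{Tzitzeicapolar}) as $G(r,u)=-4e^{u}+4|\psi_0|^2 r^{2(2k+n)}e^{-2u}$ and using the integral mean value theorem, $v$ satisfies the genuinely linear equation
\begin{equation*}
v'' + \tfrac{1}{r}\,v' = \tilde q(r)\,v,
\qquad
\tilde q(r) := \int_0^1 G_u\!\bigl(r,\, u_2 + t(u_1-u_2)\bigr)\,dt ,
\end{equation*}
equivalently $(r v')' = r\,\tilde q(r)\,v$. The crucial point is that $\tilde q$ is \emph{bounded} near $r=0$: since $u_1,u_2$ both behave like $2k\log r+2\log|a_k|$, one has $e^{u_i}\approx |a_k|^2 r^{2k}$ and $r^{2(2k+n)}e^{-2u_i}\approx |a_k|^{-4}r^{2n}$, so $G_u=-4e^{u}-8|\psi_0|^2 r^{2(2k+n)}e^{-2u}$ stays bounded as $r\to0$ for all $k,n\ge 0$. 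This boundedness is exactly what the passage to the Tzitzeica variable buys us, since in the $s$-variable the analogous coefficient need not remain bounded when the exponent in (\ref{asymph}) is negative.

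Finally I would exploit the regular-singular structure at $r=0$. Integrating $(rv')'=r\tilde q v$ and using that $v$ is bounded and $\tilde q$ is bounded near $0$, the right-hand side is integrable, so $\lim_{r\to0} r v'(r)=:B_0$ exists; were $B_0\neq 0$ then $v\sim B_0\log r$ would diverge, contradicting $v\to0$, hence $B_0=0$ and $v(0^+)=0$. This yields the Volterra equation $v(r)=\int_0^r \rho^{-1}\bigl(\int_0^\rho t\,\tilde q(t)\,v(t)\,dt\bigr)\,d\rho$, and with $|\tilde q|\le M$ on $(0,\delta]$ the estimate $\sup_{(0,r]}|v|\le \tfrac{M r^2}{4}\sup_{(0,r]}|v|$ forces $v\equiv 0$ on $(0,\delta]$ once $\tfrac{M\delta^2}{4}<1$. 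Standard uniqueness for the regular linear ODE on $(0,\infty)$ then propagates $v\equiv 0$ to all of $(0,\infty)$, so $u_1=u_2$ and hence $h_1=h_2$. The main obstacle is precisely the verification that $\tilde q$ stays bounded at the singular point, which is where the homogeneity-induced asymptotics (\ref{asymph}) and the choice of the Tzitzeica variable are used; granting that, the indicial/Volterra argument closes the proof.
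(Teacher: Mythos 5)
Your argument is correct, but it is a genuinely different proof from the one in the paper. The paper's proof of the uniqueness part never touches the ODE analytically: it reads the data $2k+n$ and $|\psi_0|$ off the coefficients of \eqref{PIII}, reads $2k-n$ and $|a_k|$ off the asymptotics \eqref{asymph}, concludes $k=\hat k$, $n=\hat n$, $|a_k|=|\hat a_k|$, $|\psi_0|=|\hat\psi_0|$, and then invokes the normalization of Lemma \ref{normalized apsi} to conclude that the two normalized potentials \eqref{etarad} coincide, hence the two surfaces and therefore the two metric functions coincide. That argument is short but implicitly presupposes that \emph{each} of the two given solutions already arises from an entire radially symmetric minimal Lagrangian surface; it establishes uniqueness within the image of the geometric correspondence rather than uniqueness among all solutions of the ODE with the prescribed asymptotics. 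Your proof closes exactly that gap: by pulling both solutions back to the Tzitzeica variable via \eqref{substitution}, linearizing the difference $v=u_1-u_2$ of two solutions of \eqref{Tzitzeicapolar}, verifying from \eqref{asymph} that $u_i-2k\log r\to 2\log|a_k|$ so that the averaged coefficient $\tilde q$ stays bounded at the regular singular point $r=0$ (here $k,n\ge 0$ in the entire setting is what you need and have), and then running the indicial/Volterra contraction at $r=0$ followed by standard uniqueness on $(0,\infty)$, you obtain $v\equiv 0$ directly. What the paper's route buys is brevity and a clean identification of which potential a given solution corresponds to; what your route buys is a self-contained analytic proof of the literal statement of the theorem, valid for any two positive solutions with the stated asymptotics, without assuming they come from surfaces. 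The only points worth recording explicitly are that the solutions must be positive for $u_i=\log h_i-jl\log r$ to be defined, and that \eqref{asymph} is being read as asserting actual convergence of $\log h(s)-\frac{2k-n+1}{2k+n+3}\log s$ to $2\log|a_k|$, which is what forces $v\to 0$ and $\lim_{r\to 0}rv'(r)=0$ in your indicial step.
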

\begin{proof}

We only argue for the second part of the statement. Assume $u$ and $\hat{u}$ are two solutions to the third Painlev\'{e} equation (\ref{PIII}) which has the asymptotics (\ref{asymph}) at $z = 0$. Then 
from the Painlev\'{e} equation we read off $2k + n = 2 \hat{k} + \hat{n}$ and $|\psi_0|=|\hat{\psi}_0|$.
Equation \eqref{asymph} permits now to conclude $2k - n = 2 \hat{k} - \hat{n}$ and we can read off $|a_k|=|\hat{a}_k|$.
The two integer equations imply $k = \hat{k}$ and $ n = \hat{n}$. Hence we know that the normalized potentials of the two entire radially symmetric minimal surfaces associated with the two solutions started from have equal powers of $z$ at corresponding entries. But it follows from Proposition \ref{normalized apsi} that after conjugation by a diagonal matrix with entries in $S^1$  and a scaling of the coordinate system
(if necessary) permit to assume without loss of generality that the coefficients in the normalized potential satisfy $a_0>0$ and $\psi_{0}<0$ in (\ref{etarad}). Therefore the two solutions come from the same normalized potential and thus the surfaces are equal. Then also the solutions of the third Painlev\'{e} equation PIII  coincide.
\end{proof}

\begin{remark}
\begin{enumerate}
\item  In general, solutions to Painlev\'{e} equations have many singularities along the positive real axis. It is a rare, but important, case, when one finds a solution without singularities.
Since our potential is holomorphic on $\C$, and since the Iwasawa decomposition is, in our case, global, the surface is defined on all of $\C$, with a branch point at $z =0$ if $k>0$.
 In view of our substitutions \eqref{substitution} it follows, that the  solution to our Painlev\'{e} equation is smooth for $s >0$ and approaches $0$ as $s$ tends to $0$ if $2k-n+1$ is positive.

\item In the proof above we have observed that the  $(11)$-entry of the leading term $V_+$ in the Iwasawa decomposition $C = F V_+$, called $e^b$ in \cite{OK},  is a complex multiple of $z^{-k} e^{u/2}$.
\end{enumerate}
\end{remark}

\section{Examples of  entire  radially symmetric minimal Lagrangian immersions into $\C P^2$}
\label{Sec:radially symmetric}

In the last section we have explained how one can construct all 
radially symmetric minimal Lagrangian surfaces which have a normalized potential which is holomorphic on an open disk $\D_r$ in $\C$ with center $z =0$.
In this section we will discuss some very special cases following the method discussed in this paper.
For more details and explanations respectively see subsection \ref{subsection:loop method}.

In both cases we start from some normalized potential $\eta$, see \eqref{normeta}.
Then we solve the ODE
$$dC(z,\lambda)  = C (z,\lambda) \eta (z,\lambda), \quad C(0,\lambda) = I.$$ 

In the next step we perform an Iwasawa decomposition
$$ \F(z,\bar z, \lambda)=C(z, \lambda)V_{+}(z,\bar{z},\lambda).$$

One frequently mentions here the \lq\lq unique Iwasawa decomposition\rq\rq, obtained by requiring that the diagonal terms of the leading coefficient of $V_+$ are positive.

For concrete computations it is, however, sometimes useful to choose another Iwasawa decomposition, namely the one, where the Maurer-Cartan form of $ \F$ has the form
(\ref{eq:mathbbF}). This one can do without loss of generality and in this subsection we will assume this choice.

The last step in the construction procedure is to choose the last column $\f$ of $\F$ and to project it to $\C P^2$. Then the resulting map $f$ is a minimal Lagrangian surface in $\C P^2$ and $\f$ is a horizontal lift of $f$  assuming that the $(13)$-entry of 
$\eta $ never vanishes. (If this entry vanishes at some point, then one can sometimes  change this by a singular gauge so that one actually even then will obtain a minimal immersion. But in this paper we have not considered singular gauges.) 


For our example the following result is of crucial importance.
\begin{theorem}\label{Thm8.1}
Let $f: \C \rightarrow \C P^2$ be a full minimal Lagrangian immersion. The normalized potential $\eta$ of  $f$ is constant  if and only if there exists 
a one-parameter subgroup $p_t \in S^1$ such that $\eta(p_t z,p_t \lambda)=\eta(z, \lambda)$.
\end{theorem}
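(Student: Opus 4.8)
The plan is to extract the transformation law of $\eta$ directly from its explicit shape. By Wu's formula (Proposition \ref{prop:Wu's formula}, formula \eqref{eq:Wu-eta}), the normalized potential of $f$ has the form $\eta(z,\lambda)=\lambda^{-1}A(z)\,dz$ with
\[
A(z)=\begin{pmatrix}0&0&ia(z)\\ ib(z)&0&0\\ 0&ia(z)&0\end{pmatrix},
\]
where $a,b$ are meromorphic on $\C$ (the entries extend meromorphically by the remark following Wu's formula). The decisive observation is that under $z\mapsto p_tz$, $\lambda\mapsto p_t\lambda$ with $p_t=e^{ip_0t}\in S^1$ a scalar, the factor $(p_t\lambda)^{-1}=p_t^{-1}\lambda^{-1}$ coming from $\lambda^{-1}$ cancels the factor $p_t$ coming from $d(p_tz)=p_t\,dz$, so that
\[
\eta(p_tz,p_t\lambda)=p_t^{-1}\lambda^{-1}A(p_tz)\,p_t\,dz=\lambda^{-1}A(p_tz)\,dz .
\]
Consequently the symmetry $\eta(p_tz,p_t\lambda)=\eta(z,\lambda)$ is \emph{equivalent} to $A(p_tz)=A(z)$ for all $t$, i.e. to $a(p_tz)=a(z)$ and $b(p_tz)=b(z)$.

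The forward implication is then immediate: if $\eta$ is constant, then $a$ and $b$ are constants, so $A(p_tz)=A(z)$ holds trivially for \emph{every} scalar $p_t$, and choosing the nontrivial subgroup $p_t=e^{it}$ exhibits the required one-parameter group in $S^1$.

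For the converse I would assume a \emph{nontrivial} one-parameter subgroup $p_t=e^{ip_0t}$, $p_0\neq 0$, satisfies the symmetry; this nontriviality must be part of the reading of the hypothesis, since for $p_0=0$ the condition is vacuous and would make the statement false. By the equivalence above, $a$ and $b$ are invariant under multiplication by every element of the image $\{e^{ip_0t}:t\in\R\}$, which equals all of $S^1$ because $p_0\neq 0$. A meromorphic function on $\C$ invariant under all rotations is constant: writing $a(z)=\sum_m c_mz^m$ as a Laurent series, invariance gives $c_m(w^m-1)=0$ for all $w\in S^1$, whence $c_m=0$ for $m\neq 0$, and likewise for $b$. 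Therefore $A$ is constant and so is $\eta$.

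The only delicate point is this last step, ``rotation invariance $\Rightarrow$ constant'', hinging on the nontriviality of $p_t$ and on the surjectivity of a nonconstant one-parameter subgroup of $S^1$; both are dispatched by the Laurent expansion. As an alternative packaging of the converse, note that $\eta(p_tz,p_t\lambda)=\eta(z,\lambda)$ is precisely the homogeneity condition \eqref{genequ2} with $q_t=p_t$ and $T=I$; the relations of the radial-symmetry theorem of Section \ref{Sec:entireradiallysym} then force $q_0=\tfrac13(2k+n+3)p_0=p_0$ and $t_0=\tfrac13(k-n)p_0=0$, i.e. $2k+n=0$ and $k=n$, hence $k=n=0$, which collapses the normal form \eqref{etarad} to a constant matrix.
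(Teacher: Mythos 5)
Your proof is correct, and its main line of argument is genuinely more direct than the one in the paper. The paper proves both implications by routing everything through the homogeneity machinery of Section \ref{Sec:entireradiallysym}: it reads your hypothesis as the homogeneity condition \eqref{genequ2} with $q_t=p_t$ and $T=I$, deduces from $t_0=\tfrac13(k-n)p_0=0$ that $k=n$, and then invokes Lemma \ref{lemma7.4} (the surface is an immersion at $z=0$ iff $k=0$) to conclude $k=n=0$; the forward direction is argued by running the same machinery backwards. Your primary argument instead observes that the $p_t$ coming from $d(p_tz)$ cancels the $p_t^{-1}$ coming from $(p_t\lambda)^{-1}$, so the hypothesis is literally rotation-invariance of the coefficient matrix $A(z)$, and a Laurent expansion kills every nonzero mode. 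This buys two things: it is self-contained (no appeal to the radial-symmetry theorem or to Lemma \ref{lemma7.4}), and it shows the converse does not actually need the immersion property at the origin, whereas the paper's version does use it (though, as your ``alternative packaging'' implicitly shows, even there the two relations $2k+n=0$ and $k=n$ already force $k=n=0$ without Lemma \ref{lemma7.4}). Your explicit flagging of the nontriviality of the one-parameter subgroup is also a point the paper leaves implicit; it is indeed needed, since the trivial subgroup satisfies the condition vacuously for any $\eta$. The only mild caveat is that you should make sure the rotation-invariance argument also rules out poles away from the origin (a rotation-invariant set of poles of a meromorphic function on $\C$ would be non-discrete unless empty), after which the Laurent expansion on $\C^*$ finishes the job exactly as you wrote.
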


\begin{proof}
For a constant normalized potential $\eta=\lambda^{-1}A dz$ with constant $A$, there exists one-parameter group $p_t=\exp(itp_0)$ and $q_t=\exp(itq_0)$ such $\eta(p_t z, q_t\lambda)=\eta(z,\lambda)$ with $k=n$ (and $T=I$). It follows that $f$ is an immersion that $k=0$. Hence
we have 
$$\eta(p_t z, p_t\lambda)=\eta(z,\lambda)$$
for any one-parameter group $p_t=\exp(itc)$ with an arbitrary real number $c$.

Conversely, suppose that the normalized potential of a minimal Lagrangian immersion $f:\C \rightarrow \C P^2$ satisfies 
 $\eta(p_t z, p_t \lambda)=\eta(z,\lambda)$
 for $p_t=\exp(itc)$ with a real number $c$.

This implies that $\eta$ satisfies the homogeneity condition with $T=I$, hence $\eta$ has of form \eqref{etarad} with $k=n$. 
Since $f$ is an immersion on all of $\C$, it follows from Lemma \ref{lemma7.4} that 
$k=0$.  
Thus the normalized potential $\eta=\lambda^{-1}Adz$ is  constant.
\end{proof}

\begin{remark}
In \cite{Bobenko-Its} an initial value problem for $dC = C \eta$  was considered, where not only $I$ as initial condition is used, but any positive constant diagonal matrix. In our case,
we can always chose without loss of generality the matrix $I$ as initial condition.
In fact, for a given \emph{positive diagonal matrix} $A$, $C(z,\lambda)$ is a solution to $dC = C \eta$, $C(0,\lambda) = A$ if and only if $C_A=ACA^{-1}$ is a solution
to $dC_A = C_A \eta_A$,   $C_A(0,\lambda) = I$
for the potential $\eta_A = A \eta A^{-1}$.
\end{remark}

\subsection{Implications for metric and cubic form}

\begin{lemma}\label{lemma8.2}
If the normalized potential of a full, minimal Lagrangian immersion $f: \C \rightarrow \C P^2$ satisfies 
 $\eta (p_t z, p_t \lambda)=\eta (z, \lambda)$ for a one-parameter group $p_t\in S^1$, then 
$\psi$ is constant and the metric only depends on $r$. 
\end{lemma}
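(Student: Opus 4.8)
The plan is to reduce the statement to the two structural results already established: Theorem \ref{Thm8.1}, which characterizes constant normalized potentials, and the earlier theorem asserting that any normalized potential satisfying the homogeneity condition \eqref{genequ2} produces a metric depending only on $r$ together with a cubic form of the shape $\psi(z) = \psi_0 z^{2k+n}$. Indeed, the hypothesis $\eta(p_t z, p_t \lambda) = \eta(z,\lambda)$ for a nontrivial one-parameter group $p_t \in S^1$ is exactly the right-hand side of the equivalence in Theorem \ref{Thm8.1}, so that theorem immediately yields that $\eta$ is constant, $\eta = \lambda^{-1} A\, dz$ with $A$ a fixed matrix. In the notation of \eqref{etarad} this forces the two entry-powers to vanish, $k = n = 0$, since a nonconstant monomial $a_k z^k$ (respectively $b_n z^n$) cannot be constant.

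Next I would observe that the invariance $\eta(p_t z, p_t \lambda) = \eta(z,\lambda)$ is precisely the homogeneity condition \eqref{genequ2} in the special case $q_t = p_t$ and $T = I$. Since $f$ is an entire minimal Lagrangian immersion, the earlier theorem on the homogeneity condition applies and delivers at once that the induced metric depends only on $r$ and that $\psi(z) = \psi_0 z^{l}$ with $l = 2k+n$. Feeding in $k = n = 0$ from the previous step gives $l = 0$, whence $\psi \equiv \psi_0$ is constant, which is the assertion.

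Should one prefer a self-contained argument, I would instead work directly with the frame. Solving $dC = C\eta$, $C(0,\lambda) = I$ for the constant potential gives $C(z,\lambda) = \exp(z\lambda^{-1}A)$, which manifestly satisfies $C(p_t z, p_t\lambda) = C(z,\lambda)$; by uniqueness of the Iwasawa splitting this invariance passes to the extended frame, $\mathbb{F}(p_t z, p_t\lambda) = \mathbb{F}(z,\lambda)$. Comparing the $(13)$- and $(21)$-entries of the Maurer-Cartan form $\mathbb{F}^{-1}\mathbb{F}_z$ in \eqref{eq:mathbbF} after the combined rotation, one finds $u(p_t z) = u(z)$ and, using this, $\psi(p_t z) = \psi(z)$ for all $t$. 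Rotation-invariance together with holomorphicity of $\psi$ then forces $\psi$ to be constant, while $u$ depends only on $r = |z|$.

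The argument carries no serious obstacle; the only point demanding care is the bookkeeping in either route. In the first route it is the verification that a constant potential genuinely sits inside the homogeneity framework with $k = n = 0$. In the second it is tracking the cancellation of the factor $p_t$ produced by the chain rule against the simultaneous rotation $\lambda \mapsto p_t\lambda$, so that the invariance of $\mathbb{F}$ really descends to invariance of the metric coefficient $e^{u}$ (and of $\psi$) rather than to a twisted version of it.
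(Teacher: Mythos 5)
Your proposal is correct, and your second (``self-contained'') route is essentially the paper's own proof: the paper likewise invokes Theorem \ref{Thm8.1} to get constancy of $\eta$, deduces the frame invariance $\F(p_t z, p_t\lambda)=\F(z,\lambda)$, and reads off $\psi(p_tz)=\psi(z)$ and $u(p_tz)=u(z)$ from the invariant Maurer--Cartan form, concluding by holomorphicity of $\psi$. Your first route, citing the earlier homogeneity-condition theorem with $q_t=p_t$, $T=I$, $k=n=0$, is only a cosmetic repackaging of the same facts.
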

\begin{proof}
It follows from Theorem \ref{Thm8.1} that the normalized potential of $f$ is constant. 
This implies
\begin{equation} \label{genequiframe}
\F(p_t z, p_t \lambda) = \F(z,\lambda) 
\end{equation}
for the frame of the minimal Lagrangian immersion associated to $\eta$.

The invariance  (\ref{genequiframe})  of $\mathbb{F}$ implies immediately that the Maurer-Cartan form $\alpha=\mathbb{F}^{-1}d\mathbb{F}$ satisfies
\begin{equation}
\alpha(p_t z, p_t \lambda) = \alpha(z,\lambda).
\end{equation}
From this one reads off (again) that $\psi$ is constant, but also that 
the metric factor $2e^{u}$ of $f$ only depends 
on $r^2 = z \bar{z}$.
Explicitly, it follows from 
$\lambda^{-3}\psi(z) dz^3=(p_t \lambda)^{-3} \psi(p_t z) d(p_t z)^3,$
that $\psi(p_t z)=\psi(z)$
holds.
Since $\psi(z)$ is holomorphic in $z$, $\psi(z)$ is constant.
Similarly, 
$e^{u(p_t z)}=\F_{z}(p_t z) \overline{\F_z(p_t z)}=e^{u(z)}$
implies $u(p_t z)=u(z)$. Hence $u$ only depends on the radius $r^2= z \bar{z}$.
\end{proof}

\begin{remark} The conditions  $\eta (p_t z, p_t \lambda)=\eta (z, \lambda)$,  $C (p_t z, p_t \lambda)=C (z, \lambda)$,  $\F (p_t z, p_t \lambda)=\F (z, \lambda)$ and  $f (p_t z, p_t \lambda)=f (z, \lambda)$  are equivalent to each other.
Moreover, these conditions imply $f(\lambda^{-1}z,1) = f(z, \lambda)$. Thus the surface 
has the same image for all $\lambda$, but a different parametrization.
\end{remark}

\begin{remark}
We will state explicitly the Painleve III equation and its asymptotics at $0$  in equations
\eqref{PIII8} and \eqref{PIIIasymp8} below.
\end{remark}

We have started with a radially symmetric minimal Lagrangian immersion satisfying the 
special condition
\begin{equation} \label{spacialmLi}
f(e^{ip_0t} z,e^{ip_ot} \lambda)=f(z, \lambda). 
\end{equation}

In this case we also know
\begin{equation} \label{specialmLi8}
u(z, \bar{z}) = u(r).  
\end{equation}

In \cite{FrWe} the authors have considered surfaces in $\R^3$ which are
intrinsically surfaces of revolution. By definition this means that the induced metric is conformal with metric factor only depending on $r$.
Under the additional assumption that 
\begin{itemize}
\item[a)] the principal curvatures only depend on the radius,

\item[b)] the principal curvature directions only depend on the angle of rotation (but  the radius),
\end{itemize}
they proved that these surfaces have constant mean curvature and are Smyth surfaces
(for a precise formulation see loc.cit Theorem 1.3).
In the minimal surface case these surfaces are Enneper type surfaces (see loc.cit Theorem 1.5).
Smyth had assumed radially symmetric and constant mean curvature. In this regard the paper \cite{FrWe} makes an additional statement about the minimal case.

In our case one can consider the special additional assumptions:

i) $\psi = 0$,

ii)  $\psi  \neq 0$ but $a =  b = 1$ and $\psi_0 = 1$,  

or,

iii) $\psi \neq 0$ and $|a|\neq |b|$.

We will show below how we can describe these surfaces in our formalism.

\subsection{Case $\psi\equiv 0$} 

In this case the Maurer-Cartan form of the extend frame $\F(z,\bar{z},\lambda)$ is 
$\alpha=\alpha^{\prime}dz+\alpha^{\prime\prime}d\bar{z}$ given by \eqref{eq:mathbbF} with $\psi\equiv 0$.
Assume $\F(z,\bar{z},\lambda)=C(z,\lambda)V_{+}(z,\bar{z},\lambda)$ is the Iwasawa decomposition.
Then $\alpha^{\prime\prime}=\F^{-1}\partial_{\bar z}\F=V_{+}^{-1}\partial_{\bar z}V_{+}$.
Considering $z$ as a parameter, we need to solve the following $\bar\partial$-problem:
\begin{equation*}
V_{+}^{-1}\partial_{\bar z}V_{+}=\alpha^{\prime\prime},\quad
V_{+}(z,0,\lambda)=I.
\end{equation*}
Performing the following gauge transformation
$$\hat{V}_{+}=V_{+}{\mathrm{diag}}(e^{\frac{u(z,\bar{z})}{2}-\frac{u(z,0)}{2}}, e^{\frac{-u(z,\bar{z})}{2}-\frac{u(z,0)}{2}}, 1)=:V_{+}D_1,$$
we obtain
\begin{equation*}
\hat{V}_+^{-1}\partial_{\bar z}\hat{V}_{+}=i\lambda e^{u(z,\bar{z})-\frac{u(z,0)}{2}} N_{+},
\end{equation*}
where $N_{+}=\begin{pmatrix}0&0&0\\0&0&1\\1&0&0\end{pmatrix}$ 
is nilpotent.
Thus 
$$\hat{V}_+(z,\bar{z},\lambda)=\exp(\lambda h_+(z,\bar{z})N_+)$$
with $h_+(z,\bar{z})=i\int_0^{\bar z} e^{u(\xi,\bar{\xi})-\frac{u(\xi,0)}{2}}d\bar{\xi}$
and $V_+=\hat{V}_+ D_1^{-1}$.

Noticing that $V_{+}(z,0,\lambda)=I$, we conclude $\F(z,0,\lambda)=C(z,\lambda)$, thus 
\begin{equation}\label{eq:Iwasawa}
\F (z,\bar{z},\lambda)=\F(z,0,\lambda)V_+(z,\bar{z},\lambda).
\end{equation}

Similarly, consider 
$$\F_{-}(z,\lambda)=\F(z,0,\lambda) {\mathrm{diag}}(e^{-\frac{u(z,0)}{2}+\frac{u(0,0)}{2}}, e^{\frac{u(z,0)}{2}-\frac{u(0,0)}{2}}, 1)=:
\F(z,0,\lambda) D_2.$$
Then 
$$\F_-^{-1}\partial_z \F_-=\lambda^{-1}ie^{u(z,0)-\frac{u(0,0)}{2}}N_-,$$
where $N_-=\begin{pmatrix}0&0&1\\0&0&0\\0&1&0
\end{pmatrix}$ is nilpotent.
Thus 
$$\F_-(z,\lambda)=\exp(\lambda^{-1} h_-(z)N_-)$$
with $h_-(z)=i\int_0^{z} e^{u(\xi,0)-\frac{u(0,0)}{2}}d\xi=a z$, $a=i e^{\frac{u(0,0)}{2}}$,
and $\F(z,0,\lambda)=\F_-(z,\lambda) D_2^{-1}$.
Now the Iwasawa decomposition follows.

One can also observe that the extended frame $\F(z,\bar{z},\lambda)$ is of the form
 $$\F(z,\bar{z},\lambda)=\exp(\lambda^{-1}h_- N_-) D \exp(\lambda h_+ N_+), $$
 where $D={\mathrm{diag}}(w,w^{-1},1)$ with $w>0$.
Since $\exp(h_- N_-)$ is the meromorphic extended frame, we see that $h_-(z)=az$ with $a=ie^{\frac{u(0,0)}{2}}$.
Also, 
$${\mathrm{diag}(\lambda,\lambda^{-1},1)}\F(z,\bar{z},\lambda){\mathrm{diag}(\lambda^{-1},\lambda,1)}
=\exp(h_- N_-) D\exp(h_+ N_+),$$
we can just derive $\F(z,\bar{z})$ for $\lambda=1$.
Thus
$$\F(z,\bar{z})=\begin{pmatrix}
1&-\frac{a^2 z^2}{2}&az\\0&1&0\\0&az&1
\end{pmatrix}
\begin{pmatrix}w&&\\ &w^{-1}&\\ &&1\end{pmatrix}
\begin{pmatrix}1&0&0\\
\frac{h_+^2}{2}&1&h_+\\
h_+&0&1
\end{pmatrix}.$$
From $\F\in SU(3)$ we obtain 
$$w=1+\frac{|az|^2}{2}, \quad h_+=a\bar{z},$$ 
and the immersion is given by
$$f(z,\bar{z})=[(\frac{az}{1+\frac{|az|^2}{2}}, \frac{a\bar{z}}{1+\frac{|az|^2}{2}}, \frac{1-\frac{|az|^2}{2}}{1+\frac{|az|^2}{2}})].$$
Since $a$ is purely imaginary and 
$$\begin{pmatrix}
-\frac{i}{\sqrt{2}}&-\frac{i}{\sqrt{2}}&0\\
\frac{1}{\sqrt{2}}&\frac{1}{\sqrt{2}}&0\\
0&0&1
\end{pmatrix}
f(z,\bar{z})=\frac{1}{1+\frac{|az|^2}{2}}
\begin{pmatrix}
|a|\frac{z+\bar{z}}{\sqrt{2}}\\
i|a|\frac{\bar{z}-z}{\sqrt{2}}\\
1-\frac{|az|^2}{2}
\end{pmatrix},$$
the right side gives a part of surface $\R P^2$ in $\C P^2$ given by $(x_1, x_2, x_3)\in S^2$ to $[(x_1, x_2, x_3)]\in \C P^2$.
Hence $f(z,\bar{z})$ is an open subset of the surface $\R P^2$ in $\C P^2$ up to a $SU(3)$ transformation and coordinate change.
By the compactness of $SU(3)$, this holds globally. 
So Iwasawa decomposition gives rise to the embedding of $\mathbb{R}P^2$ in this case.
In addition we see that the normalized potential  is given by
$$\lambda^{-1} \begin{pmatrix}
0&0& a\\
0&0&0\\
0& a&0
\end{pmatrix} dz.$$

\subsection{Case $\psi \neq 0$, $a = b=1$}
We have seen in Section \ref{sec:vacuum} that this case  leads quite directly to a Clifford torus.
Hence $\psi\neq 0$ is a constant, $u$ is constant, and
$$\alpha=\lambda^{-1}Udz+\lambda V d\bar{z}=\lambda^{-1}\begin{pmatrix}
0&0&i\\
i&0&0\\
0&i&0
\end{pmatrix}
dz+
\begin{pmatrix}
0&i&0\\
0&0&i\\
i&0&0
\end{pmatrix}
d\bar{z},$$
with $[U,V]=0$.
Thus $\mathbb{F}=\exp(\lambda^{-1}Uz+\lambda V \bar{z})$.

\subsection{Case $\psi \neq 0$ and    $|a| \neq |b|$:}

In this last case we know the form of the normalized potential and of the Maurer-Cartan form of the extended frame associated with $\eta$.
We thus consider a  constant normalized potential $\eta$, defined on $\mathbb{C}$, of the form
\begin{equation} \label{etarad8}
\eta (z,\lambda) =  \lambda^{-1}\begin{pmatrix}
0&0&i a \\ 
i b&0&0\\
0&i a&0
\end{pmatrix}dz,
\end{equation}
where we can assume by Lemma \ref{normalized apsi} without loss of generality  $a > 0$ and $b  = -a^{-2} \psi >0$. Moreover, $|a| \neq |b|$.

From Theorem \ref{Thm8.1} and Lemma \ref{lemma8.2} we know that the metric of the corresponding entire radially symmetric minimal Lagrangian surface only depends on $r$ and the cubic form $\psi(z) dz^3 $
is constant.

We recall that considering  the function $h(s) = e^{u(r(s))}s^{\frac{1}{3}}$ with $s =  r^{\frac{3}{2}}$,
we also have  $h(s(r)) = e^{u(r)} r^{\frac{1}{2}}$. Hence 
\begin{equation} \label{omegafromh8}
u(r) = \log h(s(r)) - \frac{1}{2} \log r
\end{equation}
and $h$ satisfies
\begin{equation} \label{PIII8}
 \ddot h = \frac{(\dot h)^2}{h} - \frac{\dot{h}}{s} - \frac{16}{9} \frac{h^2}{ s}
+  \frac{16 |\psi|^2}{9} 
\frac{1}{h},
\end{equation}
with $\psi < 0$.

 Moreover, $h$  has for $s \rightarrow 0$ the asymptotic behaviour
\begin{equation} \label{PIIIasymp8}
\log (h(s) )  \approx    \frac{ 1}{3} \log s + 2\log |a|+o(s). 
\end{equation}

Finally, since $|a| \neq |b|$, Section \ref{sec:vacuum} implies that $u$ is not constant.
 
 For the treatment of the case under consideration it will be convenient to consider the frame equations directly:
\begin{equation*}
\begin{split}
\mathbb{F}^{-1}\mathbb{F}_z&=
\frac{1}{\lambda}\left(
   \begin{array}{ccc}
     0 & 0 & i e^{\frac{u}{2}} \\
  -i \psi e^{-u}   & 0 & 0 \\
     0 &  i e^{\frac{u}{2}} & 0 \\
   \end{array}
 \right)+\left(
   \begin{array}{ccc}
   \frac{u_z}{2}  & &  \\
      & -\frac{u_z}{2} &  \\
      & & 0 \\
   \end{array}
 \right)\\
 &:=\lambda^{-1}U_{-1}+U_0,\\
\mathbb{F}^{-1}\mathbb{F}_{\bar{z}} &=\lambda \left(
   \begin{array}{ccc}
     0 &  -i\bar{\psi} e^{-u} & 0 \\
     0& 0   & i e^{\frac{u}{2}} \\
     i e^{\frac{u}{2}} &0 & 0 \\
   \end{array}
 \right)+\left(
   \begin{array}{ccc}
     -\frac{u_{\bar z}}{2} & &  \\
      & \frac{u_{\bar z}}{2}  & \\
     & & 0 \\
   \end{array}
 \right)\\
  &:=\lambda V_{1}+V_0.
\end{split}
\end{equation*}

We note that $\psi$ is any  negative constant. 
We also know that $e^{\frac{u}{2}}$ only depends on $r$. Since this function also is real analytic, we can write $e^{\frac{u (z, \bar z)}{2}} = q(z \bar{z}) = q(r^2),$ where $q$ is an entire function.
As a consequence, $u_z(z, \bar z) = \partial_z u(z, \bar z) = 2 q (z\bar{z})^{-1}\partial_z q(z \bar{z}) = 
2 q(r^2)^{-1}(\partial_r q)(r^2) \bar{z}$.

Next we rewrite the frame equations above in terms of polar coordinates.
Then
\begin{align*}
\F^{-1} d\F &= (\lambda^{-1}U_{-1}+U_0)dz + \lambda V_{1}+V_0) d \bar{z} \\
&=((\lambda^{-1}U_{-1}+U_0) e^{i \theta} + (\lambda V_{1}+V_0) e^{- i \theta}) dr
+ Yd\theta.
\end{align*}

In the case under consideration we know 
$$\F(e^{i \theta}r, \overline{e^{i \theta}r}, \lambda) = 
\F(r,r,e^{-i\theta}\lambda).$$

As a consequence, $\F_0(z,\lambda) : = \F(z, \bar z, \lambda)$  satisfies
$\F_0(e^{i \theta}r, \lambda) = \F_0(r, e^{-i\theta} \lambda).$
Putting $\mu =  e^{-i\theta} \lambda$, we observe
\begin{equation*}
\F_0(r, \mu)^{-1} \partial_r \F_0(r, \mu) = ((\mu^{-1}U_{-1}+ e^{i\theta}U_0) + (\mu V_{1}+e^{-i\theta}V_0)) 
\end{equation*}
and we also know $\F_0(0,\mu) = I$.
We would like to point out that the latter condition makes sense in our case, since the function $q$ is entire.

Finally, using $u_z(z, \bar z) = 2 q(r^2)^{-1}(\partial_r q)(r^2) \bar{z}$, it is straightforward to  verify that
$e^{i\theta}U_0 + e^{-i\theta}V_0 = 0$ holds.
 
 \begin{theorem}
 Let $\eta$ be a constant potential of the form \eqref{etarad8} with $|a| \neq |b|$ and $\psi \neq 0$. Then the metric is not constant.  Moreover, the function $h(s)$ defined in 
 \eqref{omegafromh8} is not a multiple of $s^{\frac{1}{3}}$ and satisfies \eqref{PIII8} and  \eqref{PIIIasymp8}.

 Let $\F$ be the  extended frame of the associated minimal Lagrangian surface $f$
 satisfying $\F(0,0,\lambda) = I$.
 Then with the notation introduced just above we have  $\F_0(e^{i \theta}r, \lambda) = \F_0(r, e^{-i\theta} \lambda)$ and $\F_0$ satisfies the ODE
 \begin{equation}
 F_0(r, \mu)^{-1} \partial_r \F_0(r, \mu) = (\mu^{-1}U_{-1} + \mu V_{1}) dr
 \end{equation}
 with initial condition $\F_0(0,\mu) = I$.
 
 Conversely, choose any solution to  (\ref{PIII8}) and  (\ref{PIIIasymp8}) which is not a multiple of $s^{\frac{1}{3}}$. 
 Define $u$ by  (\ref{omegafromh8}).
 Then $e^{u}$ is real analytic and is of the form $e^{u (z, \bar z)} = q(r^2)$, where $q$ is entire.
 Next form the matrices $U_{-1}$ and $V_1$ as above and let $\mathbb{H}_0$ denote the solution to 
 $\mathbb{H}_0(r, \mu)^{-1} \partial_r \mathbb{H}_0(r, \mu) = (\mu^{-1}U_{-1} + \mu V_{1}) dr$ with initial condition  $\mathbb{H}_0(0, \mu) = I$.
 
 On the other hand, form the differential one-form 
$\F^{-1} d\F = (\lambda^{-1}U_{-1}+U_0)dz + (\lambda V_{1}+V_0) d \bar{z} $.
Then this PDE is solvable, since $h$ solves PIII. Moreover, 
the solution to this PDE with initial condition $\F(0,0,\lambda)=I$ is the extended frame of some minimal Lagrangian surface.
Moreover, $\F(r,r,\lambda) = \mathbb{H}_0(r,\lambda)$ and 
$\F(z, \bar z, \lambda) = \mathbb{H}_0( r, e^{-i\theta}\lambda)$.
 
 \end{theorem}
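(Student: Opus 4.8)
The plan is to handle the two directions separately; since nearly all the analytic ingredients have already been extracted in the discussion preceding the statement, the proof mostly amounts to organizing those facts and supplying the uniqueness arguments that link the frame equations on the ray $\theta=0$ to the radial ODE for $\mathbb{H}_0$.

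For the forward direction I would first invoke Theorem~\ref{Thm8.1} and Lemma~\ref{lemma8.2}: since $\eta$ in \eqref{etarad8} is constant, the metric depends only on $r$ and the cubic form is constant, so the $k=n=0$ specialization of the Painlev\'e theorem shows that $h(s)=e^{u(r(s))}s^{1/3}$ solves \eqref{PIII8} with the asymptotics \eqref{PIIIasymp8}. Because $h=e^{u(r)}s^{1/3}$, the statement that $h$ is not a constant multiple of $s^{1/3}$ is equivalent to $u$ being nonconstant, which follows from $|a|\neq|b|$ via Section~\ref{sec:vacuum}: a constant $u$ would make $\eta$ a vacuum and force $|a|=|b|$. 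For the equivariance I would start from the invariance $\mathbb{F}(p_tz,p_t\lambda)=\mathbb{F}(z,\lambda)$ recorded in \eqref{genequiframe} (here $T=I$ and $q_t=p_t$ since $k=n=0$); evaluating at $z=r>0$ with $p_t=e^{i\theta}$ and replacing $\lambda$ by $e^{-i\theta}\lambda$ gives exactly $\mathbb{F}_0(e^{i\theta}r,\lambda)=\mathbb{F}_0(r,e^{-i\theta}\lambda)$. The radial ODE is then read off from the polar form of \eqref{eq:mathbbF} once one checks the identity $e^{i\theta}U_0+e^{-i\theta}V_0=0$; this is immediate from $u_z=U'(r^2)\bar z$ and $u_{\bar z}=U'(r^2)z$, since $e^{i\theta}u_z=e^{-i\theta}u_{\bar z}=U'(r^2)r$ and these equal diagonal entries cancel. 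What remains in the $dr$-coefficient after setting $\mu=e^{-i\theta}\lambda$ is precisely $\mu^{-1}U_{-1}+\mu V_1$, with $\mathbb{F}_0(0,\mu)=I$.

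For the converse I am handed a solution $h$ of \eqref{PIII8}--\eqref{PIIIasymp8} that is not a multiple of $s^{1/3}$, and I define $u$ by \eqref{omegafromh8}. The crux is to prove that $e^u$ is real analytic on $\C$ and of the form $q(r^2)$ with $q$ entire. I would establish this by a round trip: reading the constant $|a|$ off the asymptotic term in \eqref{PIIIasymp8} and $\psi_0<0$ off the coefficient in \eqref{PIII8}, and normalizing as in Lemma~\ref{normalized apsi}, I form the constant potential \eqref{etarad8} with these data and run the forward construction. By the forward construction (through the global Iwasawa decomposition of the entire potential and the relation \eqref{eq:7.21}, which expresses $e^{u/2}$ via the real-analytic leading Iwasawa coefficient $v_0$) its metric $e^{\hat u}$ is real analytic of the form $q(r^2)$ with $q$ entire and produces a Painlev\'e solution $\hat h$ with the same asymptotics \eqref{PIIIasymp8}. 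The uniqueness half of the Painlev\'e correspondence of Section~\ref{Sec:entireradiallysym} then gives $\hat h=h$, hence $\hat u=u$ and the desired analyticity. With $e^u=q(r^2)$ in hand I form $U_{-1},V_1$, let $\mathbb{H}_0$ solve the stated radial ODE, and note that solvability of the full system $\mathbb{F}^{-1}d\mathbb{F}=(\lambda^{-1}U_{-1}+U_0)dz+(\lambda V_1+V_0)d\bar z$ is exactly its Maurer--Cartan integrability, which holds because $h$ solves \eqref{PIII8}, equivalent to the Tzitzeica equation \eqref{Tzitzeicapolar} for the radial $u$; Proposition~\ref{prop:frame} then identifies the solution $\mathbb{F}$ with $\mathbb{F}(0,0,\lambda)=I$ as the extended frame of a minimal Lagrangian surface.

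Finally I would pin down the two identities. Along $\theta=0$ one has $u_z=u_{\bar z}$, so $U_0+V_0=0$ and the $dr$-equation for $\mathbb{F}(r,r,\lambda)$ collapses to $\lambda^{-1}U_{-1}+\lambda V_1$, which is the $\mathbb{H}_0$-equation at $\mu=\lambda$; since both solutions take the value $I$ at $r=0$, uniqueness of the linear ODE yields $\mathbb{F}(r,r,\lambda)=\mathbb{H}_0(r,\lambda)$. The equivariance $\mathbb{F}_0(e^{i\theta}r,\lambda)=\mathbb{F}_0(r,e^{-i\theta}\lambda)$ is re-derived on the converse side by observing that the explicit $\alpha=\mathbb{F}^{-1}d\mathbb{F}$ built from the radial $u$ and constant $\psi$ satisfies $\alpha(p_tz,p_t\lambda)=\alpha(z,\lambda)$, so $\mathbb{F}(e^{i\phi}z,\overline{e^{i\phi}z},e^{i\phi}\lambda)$ and $\mathbb{F}(z,\bar z,\lambda)$ solve the same PDE with the same value at the origin and therefore agree; combining this with $\mathbb{F}(r,r,\lambda)=\mathbb{H}_0(r,\lambda)$ gives $\mathbb{F}(z,\bar z,\lambda)=\mathbb{H}_0(r,e^{-i\theta}\lambda)$. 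The hardest part is the analyticity of $e^u$ at the origin: the round-trip argument reduces it to the already-proven uniqueness of Painlev\'e solutions with prescribed asymptotics, but one must check carefully that the data $|a|$ and $\psi_0$ read off from $h$ reproduce exactly the normalizations of Lemma~\ref{normalized apsi}, so that the comparison surface carries identical asymptotic data.
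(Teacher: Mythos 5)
Your proposal is correct, and on the forward direction it follows the paper's route exactly: the paper's own proof is the single observation that $\F(r,r,\mu)$ solves the radial ODE with initial condition $I$, with everything else (radial metric and constant cubic form via Theorem \ref{Thm8.1} and Lemma \ref{lemma8.2}, the cancellation $e^{i\theta}U_0+e^{-i\theta}V_0=0$, and the equivariance $\F_0(e^{i\theta}r,\lambda)=\F_0(r,e^{-i\theta}\lambda)$ derived from \eqref{genequiframe}) delegated to the discussion preceding the statement, which you simply reassemble in the same order. Where you genuinely go beyond the paper is the converse: the paper's proof says nothing about why an arbitrary solution of \eqref{PIII8} with asymptotics \eqref{PIIIasymp8} yields, via \eqref{omegafromh8}, a metric $e^{u}=q(r^2)$ with $q$ entire, whereas your round-trip argument (read off $|a|$ and $\psi_0$ from the data, build the constant potential \eqref{etarad8}, run the forward construction to get an analytic $\hat u$ via \eqref{eq:7.21}, and identify $\hat h=h$ by the uniqueness theorem of Section \ref{Sec:entireradiallysym}) actually supplies the missing step. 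The one caveat is that the paper's proof of that uniqueness theorem implicitly assumes both solutions arise from surfaces, so your argument inherits whatever circularity is present there; since you invoke the theorem as stated, this is a weakness of the paper rather than of your proof, but it is the load-bearing joint of your converse and worth flagging. The remaining identifications ($\F(r,r,\lambda)=\mathbb{H}_0(r,\lambda)$ by uniqueness of the linear ODE along $\theta=0$, and $\F(z,\bar z,\lambda)=\mathbb{H}_0(r,e^{-i\theta}\lambda)$ from invariance of the Maurer--Cartan form) are exactly the paper's intended argument, correctly executed.
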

 \begin{proof}
 It suffices to observe that $\F(r,r,\mu)$ solves $ F_0(r, \mu)^{-1} \partial_r \F_0(r, \mu) = (\mu^{-1}U_{-1} + \mu V_{1}) dr$ with initial condition $\F(0,0,\mu) = I$.
 \end{proof}

Remark. (1) The frame $\F$ above yields the minimal Lagrangian surface $f = [\F.e_3]$
in $\C P^2$. Our description hence is in some sense explicit.

(2) If $h$ is a multiple of $s^{\frac{1}{3}}$, then the metric function $e^u$ is constant which implies $|a| = |b|$, as can be checked directly using \eqref{PIII8}.

Substituting $h=const \cdot s^{\frac{1}{3}}$ into \eqref{PIII8}, we see that it is a solution iff  $|\psi|=1$. From \eqref{PIIIasymp8}, the constant is $|a|^2$. Then by the assumption $h=e^u s^{\frac{1}{3}}$, we see that $e^{u}=|a|^2$. Moreover, with our assumption $a>0$ and $\psi<0$, $b=-a^{-2}\psi>0$ we get $\psi=-1$, $b=a^{-2}$. Now the equation $u_{z\bar z}=e^{-2u}|\psi|^2-e^{u}$ leads to $0=a^{-4}-a^2$ and $a=1$. Hence $b=a=1$.

(3) The solutions to entire minimal Lagrangian surfaces discussed in the 
theorem above have not been investigated so far in the sense of which are elliptic or otherwise known.
It would be interesting to understand, where in the large family of \emph{special functions} these
solutions fit in.

\begin{acknow}
This work was done mostly during the first named author's visits at Tsinghua University
and the second named author's visit at the Technical University of Munich.
The authors are grateful to both institutions for their generous support and hospitality.
The authors also would like to thank Martin Guest and Robert Conte for  helpful discussions.
The second author was supported by NSFC grants
(Grant No.~11831005, No.~11961131001 and No.~11671223).
\end{acknow}


\end{document}